\newcommand{\Dh}{\Delta_h}
\newcommand{\nabh}{\nabla_{\! h}}
\newcommand{\hf}{\nicefrac{1}{2}}
\newcommand{\nrm}[1]{\left\| #1 \right\|}
\newcommand{\ciptwo}[2]{\left\langle #1 , #2 \right\rangle_\Omega}
\newcommand{\cipgen}[3]{\left\langle #1 , #2 \right\rangle_{#3}}
\newcommand{\nn}{\nonumber}
\newcommand\dt {{\Delta t}}
\newcommand{\eipx}[2]{\left[ #1 , #2 \right]_{\rm x}}
\newcommand{\eipy}[2]{\left[ #1 , #2 \right]_{\rm y}}
\newcommand{\eipvec}[2]{\left[ #1 , #2 \right]_{\Omega}}
\newcommand{\sumij}{\sum_{i,j=1}^N}
\begin{document}
\title{A positivity-preserving second-order BDF scheme for the Cahn-Hilliard equation with variable interfacial parameters}


 \author[Lixiu Dong et.~al.]{Lixiu Dong\affil{1}, Cheng Wang\affil{3},
       Hui Zhang\affil{2}\comma\corrauth, and Zhengru Zhang\affil{2}}
 \address{\affilnum{1}\ School of Mathematical Sciences,
          Beijing Normal University,
          Beijing 100875, P.R. China.\\
          \affilnum{2}\ Laboratory of Mathematics and Complex Systems, Ministry of Education and School of Mathematical Sciences, Beijing Normal University, Beijing 100875, P.R. China. \\
          \affilnum{3}\ Mathematics Department,
          University of Massachusetts Dartmouth,
          North Dartmouth, MA 02747, U.S.A.}
 \emails{{\tt lxdong@mail.bnu.edu.cn} (L.X. Dong),
 {\tt cwang1@umassd.edu} (C. Wang), {\tt hzhang@bnu.edu.cn} (H. Zhang),
          {\tt zrzhang@bnu.edu.cn} (Z.R. Zhang)}

\begin{abstract}
 We present and analyze a new second-order finite difference scheme for the
 Macromolecular Microsphere Composite hydrogel, Time-Dependent Ginzburg-Landau (MMC-TDGL)
 equation, a Cahn-Hilliard equation with Flory-Huggins-deGennes energy potential.
 This numerical scheme with unconditional energy stability is based on the Backward
 Differentiation Formula (BDF) method in time derivation combining with Douglas-Dupont
 regularization term. In addition, we present a point-wise bound of the numerical solution
  for the proposed scheme in the theoretical level. For the convergent analysis, we treat
 three nonlinear logarithmic terms as a whole and deal with all logarithmic terms directly
 by using the property that the nonlinear error inner product is always non-negative.
 Moreover, we present the detailed convergent analysis in $\ell^\infty (0,T; H_h^{-1}) \cap \ell^2 (0,T; H_h^1)$ norm.
 At last, we use the local Newton approximation and multigrid method to solve the nonlinear
 numerical scheme, and various numerical results are presented, including the numerical
 convergence test, positivity-preserving property test, spinodal decomposition, energy dissipation
  and mass conservation properties.
\end{abstract}

\ams{}
\keywords{Cahn-Hilliard equation, Flory-Huggins energy, deGennes diffusive coefficient, energy stability, positivity preserving, convergence analysis.}

\maketitle

\section{Introduction}
\label{sec1}
The Time-Dependent Ginzburg-Landau mesoscopic model for the macromolecular microsphere composite(MMC) hydrogel, called MMC-TDGL equation, was recently proposed in \cite{Zhai2012Investigation} as a new approach to simulating a reticular structure and phase transition process of MMC hydrogel. The MMC-TDGL model accounts for the periodic network structure of MMC through a coarse-grained free energy functional of the Flory-Huggins-deGennes type \cite{Zhai2012Investigation}. The model can describe the growth detail of the well-defined structures intermittent phenomenon with increasing reaction temperature, and many other chemical observable phenomena \cite{Huang2010A}. The idea is that a conserved field variable represents the concentration of one of the components of the mixture(or sometimes, the difference between the concentration of the two components of a binary mixture). The approach is derived via Boltzmann entropy theorem.

Allen-Cahn and Cahn-Hilliard equations are the prototypical models for gradient flows with the Ginzburg-Landau or Flory-Huggins free energy. In some cases, certain stochastic force term has been added in the model, such as the Cahn-Hilliard-Cook model. This model can simulate the structural evolution of mixtures with polymers and block copolymers \cite{FPJ1953} and the phase separation of the small molecule systems including binary alloys, fluid mixtures, inorganic glasses \cite{Chakrabarti1990Dynamics}.
Concerning the computation and analysis of these models, Du et al had a series of works \cite{DD1,DD2,DFY}. Yang et al presented an invariant energy quadratization (IEQ)  approximation \cite{ZWY,ZYW,SY11,SY12,Yang2019On}. Chen et al used the phase field method to investigate composite materials and presented some numerical methods\cite{CKDC,DCZ}. Shen et al designed a few high-order energy stability preserving numerical schemes and provided the corresponding error estimates \cite{SY1,SY2,SY3,SY5,SXY,SX}. These works investigated the nucleation by using string method in virtue of stochastic Allen-Cahn and Cahn-Hilliard equations \cite{zlz}. For the MMC-TDGL equation, Li et al \cite{Li2016An} also have performed some numerical simulations. Also see the related works~\cite{Bosch2016Preconditioning, Diegel2019}, etc.

The convex splitting approach advanced by Eyre \cite{eyre98} is one of the popular energy stable methods. The idea is that the energy admits a splitting into purely convex and concave parts, that is, $E=E_c-E_e,$  where $E_c$ and $E_e$ are both convex. Recently, such an idea has also been applied to a wide class of gradient flows, including either first or second order accurate in both time and space. See the related works for the PFC equation and the modified PFC (MPFC) equation~\cite{Dong2016Convergence,wang10c,wang11a,Wise2009AN}; the epitaxial thin film growth models~\cite{chen12,chen14,cheng2018c,shen12,wang10a,Feng2017A}, and the Cahn-Hilliard flow coupled with fluid motion~\cite{CLW,chen17b,diegel15a,diegel17,han15,liuY17,wise10}, etc.
One well-known drawback of the first order convex splitting approach is that an extra dissipation added to ensure unconditionally stability also introduces a significant amount of numerical error~\cite{Christlieb2014High}. Due to this, second-order energy stable methods have been highly desirable. Recently, a second order convex splitting scheme based on the Crank-Nicolson temporal approximation for solving the MMC-TDGL equation has been proposed in~\cite{Xiao2017A}, however, its convergence analysis is still a large challenge.

The goal of this paper is to extend the convex-splitting framework to develop a second order in both time and space for the MMC-TDGL equation. While some of the technique that worked for the Cahn-Hilliard schemes with the Flory-Huggins type potential are appropriate, the analysis for the positivity-preserving and convergence are more difficulty for the MMC-TDGL scheme mainly owing to the variable diffusive coefficient, called the deGennes diffusion coefficient.

In this paper, we design an unconditionally stable, unconditionally unique solvable, second order in time and space, and convergent scheme for the MMC-TDGL equation based on the convex-splitting method. The scheme is based on the 2nd BFD temporal approximation and the centered difference method in space for the MMC-TDGL equation. In more details, the derivative function with respect to time is approximated by the BDF 3-point stencil. Based on the idea of convex splitting, we treat the convex part implicitly and the linear part explicitly using the second-order Adams-Bashforth extrapolation formula. By a careful calculation, it is hard to get the energy stability owing to the explicit expression of the linear part. To overcome this difficulty for the original numerical scheme, we adopt the similar technique in~\cite{Feng2017A,cheng18ch,CHBDF2}, adding a second order Douglas-Dupont regularization of the form $A\dt \Delta_h (\phi^{n+1}-\phi^n)$. The resulting scheme holds the modified discrete energy non-increasing under a restriction $ A>\chi^2 \rho^2$, which would be accepted as the numerical scheme is a three-level scheme.

In addition, in the continuous case the phase variable remains in the interval of $(0,\nicefrac{1}{\rho})$. In the discrete case, the proposed numerical scheme still keeping this property is highly desired. In the earlier work~\cite{elliott92a}, the author analyzed a fully discrete finite element scheme based on the backward Euler approximation for the Cahn-Hilliard equation with a logarithmic free energy and obtained some theoretical results about the existence, uniqueness and the positivity property of the numerical solution, but this scheme is not unconditionally energy stable.

In the recent literature \cite{Chen2017A}, the authors presented a discrete finite difference numerical scheme based on the convex splitting method of the free energy with logarithmic potential, and established a theoretical justification of the positivity property, regardless of time step size. In this paper, we will adopt similar techniques  in~\cite{Chen2017A} to estimate the positivity property and the convergence analysis, respectively. For the positivity property, in details, the fully discrete numerical scheme is equivalent to a minimization of a strictly convex discrete energy functional, so we can transform the positivity preserving problem of the numerical solution into the problem that the minimizer of this functional could not occur on the boundary points. Due to the logarithmic terms implicitly, we can make use of the following subtle fact: the singular nature of the logarithmic function guarantees that such a minimizer could not occur on a boundary point at all. Although the extra term $A\dt \Delta_h (\phi^{n+1}-\phi^n)$ is added into the numerical scheme, it does not matter because the logarithmic function changes faster than the linear function as the phase variable approaches the boundary points. It is obvious that if the logarithmic term is explicit, such an estimate could not be derived by this method.     Moreover, the term associated with the deGennes coefficient is very challenging. With the help of the following inequality {\em  $\frac{1}{2} \kappa'(\phi_1) (\phi_2-\phi_1)  \le \kappa(\phi_2), \forall \phi_1,\phi_2 \in (0,1) $} about the deGennes coefficient, which plays an essential role in the analysis of the positivity preserving, we can establish the positivity-preserving property. Also see~\cite{Dong2019a, Lv2018} for related discussions.

The key difficulty in the convergence analysis is associated with the logarithmic potential term. In general, when the nonlinear term is a polynomial approximation, the bound estimate of maximum norm of the numerical solution is necessary to justify the convergence analysis \cite{Dong2016Convergence}, but it is not sufficient to solve the case with the logarithmic potential.
In this paper, for the error estimate, we take inner product with error at the time step $t^{n+1}$. We treat the three nonlinear logarithmic terms as a whole in a rough way and then make full use of the convexity of energy about these nonlinear terms to directly deal with all logarithmic terms, because the convexity of energy indicates the corresponding nonlinear error inner product is always non-negative. Moreover, it is observation that in the chemical potential the surface diffusion term with concentration-dependent deGennes type coefficient can be decomposed into two convex terms: one term depended on its convexity can be analyzed in a manner similar to the logarithmic term, and the other term can be used to control the explicit error estimate associated with the linear expansive term. In order to deal with the temporal derivation approximation term, we also introduce a weighted norm. In turn, the convergence analysis could go through for the proposed scheme.

The rest of the paper is organized as follows. In Section 2, we present the MMC-TDGL equation. In Section 3, we present the 2nd BDF numerical scheme, and the positivity-preserving property of the numerical solution is provided in Section 4. The theoretical analysis of the modified energy stability is estimated in Section 5. The detailed convergence analysis is given by Section 6. Some numerical results are presented in Section 7. Concluding remarks are made in Section 8.

 \section{The model equation: MMC-TDGL equation}\label{sec:model}
 We consider a bounded domain $\Omega \subset \mathbb{R}^2$.
 For any $\phi \in H^1 (\Omega)$, with a point-wise bound, $\phi \in (0, \nicefrac{1}{\rho})\subset (0,1)$, the energy functional is the form of
     \begin{equation}
	\label{CH energy}
E(\phi)=\int_{\Omega}\left( S(\phi) + H(\phi) +\kappa(\phi)|\nabla \phi|^2\right) d {\bf x} ,
	 \end{equation}
where $S(\phi) + H(\phi)$ is the reticular free energy density for the MMC hydrogels
     \begin{equation}
S(\phi)= \frac{\phi}{\tau} \ln \frac{\alpha \phi}{\tau} + \frac{\phi}{N_1} \ln \frac{\beta \phi}{\tau} + (1- \rho\phi) \ln (1- \rho\phi),\quad H(\phi)= \chi\phi(1-\rho\phi),
    \label{CH_energy_SH}
	\end{equation}
and $\kappa(\phi)$ is the deGennes coefficient
    \begin{equation}
\kappa(\phi)= \frac{1}{36\phi(1-\phi)}.
    \label{CH_energy_K}
    \end{equation}
In this model, we denote by $\chi$ the Huggins interaction parameter, by $N_1$ the degree of polymerization of the polymer chains, and by $N_2$, which does not appear explicitly in \eqref{CH energy}, the relative volume of one macromolecular microsphere. The other numbers $\alpha,\beta,\tau$ and $\rho$ depend on $N_2$ and $N_1$, as given by
     \[
\alpha= \pi\left(\sqrt{\frac{N_2}{\pi}}+ \frac{N_1}{2}\right)^2,\quad \beta= \frac{\alpha}{\sqrt{\pi N_2}},\quad \tau= \sqrt{\pi N_2}N_1, \quad \rho= 1+ \frac{N_2}{\tau}.
     \]
Note that all these parameters are positive. Besides, $\rho$ is a little greater than one. The modeling detail can be referred to \cite{Zhai2012Investigation}.

In turn, the MMC-TDGL equation for the MMC hydrogels becomes the following $H^{-1}$ gradient flow associated with the given energy functional \eqref{CH energy}:
     \begin{align}
\partial_t\phi = \Delta \mu ,  \quad \mu := \delta_\phi E =& S'(\phi) + H' (\phi) + \kappa'(\phi) | \nabla \phi |^2 - 2 \nabla \cdot(\kappa (\phi) \nabla \phi) \label{CH equation-0}
\\
  =&
   (\frac{1}{\tau}+\frac{1}{N_1}) \ln \phi - \rho \ln(1-\rho \phi) -2\chi\rho\phi  \nonumber
\\
&+ \frac{2\phi-1}{36\phi^2(1-\phi)^2} | \nabla \phi |^2 - \nabla \cdot \left(\frac{\nabla \phi}{18 \phi(1-\phi)}\right) .  \nonumber
	 \end{align}
Here we have discarded the constant terms in the representation for the chemical potential $\mu$, since these terms will not play any role in the $H^{-1}$ gradient flow.
\section{The numerical scheme}\label{sec:numerical scheme}
In the spatial discretization, the centeral difference approximation is applied. We recall some basic notations of this methodology.
\subsection{Discretization of space and a few preliminary estimates}\label{subsec:finite difference}
We use the notations and results for some discrete functions and
operators from~\cite{Wise2009AN,wise10,guo16}.
Let $\Omega = (0,L_x)\times(0,L_y)$, for simplicity, we assume $L_x =L_y =: L > 0$. Let $N\in\mathbb{N}$ be given, and define the grid spacing $h := \nicefrac{L}{N}$.  We also assume -- but only for simplicity of notation, ultimately -- that the mesh spacing in the $x$ and $y$-directions are the same. The following two uniform, infinite grids with grid spacing $h>0$, are introduced
	\[
E := \{ p_{i+\hf} \ |\ i\in {\mathbb{Z}}\}, \quad C := \{ p_i \ |\ i\in {\mathbb{Z}}\},
	\]
where $p_i = p(i) := (i-\hf)\cdot h$. Consider the following 2-D discrete $N^2$-periodic function spaces:
    \begin{eqnarray*}
	\begin{aligned}
{\mathcal C}_{\rm per} &:= \left\{\nu: C\times C
\rightarrow {\mathbb{R}}\ \middle| \ \nu_{i,j} = \nu_{i+\alpha N,j+\beta N}, \ \forall \, i,j,\alpha,\beta\in \mathbb{Z} \right\},
	\\
{\mathcal E}^{\rm x}_{\rm per} &:=\left\{\nu: E\times C\rightarrow {\mathbb{R}}\ \middle| \ \nu_{i+\frac12,j}= \nu_{i+\frac12+\alpha N,j+\beta N}, \ \forall \, i,j,\alpha,\beta\in \mathbb{Z}\right\} .
	\end{aligned}
	\end{eqnarray*}
Here we are using the identification $\nu_{i,j} = \nu(p_i,p_j)$, \emph{et cetera}. The space  ${\mathcal E}^{\rm y}_{\rm per}$ is analogously defined. The function of ${\mathcal C}_{\rm per}$ is called {\emph{cell-centered function}}. The function of ${\mathcal E}^{\rm x}_{\rm per}$ and ${\mathcal E}^{\rm y}_{\rm per}$,  is called {\emph{edge-centered function}}.  We also define the mean zero space
    \[
\mathring{\mathcal C}_{\rm per}:=\left\{\nu\in {\mathcal C}_{\rm per} \ \middle| 0 = \overline{\nu} :=  \frac{h^2}{| \Omega|} \sum_{i,j=1}^N \nu_{i,j} \right\} .
	\]
In addition, $\vec{\mathcal{E}}_{\rm per}$ is defined as $\vec{\mathcal{E}}_{\rm per} := {\mathcal E}^{\rm x}_{\rm per}\times {\mathcal E}^{\rm y}_{\rm per}$.
We now introduce the difference and average operators on the spaces:
	\begin{eqnarray*}
&& A_x \nu_{i+\hf,j} := \frac{1}{2}\left(\nu_{i+1,j} + \nu_{i,j} \right), \quad D_x \nu_{i+\hf,j} := \frac{1}{h}\left(\nu_{i+1,j} - \nu_{i,j} \right),
	\\
&& A_y \nu_{i,j+\hf} := \frac{1}{2}\left(\nu_{i,j+1} + \nu_{i,j} \right), \quad D_y \nu_{i,j+\hf} := \frac{1}{h}\left(\nu_{i,j+1} - \nu_{i,j} \right) ,
	\end{eqnarray*}
with $A_x,\, D_x: {\mathcal C}_{\rm per}\rightarrow{\mathcal E}_{\rm per}^{\rm x}$, $A_y,\, D_y: {\mathcal C}_{\rm per}\rightarrow{\mathcal E}_{\rm per}^{\rm y}$.
Likewise,
    \begin{eqnarray*}
&& a_x \nu_{i, j} := \frac{1}{2}\left(\nu_{i+\hf, j} + \nu_{i-\hf, j} \right),	 \quad d_x \nu_{i, j} := \frac{1}{h}\left(\nu_{i+\hf, j} - \nu_{i-\hf, j} \right),
	\\
&& a_y \nu_{i,j} := \frac{1}{2}\left(\nu_{i,j+\hf} + \nu_{i,j-\hf} \right),	 \quad d_y \nu_{i,j} := \frac{1}{h}\left(\nu_{i,j+\hf} - \nu_{i,j-\hf} \right),
	\end{eqnarray*}
with $a_x,\, d_x : {\mathcal E}_{\rm per}^{\rm x}\rightarrow{\mathcal C}_{\rm per}$, $a_y,\, d_y : {\mathcal E}_{\rm per}^{\rm y}\rightarrow{\mathcal C}_{\rm per}$.
The discrete gradient operator $\nabh:{\mathcal C}_{\rm per}\rightarrow \vec{\mathcal{E}}_{\rm per}$ is given by
    \[
\nabh\nu_{i,j} =\left( D_x\nu_{i+\hf, j},  D_y\nu_{i, j+\hf}\right) ,
	\]
and the discrete divergence $\nabh\cdot :\vec{\mathcal{E}}_{\rm per} \rightarrow {\mathcal C}_{\rm per}$ is defined via
	\[
\nabh\cdot\vec{f}_{i,j} = d_x f^x_{i,j}	+ d_y f^y_{i,j},
	\]
where $\vec{f} = (f^x,f^y)\in \vec{\mathcal{E}}_{\rm per}$. The standard 2-D discrete Laplacian, $\Delta_h : {\mathcal C}_{\rm per}\rightarrow{\mathcal C}_{\rm per}$, becomes
    \begin{align*}
\Delta_h \nu_{i,j} := &  d_x(D_x \nu)_{i,j} + d_y(D_y \nu)_{i,j}
	\\
= & \ \frac{1}{h^2}\left( \nu_{i+1,j}+\nu_{i-1,j}+\nu_{i,j+1}+\nu_{i,j-1} - 4\nu_{i,j}\right).
	\end{align*}
More generally, suppose $\mathcal{D}$ is a periodic \emph{scalar} function that is defined at all of the edge-center points and $\vec{f}\in\vec{\mathcal{E}}_{\rm per}$, then $\mathcal{D}\vec{f}\in\vec{\mathcal{E}}_{\rm per}$, assuming point-wise multiplication, and we may define
	\[
\nabla_h\cdot \big(\mathcal{D} \vec{f} \big)_{i,j} = d_x\left(\mathcal{D}f^x\right)_{i,j}  + d_y\left(\mathcal{D}f^y\right)_{i,j}  .
	\]
Specifically, if $\nu\in \mathcal{C}_{\rm per}$, then $\nabla_h \cdot\left(\mathcal{D} \nabla_h  \ \ \right):\mathcal{C}_{\rm per} \rightarrow \mathcal{C}_{\rm per}$ is defined point-wise via
	\[
\nabla_h\cdot \big(\mathcal{D} \nabla_h \nu \big)_{i,j} = d_x\left(\mathcal{D}D_x\nu\right)_{i,j}  + d_y\left(\mathcal{D} D_y\nu\right)_{i,j} .
	\]
Now we are ready to define the following grid inner products:
	\begin{equation*}
	\begin{aligned}
\ciptwo{\nu}{\xi} &:= h^2\sum_{i,j=1}^N  \nu_{i,j}\, \xi_{i,j},\ \nu,\, \xi\in {\mathcal C}_{\rm per},\,
&\eipx{\nu}{\xi} := \ciptwo{a_x(\nu\xi)}{1} ,\ \nu,\, \xi\in{\mathcal E}^{\rm x}_{\rm per},
\\
\eipy{\nu}{\xi} &:= \ciptwo{a_y(\nu\xi)}{1} ,\ \nu,\, \xi\in{\mathcal E}^{\rm y}_{\rm per},
	\end{aligned}
	\end{equation*}
	\[
\eipvec{\vec{f}_1}{\vec{f}_2} : = \eipx{f_1^x}{f_2^x}	+ \eipy{f_1^y}{f_2^y} , \quad \vec{f}_i = (f_i^x,f_i^y) \in \vec{\mathcal{E}}_{\rm per}, \ i = 1,2.
	\]
In turn, the following norms could be appropriately introduced for cell-centered functions. If $\nu\in {\mathcal C}_{\rm per}$, then $\nrm{\nu}_2^2 := \ciptwo{\nu}{\nu}$; $\nrm{\nu}_p^p := \ciptwo{|\nu|^p}{1}$, for $1\le p< \infty$, and $\nrm{\nu}_\infty := \max_{1\le i,j\le N}\left|\nu_{i,j}\right|$.
We define  norms of the gradient as follows: for $\nu\in{\mathcal C}_{\rm per}$,
	\[
\nrm{ \nabla_h \nu}_2^2 : = \eipvec{\nabh \nu }{ \nabh \nu } = \eipx{D_x\nu}{D_x\nu} + \eipy{D_y\nu}{D_y\nu},
	\]
and, more generally, for $1\le p<\infty$,
	\begin{equation*}
\nrm{\nabla_h \nu}_p := \left( \eipx{|D_x\nu|^p}{1} + \eipy{|D_y\nu|^p}{1} \right)^{\frac1p} .
	\end{equation*}
Higher order norms can be similarly formulated. For example,
	\[
\nrm{\nu}_{H_h^1}^2 : =  \nrm{\nu}_2^2+ \nrm{ \nabla_h \nu}_2^2, \quad \nrm{\nu}_{H_h^2}^2 : =  \nrm{\nu}_{H_h^1}^2  + \nrm{ \Delta_h \nu}_2^2.
	\]
\begin{lemma}
	\label{lemma1}
For any $\psi, \nu \in {\mathcal C}_{\rm per}$ and any $\vec{f}\in\vec{\mathcal{E}}_{\rm per}$, the following summation by parts formulas are valid:
	\begin{equation}
\ciptwo{\psi}{\nabla_h\cdot\vec{f}} = - \eipvec{\nabla_h \psi}{ \vec{f}}, \quad \ciptwo{\psi}{\nabla_h\cdot \left(\mathcal{D}\nabla_h\nu\right)} = - \eipvec{\nabla_h \psi }{ \mathcal{D}\nabla_h\nu} .
\label{lemma 1-0}
	\end{equation}
	\end{lemma}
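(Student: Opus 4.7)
The plan is to prove the first identity componentwise by discrete Abel summation, and then derive the second identity as an immediate corollary by specializing $\vec{f}$.

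First I would expand $\ciptwo{\psi}{\nabla_h\cdot\vec{f}}$ using the definition of the discrete divergence, writing it as $h^2\sumij \psi_{i,j}\bigl(d_x f^x_{i,j}+d_y f^y_{i,j}\bigr)$, and split the sum into its $x$- and $y$-contributions. Focusing on the $x$-sum, I substitute $d_x f^x_{i,j}=h^{-1}(f^x_{i+\hf,j}-f^x_{i-\hf,j})$ and perform an index shift in the term involving $f^x_{i-\hf,j}$, replacing the summation index $i$ by $i+1$. This step is where $N^2$-periodicity of $\psi$ and of $f^x$ is used: the boundary contributions that would normally appear in integration by parts cancel exactly because the sum is over a complete period. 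After collecting terms, the $x$-sum telescopes to
\begin{equation*}
  -h^2\sumij (D_x\psi)_{i+\hf,j}\, f^x_{i+\hf,j}.
\end{equation*}

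Next I would identify this quantity with $-\eipx{D_x\psi}{f^x}$. From the definitions,
\begin{equation*}
  \eipx{D_x\psi}{f^x}=\ciptwo{a_x((D_x\psi)f^x)}{1}
  =\tfrac{h^2}{2}\sumij\bigl[(D_x\psi\cdot f^x)_{i+\hf,j}+(D_x\psi\cdot f^x)_{i-\hf,j}\bigr],
\end{equation*}
and by periodicity the two half-integer sums coincide, reducing this to $h^2\sumij (D_x\psi)_{i+\hf,j}\, f^x_{i+\hf,j}$, as required. The analogous computation in $y$ yields the matching $-\eipy{D_y\psi}{f^y}$ contribution, and summing the two halves gives the first identity in \eqref{lemma 1-0}.

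For the second identity, I would simply note that $\mathcal{D}$ is a periodic edge-centered scalar and $\nabla_h\nu=(D_x\nu,D_y\nu)\in\vec{\mathcal{E}}_{\rm per}$, so that the pointwise product $\mathcal{D}\nabla_h\nu$ lies in $\vec{\mathcal{E}}_{\rm per}$. Applying the first identity with $\vec{f}:=\mathcal{D}\nabla_h\nu$ delivers the result with no further work. There is no substantive obstacle here; the argument is bookkeeping for a standard discrete integration by parts, and the only point that deserves care is verifying that the averaging operator $a_x$ in the definition of $\eipx{\cdot}{\cdot}$ is consistent under the periodic index shift, so that the telescoped sum matches the edge inner product exactly. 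This is where periodicity is genuinely used, and it is the only place where the absence of boundary terms is invoked.
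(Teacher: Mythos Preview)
The paper does not actually supply a proof of this lemma; it is stated without proof as a standard preliminary result, with the underlying framework attributed to the references~\cite{Wise2009AN,wise10,guo16}. Your proposal is correct and is exactly the standard argument: componentwise Abel summation with a periodic index shift, identification of the telescoped sum with the edge inner product (using periodicity so that the two half-integer sums in the definition of $a_x$ coincide), and then specialization $\vec{f}=\mathcal{D}\nabla_h\nu$ for the second identity. There is nothing further to compare.
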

To facilitate the convergence analysis, we need to introduce a discrete analogue of the space $H_{per}^{-1}\left(\Omega\right)$, as outlined in~\cite{wang11a}. Suppose that $\mathcal{D}$ is a positive, periodic scalar function defined at all of edge-center points. For any $\phi\in{\mathcal C}_{\rm per}$, there exists a unique $\psi\in\mathring{\mathcal C}_{\rm per}$ that solves
	\begin{eqnarray*}
\mathcal{L}_{\mathcal{D}}(\psi):= - \nabla_h \cdot\left(\mathcal{D}\nabla_h \psi\right) = \phi - \overline{\phi} ,
	\end{eqnarray*}
where $\overline{\phi} := |\Omega|^{-1}\ciptwo{\phi}{1}$. We equip this space with a bilinear form: for any $\phi_1,\, \phi_2\in \mathring{\mathcal C}_{\rm per}$, define
    \begin{equation*}
\cipgen{ \phi_1 }{ \phi_2 }{\mathcal{L}_{\mathcal{D}}^{-1}} := \eipvec{\mathcal{D}\nabla_h \psi_1 }{ \nabla_h \psi_2 },
	\end{equation*}
where $\psi_i\in\mathring{\mathcal C}_{\rm per}$ is the unique solution to
	\begin{equation*}
\mathcal{L}_{\mathcal{D}}(\psi_i):= - \nabla_h \cdot\left(\mathcal{D}\nabla_h \psi_i\right)  = \phi_i, \quad i = 1, 2.
	\end{equation*}
The following identity~\cite{wang11a} is easy to prove via summation-by-parts:
	\begin{equation}
\cipgen{\phi_1 }{ \phi_2 }{\mathcal{L}_{\mathcal{D}}^{-1}} = \ciptwo{\phi_1}{ \mathcal{L}_{\mathcal{D}}^{-1} (\phi_2) } = \ciptwo{ \mathcal{L}_{\mathcal{D}}^{-1} (\phi_1) }{\phi_2 },
	\end{equation}
and since $\mathcal{L}_{\mathcal{D}}$ is symmetric positive definite, $\cipgen{ \ \cdot \ }{\ \cdot \ }{\mathcal{L}_{\mathcal{D}}^{-1}}$ is an inner product on $\mathring{\mathcal C}_{\rm per}$~\cite{wang11a}. When $\mathcal{D}\equiv 1$, we drop the subscript and write $\mathcal{L}_{1} = \mathcal{L}$, and in this case we usually write $\cipgen{ \ \cdot \ }{\ \cdot \ }{\mathcal{L}_{\mathcal{D}}^{-1}} =: \cipgen{ \ \cdot \ }{\ \cdot \ }{-1,h}$. In the gerneral setting, the norm associated to this inner product is denoted $\nrm{\phi}_{\mathcal{L}_{\mathcal{D}}^{-1}} := \sqrt{\cipgen{\phi }{ \phi }{\mathcal{L}_{\mathcal{D}}^{-1}}}$, for all $\phi \in \mathring{\mathcal C}_{\rm per}$. If $\mathcal{D}\equiv 1$, we write $\nrm{\, \cdot \, }_{\mathcal{L}_{\mathcal{D}}^{-1}} =: \nrm{\, \cdot \, }_{-1,h}$.\\
The following preliminary results is associated with the existence of a convex splitting and the error analysis in Section 6.
\begin{proposition}
\label{convex splitting}
(1) S and -H are both convex in $(0,\nicefrac{1}{\rho})$, where S and H are defined by \eqref{CH_energy_SH};\\
(2) $K(u,v):=\kappa(u) v^2$ is convex in $(0,\nicefrac{1}{\rho})\times \mathbb{R}$, where $\kappa$ is defined by \eqref{CH_energy_K};\\
(3) $K_1(u,v):=\left(\kappa(u)-\frac{1}{36}\right) v^2$  and $K_2(v):=\frac{1}{36} v^2$ are both convex in $(0,\nicefrac{1}{\rho})\times \mathbb{R}$ and $\mathbb{R}$, respectively.
\end{proposition}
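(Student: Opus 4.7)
The plan is to verify each of the three claims independently by direct calculus arguments, leveraging for (2) and (3) the simple partial-fraction identity $\kappa(u)=\frac{1}{36}\bigl(\frac{1}{u}+\frac{1}{1-u}\bigr)$, and for (3) an algebraic simplification coming from the identity $\frac{1}{u}+\frac{1}{1-u}=\frac{1}{u(1-u)}$.

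For (1), I would just differentiate twice. A direct computation gives
\[
S''(\phi) = \frac{1}{\tau\phi} + \frac{1}{N_1\phi} + \frac{\rho^2}{1-\rho\phi},
\]
each term strictly positive on $(0,\nicefrac{1}{\rho})$ since $\tau, N_1, \rho>0$ and $1-\rho\phi>0$ there. For $H$, which is a quadratic in $\phi$ with leading coefficient $-\chi\rho$, we simply get $(-H)''(\phi)=2\chi\rho>0$.

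For (2), I would apply partial fractions to write
\[
K(u,v) = \frac{v^2}{36\,u} + \frac{v^2}{36(1-u)},
\]
and invoke the standard fact that the quadratic-over-linear (perspective) function $(u,v)\mapsto v^2/u$ is jointly convex on $\{u>0\}\times\mathbb{R}$. The second summand is its composition with the affine map $u\mapsto 1-u$, hence convex for $u<1$ and a fortiori on $(0,\nicefrac{1}{\rho})$. Summing yields (2).

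Part (3) is the main work. Convexity of $K_2$ is immediate from $K_2''(v)=\frac{1}{18}>0$. For $K_1$, I would compute the $2\times 2$ Hessian directly. Writing $g(u):=\frac{1}{u}+\frac{1}{1-u}$, so that $36\kappa=g$ and thus $36(\kappa-\frac{1}{36})=g-1$, the Hessian entries are
\[
(K_1)_{uu}=\tfrac{v^2}{36}\,g''(u),\qquad (K_1)_{vv}=\tfrac{1}{18}(g(u)-1),\qquad (K_1)_{uv}=\tfrac{v}{18}\,g'(u).
\]
Both diagonals are $\ge 0$: $g''>0$ term-by-term, and $g\ge 4>1$ on $(0,1)$ by AM-HM. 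The determinant reduces to $\frac{v^2}{648}\bigl[g''(g-1)-2(g')^2\bigr]$, and showing non-negativity of the bracket is the main obstacle. I would dispatch it via the substitution $a:=1/u$, $b:=1/(1-u)$, which satisfies the clean identity $a+b=ab$. Setting $s:=a+b=ab=1/(u(1-u))$, one finds $g=s$, $g'=b^2-a^2$, and (after a short expansion using $a^2+b^2=s^2-2s$) $g''=2s^2(s-3)$ together with $gg''-2(g')^2=2s^3$. Hence $g''(g-1)-2(g')^2=2s^3-g''=6s^2>0$, which settles positive semi-definiteness of the Hessian and thus the convexity of $K_1$ on $(0,\nicefrac{1}{\rho})\times\mathbb{R}$.
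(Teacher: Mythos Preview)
Your proof is correct. Part~(1) matches the paper exactly. In parts~(2) and~(3) you reach the same conclusions by somewhat different routes.

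For~(2), the paper computes the Hessian of $K$ directly and checks that all principal minors are nonnegative. Your partial-fraction decomposition $\kappa(u)=\tfrac{1}{36}\bigl(\tfrac{1}{u}+\tfrac{1}{1-u}\bigr)$ together with the perspective-function convexity of $(u,v)\mapsto v^2/u$ is more conceptual and completely bypasses the Hessian calculation; it also makes the result an immediate corollary of a standard convex-analysis fact rather than an \emph{ad hoc} computation.

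For~(3), both you and the paper compute the Hessian of $K_1$ and reduce the question to the sign of its determinant. The paper expands $(3u^2-3u+1)(u^2-u+1)-(2u-1)^2$ directly (setting $w=u^2-u$ reduces it to $3w^2$), obtaining $\det\nabla^2 K_1 = \tfrac{3v^2}{18^2 u^2(1-u)^2}$. Your substitution $a=1/u$, $b=1/(1-u)$ with the identity $a+b=ab=:s$ gives the same answer via $g''(g-1)-2(g')^2=6s^2$, hence $\det = \tfrac{v^2 s^2}{108}$, which agrees with the paper's expression. Your route is arguably cleaner because the symmetric-function identities in $a,b$ collapse quickly, whereas the paper's direct expansion requires spotting the right grouping; on the other hand, the paper's approach is entirely self-contained and does not rely on knowing the perspective-function trick or the $a+b=ab$ identity.
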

\begin{proof}
(1) For $S, H, K_2$, differentiating $S, H, K_2$ twice, we obtain
     \[
S''(\phi) = \left( \frac{1}{\tau}+\frac{1}{N_1} \right) \frac{1}{\phi} + \frac{\rho^2}{1-\rho \phi},\quad H''(\phi) = -2\chi \rho, \quad K_2''(v) = \frac{1}{18}>0.
     \]
When $\phi \in (0,\nicefrac{1}{\rho})$, we have $S''(\phi)>0$ and $H''(\phi)< 0$.\\

(2) For $K(u,v):=\kappa(u) v^2$, by some careful calculations, we obtain the Hessian matrix of $K$:\\
     \begin{equation}
\nabla^2 K =
\left(
     \begin{array}{cc}
    \frac{(3u^2-3u+1)v^2}{18u^3(1-u)^3} & \frac{(2u-1)v}{18u^2(1-u)^2} \\
    \frac{(2u-1)v}{18u^2(1-u)^2} & \frac{1}{18u(1-u)} \\
     \end{array}
\right).\nonumber
     \end{equation}
The first-order principal minors of the matrix $\nabla^2 K$ are
     \[
D_1 = \frac{(3u^2-3u+1)v^2}{18u^3(1-u)^3}, \quad D_2 = \frac{1}{18u(1-u)}.
     \]
The second-order principal minor is
     \[
D_{12} = det(\nabla^2 K) = \frac{v^2}{18^2u^3(1-u)^3}.
     \]
These principal minors are all non-negative when $u \in (0,\nicefrac{1}{\rho})$ and $v \in \mathbb{R}$. The Hessian matrix $\nabla^2 K$ is positive semi-definite and thus $K$ is convex in $(0,\nicefrac{1}{\rho})\times \mathbb{R}$.\\

(3) For $K_1(u,v):=\left(\kappa(u)-\frac{1}{36}\right) v^2$, by some careful calculations, we obtain the Hessian matrix of $K_1$:\\
     \begin{equation}
\nabla^2 K_1 =
\left(
     \begin{array}{cc}
    \frac{(3u^2-3u+1)v^2}{18u^3(1-u)^3} & \frac{(2u-1)v}{18u^2(1-u)^2} \\
    \frac{(2u-1)v}{18u^2(1-u)^2} & \frac{u^2-u+1}{18u(1-u)} \\
     \end{array}
\right).\nonumber
     \end{equation}
The first-order principal minors of the matrix $\nabla^2 K_1$ are
     \[
D_1 = \frac{(3u^2-3u+1)v^2}{18u^3(1-u)^3}, \quad D_2 = \frac{u^2-u+1}{18u(1-u)}.
     \]
The second-order principal minor is
     \[
D_{12} = det(\nabla^2 K_1) = \frac{3}{18^2u^2(1-u)^2}.
     \]
These principal minors are all non-negative when $u \in (0,\nicefrac{1}{\rho})$ and $v \in \mathbb{R}$. The Hessian matrix $\nabla^2 K_1$ is positive semi-definite and thus $K_1$ is convex in $(0,\nicefrac{1}{\rho})\times \mathbb{R}$.
\end{proof}
With the preparation above, we turn to discussing the discrete energy based on a convex splitting.

Define the discrete energy $F :{\mathcal C}_{\rm per} \rightarrow \mathbb{R}$ as
     \begin{align*}
F(\phi) &= \ciptwo{S(\phi)+ H(\phi) + \kappa(\phi) (a_x((D_x\phi)^2) + a_y((D_y\phi)^2)) }{1}
    \\
    &= h^2 \sumij \left(S(\phi_{i,j}) + H(\phi_{i,j}) + \kappa(\phi_{i,j}) (a_x((D_x\phi)^2)_{i,j} + a_y((D_y\phi)^2)_{i,j} )\right).
	 \end{align*}
Define
     \[
F_S(\phi) = \ciptwo{S(\phi)}{1} = h^2 \sumij S(\phi_{i,j}),
     \]
     \[
F_e(\phi) = F_H(\phi) = \ciptwo{-H(\phi)}{1} = -h^2 \sumij H(\phi_{i,j}),
     \]
     \begin{align*}
F_{K_1}(\phi) =& \ciptwo{(\kappa(\phi)- \frac{1}{36} ) (a_x((D_x\phi)^2) + a_y((D_y\phi)^2))}{1} \\
=& h^2 \sumij (\kappa(\phi_{i,j})- \frac{1}{36} ) (a_x((D_x\phi)^2)_{i,j} + a_y((D_y\phi)^2)_{i,j} ),
     \end{align*}
     \[
F_{K_2}(\phi) = h^2 \sumij  \frac{1}{36} (a_x((D_x\phi)^2)_{i,j} + a_y((D_y\phi)^2)_{i,j} ) = \frac{1}{36} \nrm{ \nabla_h \phi}_2^2 ,
     \]
     \[
F_c(\phi) = F_S(\phi) + F_{K_1}(\phi) + F_{K_2}(\phi) .
     \]
\begin{lemma}
(Existence of a convex splitting) Assume that $\phi \in {\mathcal C}_{\rm per}$. We have
$$F(\phi) = F_c(\phi) - F_e(\phi) = F_S(\phi) + F_{K_1}(\phi) + F_{K_2}(\phi) - F_H(\phi), $$
where $F_c(\phi), F_e(\phi), F_S(\phi), F_{K_1}(\phi),  F_{K_2}(\phi)$ and $ F_H(\phi) $ are both convex.
\end{lemma}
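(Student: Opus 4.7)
\medskip
\noindent\textbf{Proof plan.} The identity $F=F_c-F_e$ is purely definitional: by the definitions of $F_S,F_{K_1},F_{K_2},F_H$, the reticular energy densities $S+H$ in $F$ split exactly into $F_S-F_H$, while the splitting $\kappa(\phi)=\bigl(\kappa(\phi)-\tfrac{1}{36}\bigr)+\tfrac{1}{36}$, combined with the definition of $\|\nabla_h\phi\|_2^2$ in terms of $a_x,a_y$, yields the surface-diffusion part as $F_{K_1}+F_{K_2}$. So the first line is obtained by a direct term-by-term comparison; the only real work is establishing convexity of each summand.

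The convexity of $F_S$, $F_H$, and $F_{K_2}$ is essentially immediate. Each of $F_S(\phi)=h^2\sum S(\phi_{i,j})$ and $F_H(\phi)=-h^2\sum H(\phi_{i,j})$ is a sum, over the grid points, of a one-variable convex function (by Proposition \ref{convex splitting}(1), $S$ and $-H$ are convex on $(0,\nicefrac{1}{\rho})$) composed with the linear evaluation map $\phi\mapsto\phi_{i,j}$, hence convex in $\phi$. For $F_{K_2}(\phi)=\tfrac{1}{36}\|\nabla_h\phi\|_2^2$, this is just a nonnegative quadratic form in $\phi$, and its convexity is immediate. Consequently once $F_{K_1}$ is shown to be convex, $F_c=F_S+F_{K_1}+F_{K_2}$ is convex as a sum of convex functions, and $F_e=F_H$ is convex.

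The main obstacle is therefore the convexity of $F_{K_1}$, since $\kappa(\phi_{i,j})$ is evaluated at cell centers while the gradient squared lives on edges, so one cannot apply Proposition \ref{convex splitting}(3) termwise as written. The plan is to reindex the sum so that each contribution is aligned with a single edge. Writing out $a_x$ and $a_y$, and using $N^2$-periodicity to shift the summation index, one obtains
\begin{align*}
F_{K_1}(\phi)=\frac{h^2}{2}\sum_{i,j=1}^N \Bigl[&K_1(\phi_{i,j},D_x\phi_{i+\hf,j})+K_1(\phi_{i+1,j},D_x\phi_{i+\hf,j})\\
&+K_1(\phi_{i,j},D_y\phi_{i,j+\hf})+K_1(\phi_{i,j+1},D_y\phi_{i,j+\hf})\Bigr],
\end{align*}
where $K_1(u,v)=(\kappa(u)-\tfrac{1}{36})v^2$. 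By Proposition \ref{convex splitting}(3), $K_1$ is convex on $(0,\nicefrac{1}{\rho})\times\mathbb{R}$. Each of the four summands is $K_1$ composed with the linear map $\phi\mapsto(\phi_{i,j},D_x\phi_{i+\hf,j})$ (respectively with the other three linear evaluations), so each summand is convex in $\phi$ on the admissible set where all cell values lie in $(0,\nicefrac{1}{\rho})$. A sum of convex functions being convex, $F_{K_1}$ is convex, completing the argument.

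I expect the only subtlety to be the bookkeeping in the reindexing: one must track that the $a_x$ average places the cell value $\phi_{i,j}$ in contact with the two edges $(i\pm\hf,j)$, and that a shift $i\mapsto i-1$ under the periodic sum converts $\kappa(\phi_{i,j})(D_x\phi_{i-\hf,j})^2$ into $\kappa(\phi_{i+1,j})(D_x\phi_{i+\hf,j})^2$. Once this rearrangement is in place, the convexity of $K_1$ from Proposition \ref{convex splitting}(3) does all the real work.
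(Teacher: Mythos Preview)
Your proposal is correct and follows exactly the route the paper intends: the paper states this lemma without proof immediately after Proposition~\ref{convex splitting}, relying on that proposition to supply the convexity of $S$, $-H$, $K_1$, and $K_2$, and your argument simply fills in the elementary details (the edge-reindexing for $F_{K_1}$ being the only nontrivial step, which you handle correctly via periodicity). There is nothing to add.
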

\subsection{The fully discrete numerical scheme}

We follow the idea of the convexity splitting and consider the following semi-implicit, fully discrete scheme: for $n\geq 1$, given $\phi^n,\phi^{n-1}\in \mathcal{C}_{\rm per}$, find $\phi^{n+1}$, $\mu^{n+1}\in  \mathcal{C}_{\rm per}$, such that
\begin{small}
	\begin{align}	
\frac{3\phi^{n+1} - 4\phi^n + \phi^{n-1}}{2\dt}=& \ \Delta_h \mu^{n+1}  ,
	\label{scheme-CH_LOG-1}
    \\
\mu^{n+1} =& \ \delta_\phi F_c(\phi^{n+1}) - \delta_\phi F_e(2\phi^n-\phi^{n-1}) - A \dt \Dh(\phi^{n+1}-\phi^n) \nonumber\\
=& \ \delta_\phi F_S(\phi^{n+1}) + \delta_\phi F_{K_1}(\phi^{n+1}) + \delta_\phi F_{K_2}(\phi^{n+1})\nonumber\\
&- \delta_\phi F_H(2\phi^n-\phi^{n-1})- A \dt \Dh(\phi^{n+1}-\phi^n)\nonumber\\
=& \ S'(\phi^{n+1})+ \kappa'(\phi^{n+1})\left(a_x((D_x \phi^{n+1})^2)+a_y((D_y \phi^{n+1})^2)\right)      \label{scheme-mu-0}\\
             &- 2d_x(A_x\kappa(\phi^{n+1})D_x \phi^{n+1})
             - 2d_y(A_y\kappa(\phi^{n+1})D_y \phi^{n+1})
             	\nonumber\\
             &+ H'(2\phi^n-\phi^{n-1})	 - A \dt \Dh(\phi^{n+1}-\phi^n)\nonumber
                 .
	\end{align}
    \end{small}
The initialization step is as follows:
     \begin{equation}
\phi^1:=\phi^0, \label{scheme-CH_initial}
     \end{equation}
where
     \[
  S'(\phi)=(\frac{1}{\tau}+\frac{1}{N_1})\ln \phi - \rho \ln(1-\rho \phi),\, H'(\phi)=-2\chi\rho\phi,\, \kappa'(\phi)= \frac{2\phi-1}{36\phi^2(1-\phi)^2},
     \]
and $A$ is a positive constant independent on the time step $\dt$ and the spatial mesh step $h$.

Since $\mu$ follows the Laplacian $\Dh$, we omit constants in expressions $S'(\phi)$ and $H'(\phi)$ above.
\begin{remark}
Here adding the extra term $A \dt \Dh(\phi^{n+1}-\phi^n)$ is to guarantee dissipation of the modified discrete energy corresponding to the continuous case in the theoretical level due to the explicitly concave term. In fact, the original discrete energy is numerically non-increasing with time.
In addition, this small term is
$\mathcal{O}(\dt^2)+\mathcal{O}(h^2)$ and there is no adding the extra challenge for the convergent analysis.
\end{remark}
If solutions to the  scheme \eqref{scheme-CH_LOG-1}-\eqref{scheme-CH_initial} exist, it is clear that, for any $n\in\mathbb{N}$ and $n\geq 1$,
     \begin{equation}
\overline{\phi}_{mod}:=|\Omega|^{-1}\ciptwo{\frac{3\phi^{n+1}-\phi^n}{2}}{1} = |\Omega|^{-1}\ciptwo{\frac{3\phi^n-\phi^{n-1}}{2}}{1},\label{mass conservation}
     \end{equation}
     \begin{equation}
\overline{\phi^n}:=|\Omega|^{-1}\ciptwo{\phi^n}{1},\label{mass_define}
     \end{equation}
with $0<\overline{\phi}_{mod} < \nicefrac{1}{\rho}$ and $0<\overline{\phi^n} < \nicefrac{1}{\rho}$.

Thus we obtain
     \[
\ciptwo{\frac{3\phi^{n+1} - \phi^n}{2} - \overline{\phi}_{mod}}{1}=0.
     \]
From the scheme \eqref{scheme-CH_initial}, we have
     \begin{equation}
\overline{\phi}_{0} = \overline{\phi^{1}}.\label{mass conserv-0}
     \end{equation}
Combining \eqref{mass conservation} with \eqref{mass conserv-0}, we get the following mass conservation formula
     \begin{equation}
\overline{\phi}_0 :=  |\Omega|^{-1}\ciptwo{\phi^0}{1} = |\Omega|^{-1}\ciptwo{\phi^1}{1} = \cdots =  |\Omega|^{-1}\ciptwo{\phi^n}{1} = \overline{\phi^n}.\label{mass_conserv-1}
     \end{equation}
 \section{Positivity-preserving property }
 The proof of the following lemma could be found in~\cite{Chen2017A}.
 \begin{lemma}
	\label{CH-positivity-Lem-0}
Suppose that $\phi_1$, $\phi_2 \in \mathcal{C}_{\rm per}$, with $\ciptwo{\phi_1 - \phi_2}{1} = 0$, that is, $\phi_1 - \phi_2\in \mathring{\mathcal{C}}_{\rm per}$, and assume that $\nrm{\phi_1}_\infty < 1$, $\nrm{\phi_2}_\infty \le M$. Then,   we have the following estimate:
     \[
\nrm{\mathcal{L}^{-1} (\phi_1 - \phi_2)}_\infty \le C_1 ,
     \]
where $C_1>0$ depends only upon $M$ and $\Omega$. In particular, $C_1$ is independent of the mesh spacing $h$.
\end{lemma}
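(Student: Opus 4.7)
The plan is to bound $\psi := \mathcal{L}^{-1}(\phi_1 - \phi_2)$ in $\ell^\infty$ by a uniform-in-$h$ discrete $H_h^2$ elliptic regularity estimate, followed by a two-dimensional discrete Sobolev embedding $H_h^2 \hookrightarrow \ell^\infty$. Set $f := \phi_1 - \phi_2$. The hypotheses give $f \in \mathring{\mathcal C}_{\rm per}$ and
$$
\nrm{f}_\infty \le \nrm{\phi_1}_\infty + \nrm{\phi_2}_\infty < 1 + M,
$$
so $\nrm{f}_2 \le (1+M)|\Omega|^{1/2}$. By construction, $\psi$ is the unique mean-zero element of $\mathring{\mathcal C}_{\rm per}$ satisfying $-\Delta_h \psi = f$.

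The first step would be to derive the discrete $H_h^2$ bound for $\psi$. Pairing $-\Delta_h \psi = f$ with $\psi$ and applying the summation-by-parts identity from Lemma~\ref{lemma1} gives $\nrm{\nabla_h \psi}_2^2 = \ciptwo{f}{\psi}$; combining with the discrete Poincar\'e inequality $\nrm{\psi}_2 \le C_\Omega \nrm{\nabla_h \psi}_2$ (valid for mean-zero cell-centered functions, with $C_\Omega$ independent of $h$) yields $\nrm{\psi}_2 + \nrm{\nabla_h \psi}_2 \le C_\Omega \nrm{f}_2$. Since also $\nrm{\Delta_h \psi}_2 = \nrm{f}_2$ directly, one concludes
$$
\nrm{\psi}_{H_h^2} \le C_\Omega \nrm{f}_2 \le C_\Omega (1+M)|\Omega|^{1/2},
$$
with constants depending only on $\Omega$ and $M$.

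The concluding step would be the discrete Sobolev embedding $\nrm{\psi}_\infty \le C \nrm{\psi}_{H_h^2}$, the discrete analogue of $H^2(\Omega) \hookrightarrow L^\infty(\Omega)$ in two dimensions (Morrey, since $2\cdot 2 > 2$). On the rectangular periodic grid this is most cleanly proved by discrete Fourier analysis: expanding $\psi = \sum_{\mathbf k} \widehat \psi(\mathbf k)\, e^{2\pi i \mathbf k\cdot \mathbf x/L}$, one has $\nrm{\psi}_\infty \le \sum_{\mathbf k}|\widehat \psi(\mathbf k)|$, which by Cauchy--Schwarz is dominated by $\bigl(\sum_{\mathbf k}(1+|\mathbf k|^4)^{-1}\bigr)^{1/2} \nrm{\psi}_{H_h^2}$; the prefactor is uniformly bounded in $h$ because the series $\sum_{\mathbf k}(1+|\mathbf k|^4)^{-1}$ converges in two dimensions. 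Chaining this with the $H_h^2$ estimate gives $\nrm{\mathcal L^{-1}(\phi_1 - \phi_2)}_\infty \le C_1(M,\Omega)$ as required.

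The main obstacle is not the logic, which is classical, but rather ensuring that both the discrete Poincar\'e constant and the discrete Sobolev embedding constant are independent of the mesh spacing $h$. Both facts follow from the Fourier-multiplier picture sketched above, using that the symbol $\lambda_h(\mathbf k) = \tfrac{4}{h^2}\bigl(\sin^2(\pi k_1 h/L) + \sin^2(\pi k_2 h/L)\bigr)$ of $-\Delta_h$ is comparable to $|\mathbf k|^2$ uniformly in $h$ on the admissible frequency range; alternatively one can use discrete Green's function bounds, exploiting the only-logarithmic growth of the two-dimensional Laplace Green's function on the torus. This is precisely the estimate established and used in~\cite{Chen2017A}, which is cited in the statement of the lemma.
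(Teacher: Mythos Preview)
Your argument is correct and is exactly the standard route: bound $\|\phi_1-\phi_2\|_2$ via the $\ell^\infty$ hypotheses, use $-\Delta_h\psi=f$ to get a uniform-in-$h$ discrete $H_h^2$ estimate, and finish with the 2D discrete Sobolev embedding $H_h^2\hookrightarrow\ell^\infty$. The paper does not actually prove this lemma---it simply cites~\cite{Chen2017A}---and your sketch is precisely the approach taken there, so there is nothing to compare.
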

Concerning the deGennes coefficient, we find the following lemma.
\begin{lemma}
	\label{CH-positivity-Lem-1}
Assume that  $\phi_1$, $\phi_2 \in (0,1)$, and $\kappa$ is defined by \eqref{CH_energy_K}. Then
     \[
\frac{1}{2} \kappa'(\phi_1) (\phi_2-\phi_1)  \le \kappa(\phi_2) .
     \]
\end{lemma}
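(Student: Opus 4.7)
The plan is to prove the inequality by exploiting convexity of $\kappa$ on $(0,1)$. The key observation is the partial fraction decomposition
\[
\kappa(\phi) \;=\; \frac{1}{36\phi(1-\phi)} \;=\; \frac{1}{36}\!\left(\frac{1}{\phi} + \frac{1}{1-\phi}\right),
\]
which exhibits $\kappa$ as a sum of two functions each of which is convex on $(0,1)$, since $(1/\phi)'' = 2/\phi^3>0$ and $(1/(1-\phi))'' = 2/(1-\phi)^3>0$. Hence $\kappa$ itself is convex on $(0,1)$. First I would record this as a short preliminary lemma (or simply as the opening computation).

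Second, once convexity is in hand, I would apply the tangent line (first-order) inequality for a convex function: for every $\phi_1,\phi_2\in(0,1)$,
\[
\kappa(\phi_2) \;\ge\; \kappa(\phi_1) + \kappa'(\phi_1)(\phi_2-\phi_1).
\]
Rearranging gives $\kappa'(\phi_1)(\phi_2-\phi_1) \le \kappa(\phi_2) - \kappa(\phi_1)$. Since $\kappa(\phi_1)>0$ whenever $\phi_1\in(0,1)$, we conclude
\[
\kappa'(\phi_1)(\phi_2-\phi_1) \;\le\; \kappa(\phi_2),
\]
which is a slightly stronger inequality than what the statement requires. Dividing by $2$ and noting that the right-hand side $\kappa(\phi_2)$ is positive (so shrinking the left-hand side by a factor of $1/2$ can only help) immediately yields the claim $\frac{1}{2}\kappa'(\phi_1)(\phi_2-\phi_1)\le\kappa(\phi_2)$.

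There is essentially no obstacle here; the only thing to be attentive to is the sign analysis in the borderline case where $\kappa'(\phi_1)(\phi_2-\phi_1)<0$ (which happens, e.g., when $\phi_1>\tfrac12$ and $\phi_2<\phi_1$, or vice versa). In that regime the inequality is trivial because the left-hand side is negative while $\kappa(\phi_2)>0$; in the complementary regime it follows from the convexity bound above. I would mention this dichotomy briefly for completeness, but the uniform convexity-based proof handles both cases simultaneously. The factor $\tfrac{1}{2}$ in the statement is therefore not tight from the $\kappa$-analysis alone; its presence is dictated by the way this lemma will be combined with the discrete energy estimates in the positivity-preserving argument, rather than by any sharpness in $\kappa$ itself.
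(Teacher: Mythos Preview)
Your proof is correct and follows essentially the same route as the paper: both arguments hinge on the convexity of $\kappa$ (tangent-line inequality) together with the positivity of $\kappa$, and both invoke the sign dichotomy on $\kappa'(\phi_1)(\phi_2-\phi_1)$. The only minor differences are that you supply an explicit verification of convexity via the partial-fraction decomposition (the paper simply cites convexity of $\kappa$), and you observe the slightly stronger bound $\kappa'(\phi_1)(\phi_2-\phi_1)\le\kappa(\phi_2)$ before passing to the halved version, whereas the paper splits into the two sign cases from the outset.
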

\begin{proof}
The proof will be divided into two cases:

Case 1: If $\kappa'(\phi_1) ( \phi_2-\phi_1 ) \le 0$, we see that
\begin{eqnarray}
   \frac12 \kappa'(\phi_1) ( \phi_2-\phi_1 )  \le 0 \le \kappa(\phi_2) ,
	\end{eqnarray}
due to the fact that $\kappa (\phi_2) > 0$, for any $0 < \phi_2 < 1$.
	
Case 2: If $\kappa'(\phi_1) ( \phi_2-\phi_1 ) \ge 0$, we have
\begin{eqnarray}
   \frac12 \kappa'(\phi_1) ( \phi_2-\phi_1 )
   \le \kappa'(\phi_1) ( \phi_2-\phi_1 )  \le \kappa(\phi_2) - \kappa(\phi_1)
   \le \kappa(\phi_2) ,
\end{eqnarray}
in which the second step is based on the convexity of $\kappa(\phi)$ (in terms of $\phi$), and the last step comes from the fact that $\kappa(\phi_1) > 0$.

A combination of these two cases yields the desired result.
\end{proof}

The framework of the following positivity-preserving property of the numerical solution is similar to that in \cite{Chen2017A}.
\begin{theorem}
	\label{CH-positivity}
 Given $\phi^n,\phi^{n-1}\in\mathcal{C}_{\rm per}$, with $0 < \phi^n,\phi^{n-1} < \nicefrac{1}{\rho}$, then $\overline{\phi^n }< \nicefrac{1}{\rho}, \overline{\phi^{n-1}} < \nicefrac{1}{\rho}$, there exists a unique solution $\phi^{n+1}\in\mathcal{C}_{\rm per}$ to the scheme \eqref{scheme-CH_LOG-1}-\eqref{scheme-CH_initial}, with $\overline{\phi^n} = \overline{\phi^{n+1}}$ and  $0 < \phi^{n+1} < \nicefrac{1}{\rho}$.
\end{theorem}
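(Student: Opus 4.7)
The plan is to recast the scheme as the Euler--Lagrange equation of a strictly convex, mean-constrained minimization problem, exhibit a unique minimizer on the closed admissible set, and then rule out boundary minimizers using the singularity of the logarithm together with Lemma \ref{CH-positivity-Lem-1}. Set $\h{\phi}:=\tfrac{1}{3}(4\phi^n-\phi^{n-1})$ and define
\begin{align*}
J(\phi) &= \frac{3}{4\dt}\nrm{\phi-\h{\phi}}_{-1,h}^2 + F_S(\phi) + F_{K_1}(\phi) + F_{K_2}(\phi) \\
&\quad + \frac{A\dt}{2}\nrm{\nabh(\phi-\phi^n)}_2^2 - \ciptwo{\delta_\phi F_e(2\phi^n-\phi^{n-1})}{\phi},
\end{align*}
on the closed admissible set $\mathcal{A}_h := \{\phi\in\mathcal{C}_{\rm per} : 0\le \phi_{i,j}\le \nicefrac{1}{\rho},\ \overline{\phi}=\overline{\phi^n}\}$. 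A direct variation, identifying the Lagrange multiplier for the mean constraint with the additive constant of $\mu^{n+1}$, shows that any critical point of $J$ in the open interior $\mathcal{A}_h^\circ$ satisfies \eqref{scheme-CH_LOG-1}--\eqref{scheme-mu-0}. By Proposition \ref{convex splitting} combined with strict convexity of $\nrm{\cdot}_{-1,h}^2$ and $\nrm{\nabh(\cdot)}_2^2$ on the mean-constrained subspace, $J$ is strictly convex on $\mathcal{A}_h^\circ$. Extending $\kappa(0)=+\infty$, $J$ is lower semicontinuous on the compact set $\mathcal{A}_h$ and is finite at the interior constant function $\overline{\phi^n}\cdot\mathbf{1}$, so a minimizer $\phi^\star$ exists.

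I next argue that $\phi^\star\in \mathcal{A}_h^\circ$ by ruling out each boundary face. Suppose first that $\phi^\star_{i_0,j_0}=0$. Because $\overline{\phi^\star}=\overline{\phi^n}>0$, the zero set of $\phi^\star$ is a proper subset of the periodic grid; by connectivity of the mesh there is a zero node with a strictly positive nearest neighbor. At such a node the averaged squared gradient $a_x((D_x\phi^\star)^2)+a_y((D_y\phi^\star)^2)$ is strictly positive while $\kappa(\phi^\star_{i_0,j_0})=+\infty$, so $F_{K_1}(\phi^\star)=+\infty$ and $J(\phi^\star)=+\infty$, contradicting minimality.

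The remaining case $\phi^\star_{i_0,j_0}=\nicefrac{1}{\rho}$ is more delicate, as $J(\phi^\star)$ is finite. Pick $(i_1,j_1)$ with $\phi^\star_{i_1,j_1}<\nicefrac{1}{\rho}$ (provided by mass conservation), form the mean-zero direction $\psi$ equal to $-1$ at $(i_0,j_0)$, $+1$ at $(i_1,j_1)$ and $0$ elsewhere, and examine
\[
\lim_{s\to 0^+}\frac{J(\phi^\star+s\psi)-J(\phi^\star)}{s}.
\]
The $(1-\rho\phi)\ln(1-\rho\phi)$ summand of $S$ at $(i_0,j_0)$ produces $\lim_{s\to 0^+}s^{-1}[S(\nicefrac{1}{\rho}-s)-S(\nicefrac{1}{\rho})]=-\infty$. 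The Douglas--Dupont regularization $\tfrac{A\dt}{2}\nrm{\nabh(\phi-\phi^n)}_2^2$, the $\nrm{\cdot}_{-1,h}^2$ piece and the explicit linear term contribute only finite increments, reflecting the introductory remark that the logarithm varies faster than the linear regularization near the boundary. The subtle contribution is from $F_{K_1}$, whose directional derivative involves combinations of the form $\tfrac{1}{2}\kappa'(\phi^\star_{i_0,j_0})(\phi^\star_{i_0\pm 1,j_0}-\phi^\star_{i_0,j_0})$ and their $y$-direction analogues. Lemma \ref{CH-positivity-Lem-1} bounds each of these by $\kappa(\phi^\star_{i_0\pm 1,j_0})$, which is finite because the neighboring values lie in $(0,\nicefrac{1}{\rho})\subset(0,1)$. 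Consequently the $F_{K_1}$ increment stays finite and cannot absorb the singular $-\infty$ from $S$, the directional derivative equals $-\infty$, and $J(\phi^\star+s\psi)<J(\phi^\star)$ for small $s>0$, contradicting minimality.

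With both boundary cases excluded, $\phi^{n+1}:=\phi^\star\in\mathcal{A}_h^\circ$; strict convexity of $J$ yields uniqueness; the first-order optimality condition recovers the scheme \eqref{scheme-CH_LOG-1}--\eqref{scheme-mu-0}; and the built-in mean constraint delivers $\overline{\phi^{n+1}}=\overline{\phi^n}$. The main obstacle is the coupling between the deGennes coefficient and the logarithmic potential at the boundary: a simple connectivity argument handles the divergence of $\kappa$ at $\phi=0$, while Lemma \ref{CH-positivity-Lem-1} is the essential comparison that keeps the $F_{K_1}$ directional-derivative contributions bounded so they cannot cancel the singular logarithmic term in the $\phi=\nicefrac{1}{\rho}$ subcase.
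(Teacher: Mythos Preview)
Your argument is essentially correct but follows a genuinely different route from the paper, and you mislocate the role of Lemma~\ref{CH-positivity-Lem-1}. The paper works on the $\delta$-shrunken set $\mathring{A}_{h,\delta}$, picks the direction $\psi=\delta_{\vec{\alpha}_0}-\delta_{\vec{\alpha}_1}$ with $\vec{\alpha}_0,\vec{\alpha}_1$ the global \emph{minimum} and \emph{maximum} of $\phi^\star$, and treats both boundary faces symmetrically via the logarithmic terms; that specific choice of extremal points gives several contributions (the discrete Laplacians, the $\kappa'$ term at $\vec{\alpha}_0$, and the cross terms there) a favorable sign, and Lemma~\ref{CH-positivity-Lem-1} is invoked precisely to show that the remaining combination of $-\kappa'(\phi^\star_{\vec{\alpha}_1})|\nabla_h\phi^\star|^2$ with the $\vec{\alpha}_1$-cross terms is non-positive. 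By contrast, you extend $J$ to the closed set with $\kappa(0)=+\infty$, eliminate the $\phi=0$ face via the blowup of $\kappa$ together with a grid-connectivity argument, and eliminate $\phi=1/\rho$ via the singularity of $-\rho\ln(1-\rho\phi)$ alone. In your framework Lemma~\ref{CH-positivity-Lem-1} is actually \emph{unnecessary}: since $1/\rho\in(0,1)$, both $\kappa$ and $\kappa'$ are finite at $1/\rho$ (and at every node once the $\phi=0$ face is excluded), so the $F_{K_1}$ directional-derivative contributions are automatically bounded and cannot cancel the $-\infty$ coming from $S$; your description of those contributions as ``$\tfrac12\kappa'(\phi^\star_{i_0,j_0})(\phi^\star_{i_0\pm1,j_0}-\phi^\star_{i_0,j_0})$'' is also not quite their actual form (squared differences scaled by $h^{-2}$ appear), but this does not affect the conclusion. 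Your asymmetric treatment is a bit quicker for the $\phi=0$ face, while the paper's $\delta$-shrinking avoids having to formalize a lower-semicontinuous extension and manipulate $+\infty$ values.
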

\begin{proof}
First, we define $M=\frac{1}{3}(\overline{\phi^n}+ 2\overline{\phi}_{mod})=\overline{\phi}_0$.
The numerical solution of \eqref{scheme-CH_LOG-1}-\eqref{scheme-mu-0} is a minimizer of the following discrete energy functional:
     \begin{align*}
\mathcal{J}^{n,n-1} (\phi) :=& \frac{1}{12 \dt} \nrm{3\phi-4\phi^n+\phi^{n-1} }_{-1,h}^2 + \frac{1}{\tau}\ciptwo{ \phi }{ \ln \frac{\alpha \phi}{\tau} } + \frac{1}{N_1}\ciptwo{ \phi }{ \ln \frac{\beta \phi}{\tau} }
	\\
&  + \ciptwo{ 1-\rho\phi }{ \ln (1-\rho\phi) } + \ciptwo{ \kappa(\phi) }{a_x((D_x\phi)^2) + a_y((D_y\phi)^2) }
    \\
& - 2\rho\chi\ciptwo{\phi}{2\phi^n-\phi^{n-1}} + \frac{A \dt}{2}\nrm{\nabla_h(\phi-\phi^n)}_2^2,
	 \end{align*}
over the admissible set
	 \[
A_h := \left\{ \phi \in \mathcal{C}_{\rm per} \ \middle| \  0 \le \phi \le \nicefrac{1}{\rho},  \quad  \ciptwo{\phi-M}{1}=0 \right\} \subset \mathbb{R}^{N^2}.
	 \]
It is easy to observe that $\mathcal{J}^{n,n-1}$ is a strictly convex functional over this domain.

To facilitate the analysis below, we transform the minimization problem into an equivalent one
     \begin{eqnarray*}
&&\mathcal{F}^{n,n-1} (\varphi) := \mathcal{J}^{n,n-1} (\varphi + M)
	\\
&=&  \frac{1}{12 \dt} \nrm{3(\varphi+M)-4\phi^n+\phi^{n-1} }_{-1,h}^2
	\\
&& + \frac{1}{\tau}\ciptwo{ \varphi + M }{ \ln \frac{\alpha ( \varphi + M )}{\tau} } + \frac{1}{N_1}\ciptwo{ \varphi + M }{ \ln \frac{\beta ( \varphi + M )}{\tau} }
	\\
&&  + \ciptwo{ 1-\rho( \varphi + M ) }{ \ln (1-\rho( \varphi + M )) }
    \\
&&  + \ciptwo{ \kappa( \varphi + M ) }{a_x((D_x\varphi)^2) + a_y((D_y\varphi)^2) }
	\\
&& - 2\rho\chi\ciptwo{ \varphi + M }{2\phi^n-\phi^{n-1}}
 + \frac{A \dt}{2}\nrm{\nabla_h(\varphi + M-\phi^n)}_2^2,
	 \end{eqnarray*}
defined on the set
	 \[
\mathring{A}_h := \left\{ \varphi \in \mathring{\mathcal{C}}_{\rm per} \ \middle| \  -M \le \varphi \le \nicefrac{1}{\rho}-M  \right\} \subset \mathbb{R}^{N^2}.
	 \]
If $\varphi\in \mathring{A}_h$ minimizes $\mathcal{F}^{n,n-1}$, then $\phi := \varphi + M\in A_h$ minimizes $\mathcal{J}^{n,n-1}$, and \emph{vice versa}. Next, we prove that there exists a minimizer of $\mathcal{F}^{n,n-1}$ over the domain $\mathring{A}_h$. We consider the following closed domain: for $\delta\in (0,\hf)$,
     \[
\mathring{A}_{h,\delta} := \left\{ \varphi \in \mathring{\mathcal{C}}_{\rm per} \ \middle| \  \delta -M \le \varphi \le \nicefrac{1}{\rho}-\delta-M  \right\} \subset \mathbb{R}^{N^2}.
     \]
Since $\mathring{A}_{h,\delta}$ is a bounded, compact, and convex set in the subspace $\mathring{\mathcal{C}}_{\rm per}$, there exists a (not necessarily  unique) minimizer of $\mathcal{F}^{n,n-1}$ over $\mathring{A}_{h, \delta}$. The key point of the positivity analysis is that such a minimizer could not occur on the boundary of $\mathring{A}_{h,\delta}$, if $\delta$ is sufficiently small.

To get a contradiction, suppose that the minimizer of $\mathcal{F}^{n,n-1}$, call it $\varphi^\star$ occurs at a boundary point of $\mathring{A}_{h,\delta}$ and there is at least one grid point $\vec{\alpha}_0 = (i_0,j_0)$ such that $\varphi^\star_{\vec{\alpha}_0}+M = \delta$. Then the grid function $\varphi^\star$ has a global minimum at $\vec{\alpha}_0$. Suppose that $\vec{\alpha}_1 = (i_1,j_1)$ is a grid point at which $\varphi^\star$ achieves its maximum. By the fact that $\overline{\varphi^\star} = 0$, we have $\varphi^\star_{\vec{\alpha}_1}\geq 0$. It is obvious that
     \[
\nicefrac{1}{\rho}-\delta \ge \varphi^\star_{\vec{\alpha}_1}+M \ge M.
	 \]
Since $\mathcal{F}^{n,n-1}$ is smooth over $\mathring{A}_{h,\delta}$, for all $\psi\in \mathring{\mathcal{C}}_{\rm per}$, the directional derivative is
     \begin{align*}
 &d_s \mathcal{F}^{n,n-1}(\varphi^\star+s\psi)|_{s=0} \\
=& \   \ciptwo{\frac{1}{\tau} \ln \frac{\alpha (\varphi^\star+M)}{\tau} + \frac{1}{N_1} \ln \frac{\beta (\varphi^\star+M)}{\tau} - \rho \ln (1- \rho(\varphi^\star+M))}{\psi}
 \\
&+ \ciptwo{\frac{1}{\tau}+\frac{1}{N_1}-\rho}{\psi} +\frac{1}{2 \dt}\ciptwo{ \mathcal{L}^{-1}(3(\varphi^\star+M)-4\phi^n+\phi^{n-1})}{\psi}
   \\
&- \ciptwo{2\rho\chi(2\phi^n-\phi^{n-1})}{\psi}
 - A \dt \ciptwo{\Dh(\varphi^\star +M-\phi^n)}{\psi}
\\
&+ \ciptwo{\kappa'( \varphi^\star + M ) (  a_x((D_x\varphi^\star)^2) + a_y((D_y\varphi^\star)^2)  )}{\psi}
	\\
&+ h^2\sumij \kappa( \varphi^\star_{i,j} + M )\left(D_x \varphi^\star_{i+\hf,j} D_x \psi_{i+\hf,j} + D_x \varphi^\star_{i-\hf,j} D_x \psi_{i-\hf,j}\right)\\
&+ h^2\sumij \kappa( \varphi^\star_{i,j} +M )\left(D_y \varphi^\star_{i,j+\hf} D_y \psi_{i,j+\hf} + D_y \varphi^\star_{i,j-\hf} D_y \psi_{i,j-\hf}\right).
	 \end{align*}
Using the definition of the difference operators $D_x$, $D_y$ and discrete Laplacian operator $\Delta_h$, it is easy to get the following equivalent form
     \begin{align*}
 &d_s \mathcal{F}^{n,n-1}(\varphi^\star+s\psi)|_{s=0}\\
=&  \  \ciptwo{\frac{1}{\tau} \ln \frac{\alpha (\varphi^\star+M)}{\tau} + \frac{1}{N_1} \ln \frac{\beta (\varphi^\star+M)}{\tau} - \rho \ln (1- \rho(\varphi^\star+M))}{\psi}\\
&+ \ciptwo{\frac{1}{\tau}+\frac{1}{N_1}-\rho}{\psi}
+\frac{1}{2 \dt}\ciptwo{ \mathcal{L}^{-1}(3(\varphi^\star+M)-4\phi^n+\phi^{n-1})}{\psi}
   \\
&- \ciptwo{2\rho\chi(2\phi^n-\phi^{n-1})}{\psi}
 - A \dt \ciptwo{\Dh(\varphi^\star + M-\phi^n)}{\psi} \\
&+ \ciptwo{\kappa'( \varphi^\star + M ) (  a_x((D_x\varphi^\star)^2) + a_y((D_y\varphi^\star)^2)  )}{\psi}
	 -  \ciptwo{ \kappa( \varphi^\star + M ) \Delta_h \varphi^\star}{\psi}\\
&+ h\sumij \kappa( \varphi^\star_{i,j} + M )\left(D_x \varphi^\star_{i+\hf,j} \psi_{i+1,j} - D_x \varphi^\star_{i-\hf,j} \psi_{i-1,j}\right)\\
&+ h\sumij \kappa( \varphi^\star_{i,j} + M )\left(D_y \varphi^\star_{i,j+\hf} \psi_{i,j+1} - D_y \varphi^\star_{i,j-\hf} \psi_{i,j-1}\right).
	 \end{align*}
This time, let us pick the direction $\psi \in \mathring{\mathcal{C}}_{\rm per}$, such that
	 \[
\psi_{i,j} = \delta_{i,i_0}\delta_{j,j_0} - \delta_{i,i_1}\delta_{j,j_1} .
	 \]
Then the derivative may be expressed as
     \begin{align}
&\frac{1}{h^2} d_s \mathcal{F}^{n,n-1}(\varphi^\star+s\psi)|_{s=0}
    \nonumber
    \\
 =& \left(\frac{1}{\tau} \ln \frac{\alpha (\varphi^\star_{\vec{\alpha}_0}+M)}{\tau} + \frac{1}{N_1} \ln \frac{\beta (\varphi^\star_{\vec{\alpha}_0}+M)}{\tau}
- \rho \ln (1- \rho(\varphi^\star_{\vec{\alpha}_0}+M))\right)
	\nonumber
    \\
 -& \left(\frac{1}{\tau} \ln \frac{\alpha (\varphi^\star_{\vec{\alpha}_1}+M)}{\tau} + \frac{1}{N_1} \ln \frac{\beta (\varphi^\star_{\vec{\alpha}_1}+M)}{\tau} - \rho \ln (1- \rho(\varphi^\star_{\vec{\alpha}_1}+M))\right)
	\nonumber
    \\
-& 2\rho\chi \left( 2(\phi^n_{\vec{\alpha}_0} - \phi^n_{\vec{\alpha}_1})- (\phi^{n-1}_{\vec{\alpha}_0} - \phi^{n-1}_{\vec{\alpha}_1})\right)
- A \dt (\Dh\varphi^\star_{\vec{\alpha}_0} - \Dh\varphi^\star_{\vec{\alpha}_1}) + A \dt (\Dh\phi^n_{\vec{\alpha}_0} - \Dh\phi^n_{\vec{\alpha}_1})
	\nonumber
    \\
-& \left(\kappa( \varphi^\star_{\vec{\alpha}_0} + M ) \Delta_h \varphi^\star_{\vec{\alpha}_0} -  \kappa( \varphi^\star_{\vec{\alpha}_1} + M ) \Delta_h \varphi^\star_{\vec{\alpha}_1} \right)
	\nonumber
	\\
+&\frac{1}{2 \dt} \bigg(\mathcal{L}^{-1}(3(\varphi^\star+M)-4\phi^n+\phi^{n-1})_{\vec{\alpha}_0} -\mathcal{L}^{-1}(3(\varphi^\star+M)-4\phi^n+\phi^{n-1})_{\vec{\alpha}_1} \bigg)
    \nonumber
	\\
+&\kappa'( \varphi^\star_{\vec{\alpha}_0} + M )
( a_x((D_x\varphi^\star_{\vec{\alpha}_0})^2) + a_y((D_y\varphi^\star_{\vec{\alpha}_0})^2) )
 - \kappa'( \varphi^\star_{\vec{\alpha}_1} + M )
 ( a_x((D_x\varphi^\star_{\vec{\alpha}_1})^2) + a_y((D_y\varphi^\star_{\vec{\alpha}_1})^2) )
	\nonumber
    \\
+& \frac{1}{h} \left(\kappa( \varphi^\star_{i_0-1,j_0} + M ) D_x \varphi^\star_{i_0-\hf,j_0}
  -  \kappa( \varphi^\star_{i_0+1,j_0} + M ) D_x \varphi^\star_{i_0+\hf,j_0}\right)
	\nonumber
    \\
+& \frac{1}{h} \left(\kappa( \varphi^\star_{i_0,j_0-1} + M ) D_y \varphi^\star_{i_0,j_0-\hf}
  -  \kappa( \varphi^\star_{i_0,j_0+1} + M ) D_y \varphi^\star_{i_0,j_0+\hf}\right)
	\nonumber
    \\
-& \frac{1}{h} \left(\kappa( \varphi^\star_{i_1-1,j_1} + M ) D_x \varphi^\star_{i_1-\hf,j_1}
  -  \kappa( \varphi^\star_{i_1+1,j_1} + M ) D_x \varphi^\star_{i_1+\hf,j_1}\right)
	\nonumber
    \\
-& \frac{1}{h} \left(\kappa( \varphi^\star_{i_1,j_1-1} +M ) D_y \varphi^\star_{i_1,j_1-\hf}
  -  \kappa( \varphi^\star_{i_1,j_1+1} + M ) D_y \varphi^\star_{i_1,j_1+\hf}\right).
	\label{CH_LOG-positive-4}
	\end{align}
For simplicity, now let us write $\phi^\star := \varphi^\star +M$. Since  $\phi^\star_{\vec{\alpha}_0} =  \delta$ and $\phi^\star_{\vec{\alpha}_1} \ge M$, we have
     \begin{equation}
  \frac{1}{\tau} \ln \frac{\alpha \phi^\star_{\vec{\alpha}_0}}{\tau} + \frac{1}{N_1} \ln \frac{\beta \phi^\star_{\vec{\alpha}_0}}{\tau}
- \rho \ln (1- \rho \phi^\star_{\vec{\alpha}_0}) =   \frac{1}{\tau} \ln \frac{\alpha \delta}{\tau} + \frac{1}{N_1} \ln \frac{\beta \delta}{\tau}
- \rho \ln (1- \rho \delta),
	\label{CH_LOG-positive-5}
     \end{equation}
     \begin{equation}
    \frac{1}{\tau} \ln \frac{\alpha \phi^\star_{\vec{\alpha}_1}}{\tau} + \frac{1}{N_1} \ln \frac{\beta \phi^\star_{\vec{\alpha}_1}}{\tau}
- \rho \ln (1- \rho \phi^\star_{\vec{\alpha}_1}) \ge   \frac{1}{\tau} \ln \frac{\alpha M}{\tau} + \frac{1}{N_1} \ln \frac{\beta M}{\tau}
- \rho \ln (1- \rho M).
     \end{equation}
Since $\phi^\star$ takes a  minimum at the grid point $\vec{\alpha}_0$, with $\phi^\star_{\vec{\alpha}_0} =  \delta \le \phi^\star_{i,j}$, for any $(i,j)$, and a maximum at the grid point $\vec{\alpha}_1$, with $\phi^\star_{\vec{\alpha}_1} \ge \phi^\star_{i,j}$, for any $(i,j)$,
     \begin{equation}
\Delta_h \phi^\star_{\vec{\alpha}_0} \ge 0 ,  \quad \Delta_h \phi^\star_{\vec{\alpha}_1} \le 0 .
	\label{CH_LOG-positive-6}
	\end{equation}
For the numerical solution $\phi^n$ at the previous time step, the priori assumption $0< \phi^n, \phi^{n-1} < \nicefrac{1}{\rho}$ indicates that
     \begin{equation}
\label{CH_LOG-positive-8-0}
-\frac{1}{\rho} < \phi^n_{\vec{\alpha}_0} - \phi^n_{\vec{\alpha}_1} < \frac{1}{\rho}
,\quad -\frac{1}{\rho} < \phi^{n-1}_{\vec{\alpha}_0} - \phi^{n-1}_{\vec{\alpha}_1} < \frac{1}{\rho}
	 \end{equation}
     \begin{equation}
\label{CH_LOG-positive-8-1}
-\frac{8}{\rho h^2} \le \Dh\phi^n_{\vec{\alpha}_0} - \Dh\phi^n_{\vec{\alpha}_1} \le \frac{8}{\rho h^2}.
     \end{equation}
For the seventh term appearing in (\ref{CH_LOG-positive-4}), we apply Lemma~\ref{CH-positivity-Lem-0} and obtain
     \begin{small}
	 \begin{equation}
-  C_1 \dt^{-1} \le\frac{1}{2 \dt} \left(\mathcal{L}^{-1}(3\phi^\star-4\phi^n+\phi^{n-1})_{\vec{\alpha}_0} -
 \mathcal{L}^{-1}(3\phi^\star-4\phi^n+\phi^{n-1})_{\vec{\alpha}_1} \right)  \le  C_1 \dt^{-1}.
	\label{CH_LOG-positive-9}
	 \end{equation}
     \end{small}
For the eighth, tenth and eleventh terms appearing in (\ref{CH_LOG-positive-4}) are non-positive.\\
For the ninth and the last two terms appearing in (\ref{CH_LOG-positive-4}), we apply Lemma~\ref{CH-positivity-Lem-1} and know that they are non-positive together.
Consequently,  a substitution of (\ref{CH_LOG-positive-5})-(\ref{CH_LOG-positive-9}) into (\ref{CH_LOG-positive-4}) yields the following bound on the directional derivative:
     \begin{eqnarray*}
&&\frac{1}{h^2} d_s \mathcal{F}^{n,n-1}(\varphi^\star+s\psi)|_{s=0}\\
&\le& \left(\frac{1}{\tau} \ln \frac{\alpha \delta}{\tau} + \frac{1}{N_1} \ln \frac{\beta \delta}{\tau}
- \rho \ln (1- \rho \delta)\right)+ 6\chi + \frac{8 A}{\rho} \frac{\dt}{h^2} +  C_1 \dt^{-1}
    \\
&&-\left(\frac{1}{\tau} \ln \frac{\alpha M}{\tau} + \frac{1}{N_1} \ln \frac{\beta M}{\tau} - \rho \ln (1- \rho M)\right)
    \\
&=& \left((\frac{1}{\tau} + \frac{1}{N_1}) \ln \delta - \rho \ln (1- \rho \delta)\right)+ 6\chi  + \frac{8 A}{\rho} \frac{\dt}{h^2} +  C_1 \dt^{-1}
    \\
&&- \left((\frac{1}{\tau} + \frac{1}{N_1}) \ln M - \rho \ln (1- \rho M)\right).
     \end{eqnarray*}
We denote $C_2 = 6\chi  + \frac{8 A}{\rho} \frac{\dt}{h^2} +  C_1 \dt^{-1}$. Note that $C_2$ is a constant for the fixed $\dt$ and $h$, though it becomes singular as $\dt \to 0$ or $h \to 0$. However, for any fixed $\dt$ and $h$, we may choose $\delta\in(0,\hf)$ sufficiently small so that
	 \begin{equation}
	 \small
\left((\frac{1}{\tau} + \frac{1}{N_1}) \ln \delta - \rho \ln (1- \rho \delta)\right)
- \left((\frac{1}{\tau} + \frac{1}{N_1}) \ln M - \rho \ln (1- \rho M)\right) + C_2 < 0 .  \label{CH_LOG-positive-11}
	 \end{equation}
This in turn shows that, provided $\delta$ satisfies (\ref{CH_LOG-positive-11}),
     \[
\frac{1}{h^2} d_s \mathcal{F}^{n,n-1}(\varphi^\star+s\psi)|_{s=0} < 0 .
     \]
As before, this contradicts the assumption that $\mathcal{F}^{n,n-1}$ has a minimum at $\varphi^\star$, since the directional derivative is negative in a direction pointing into the interior of $\mathring{A}_{h,\delta}$.

Using very similar arguments, we can also prove that the global minimum of $\mathcal{F}^{n,n-1}$ over $\mathring{A}_{h,\delta}$ could not occur at a boundary point $\varphi^\star$ such that  $\varphi^\star_{\vec{\alpha}_0}+M  = \nicefrac{1}{\rho}-\delta$, for some $\vec{\alpha}_0$, so that the grid function $\varphi^\star$ has a global maximum at $\vec{\alpha}_0$. The details are left to interested readers.

A combination of these two facts shows that, the global minimum of $\mathcal{F}^{n,n-1}$ over $\mathring{A}_{h,\delta}$ could only possibly occur at interior point $\varphi\in (\mathring{A}_{h,\delta})^{\rm o}\subset (\mathring{A}_h)^{\rm o}$. We conclude that there must be a solution $\phi = \varphi+M\in A_h$ that minimizes $\mathcal{J}^{n,n-1}$ over $A_h$, which is equivalent to the numerical solution of (\ref{scheme-CH_LOG-1})-(\ref{scheme-mu-0}). The existence of the numerical solution is established.

In addition, since $\mathcal{J}^{n,n-1}$ is a strictly convex function over $A_h$, the uniqueness analysis for this numerical solution is straightforward. The proof of Theorem~\ref{CH-positivity} is complete.
\end{proof}
\section{Unconditional energy stability} \label{sec:energy stability}

\begin{theorem}
    For $n\geq 1$, we define the modified discrete energy as
     \[
E_h (\phi^{n+1},\phi^n) := F(\phi^{n+1}) + \frac{1}{4\dt} \nrm{\phi^{n+1}-\phi^n}_{-1,h}^2 + \chi \rho \nrm{\phi^{n+1}-\phi^n}_2^2,
     \]
and suppose $A\geq \chi^2 \rho^2$. Then the numerical scheme \eqref{scheme-CH_LOG-1}-\eqref{scheme-mu-0} has the energy-decay property
     \begin{equation}
E_h (\phi^{n+1},\phi^n) + \dt \left(1 - \frac{\chi^2 \rho^2}{A}\right) \nrm{ \frac{\phi^{n+1}-\phi^n}{\dt} }_{-1,h}^2 \leq E_h (\phi^n,\phi^{n-1}).	\label{CH-discrete energy}
     \end{equation}
\end{theorem}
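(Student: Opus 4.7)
The plan is to combine convex splitting with the BDF2 polarization identity in $H_h^{-1}$, and then absorb the cross term arising from the Adams--Bashforth extrapolation of the concave part using the Douglas--Dupont regularization together with an $\ell^2$--$H_h^{-1}$ interpolation inequality. By the preceding proposition, $F_c = F_S + F_{K_1} + F_{K_2}$ and $F_e = F_H$ are both convex on $\mathcal{C}_{\rm per}$. Writing $d^{n+1} := \phi^{n+1} - \phi^n$ and applying the subgradient inequality for $F_c$ at $\phi^{n+1}$ and for $F_e$ at $\phi^n$ gives
\begin{equation*}
F(\phi^{n+1}) - F(\phi^n) \le \ciptwo{\delta_\phi F_c(\phi^{n+1}) - \delta_\phi F_e(\phi^n)}{d^{n+1}}.
\end{equation*}
Substituting from \eqref{scheme-mu-0} and using that $\delta_\phi F_e(\phi) = -H'(\phi) = 2\chi\rho\phi - \chi$ is affine, so that $\delta_\phi F_e(2\phi^n - \phi^{n-1}) - \delta_\phi F_e(\phi^n) = 2\chi\rho\,d^n$, yields
\begin{equation*}
F(\phi^{n+1}) - F(\phi^n) \le \ciptwo{\mu^{n+1}}{d^{n+1}} + 2\chi\rho\ciptwo{d^n}{d^{n+1}} - A\dt\,\nrm{\nabla_h d^{n+1}}_2^2.
\end{equation*}

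Mass conservation makes $3\phi^{n+1} - 4\phi^n + \phi^{n-1} = 3d^{n+1} - d^n$ mean-zero, so inverting \eqref{scheme-CH_LOG-1} via $\mathcal{L}^{-1}$ and using that $d^{n+1}$ is also mean-zero yields $\ciptwo{\mu^{n+1}}{d^{n+1}} = -\tfrac{1}{2\dt}\cipgen{3d^{n+1} - d^n}{d^{n+1}}{-1,h}$. Expanding $\cipgen{d^n}{d^{n+1}}{-1,h}$ by the polarization identity, and similarly expanding $2\chi\rho\ciptwo{d^n}{d^{n+1}}$ in $\ell^2$ together with the telescoping contribution $\chi\rho(\nrm{d^{n+1}}_2^2 - \nrm{d^n}_2^2)$ coming from the $\chi\rho\nrm{\cdot}_2^2$ piece of $E_h$, the difference of modified energies reduces to
\begin{align*}
E_h(\phi^{n+1},\phi^n) - E_h(\phi^n,\phi^{n-1}) \le\; &-\tfrac{1}{\dt}\nrm{d^{n+1}}_{-1,h}^2 - \tfrac{1}{4\dt}\nrm{d^{n+1} - d^n}_{-1,h}^2 \\
&- A\dt\,\nrm{\nabla_h d^{n+1}}_2^2 + 2\chi\rho\nrm{d^{n+1}}_2^2 - \chi\rho\nrm{d^{n+1} - d^n}_2^2.
\end{align*}

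The only remaining positive term $2\chi\rho\nrm{d^{n+1}}_2^2$ is controlled by writing $d^{n+1} = -\Delta_h\mathcal{L}^{-1}d^{n+1}$, summing by parts via Lemma~\ref{lemma1}, and applying Cauchy--Schwarz, which yields the interpolation $\nrm{d^{n+1}}_2^2 = \eipvec{\nabla_h \mathcal{L}^{-1} d^{n+1}}{\nabla_h d^{n+1}} \le \nrm{d^{n+1}}_{-1,h}\,\nrm{\nabla_h d^{n+1}}_2$. Young's inequality with weight $A\dt$ then produces $2\chi\rho\nrm{d^{n+1}}_2^2 \le \tfrac{\chi^2\rho^2}{A\dt}\nrm{d^{n+1}}_{-1,h}^2 + A\dt\,\nrm{\nabla_h d^{n+1}}_2^2$, exactly cancelling the Douglas--Dupont dissipation and leaving
\begin{equation*}
E_h(\phi^{n+1},\phi^n) - E_h(\phi^n,\phi^{n-1}) \le -\tfrac{1}{\dt}\left(1 - \tfrac{\chi^2\rho^2}{A}\right)\nrm{d^{n+1}}_{-1,h}^2 - \tfrac{1}{4\dt}\nrm{d^{n+1} - d^n}_{-1,h}^2 - \chi\rho\nrm{d^{n+1} - d^n}_2^2.
\end{equation*}
Dropping the two non-positive rightmost terms and rewriting $\tfrac{1}{\dt}\nrm{d^{n+1}}_{-1,h}^2 = \dt\,\nrm{d^{n+1}/\dt}_{-1,h}^2$ gives \eqref{CH-discrete energy} under $A \ge \chi^2\rho^2$.

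The principal obstacle is the cross term $2\chi\rho\ciptwo{d^n}{d^{n+1}}$ produced by the explicit/extrapolated treatment of the concave part: it cannot be absorbed by the $H_h^{-1}$ dissipation alone. Balancing it requires simultaneously invoking the $\ell^2$ polarization, which is precisely what motivates including the auxiliary $\chi\rho\nrm{\phi^{n+1}-\phi^n}_2^2$ term in the modified energy $E_h$, and the $H_h^1$ dissipation supplied by the Douglas--Dupont regularization, with the threshold $A\ge\chi^2\rho^2$ emerging naturally from the Young's inequality that closes the argument.
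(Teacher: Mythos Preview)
Your proof is correct and follows essentially the same route as the paper's: both test the scheme against $d^{n+1}=\phi^{n+1}-\phi^n$ in $H_h^{-1}$, invoke the BDF2 polarization identity for the time-derivative term, use the convex-splitting inequality $\ciptwo{\delta_\phi F_c(\phi^{n+1})-\delta_\phi F_e(\phi^n)}{d^{n+1}}\ge F(\phi^{n+1})-F(\phi^n)$, expand the extrapolation cross term $2\chi\rho\ciptwo{d^n}{d^{n+1}}$ via $\ell^2$ polarization, and then close with the interpolation $\nrm{d^{n+1}}_2^2\le\nrm{d^{n+1}}_{-1,h}\nrm{\nabla_h d^{n+1}}_2$ together with Young's inequality weighted so that the Douglas--Dupont term is exactly absorbed. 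Your organization (starting from the convex-splitting bound and substituting $\mu^{n+1}$ afterward) is slightly different from the paper's (which begins by pairing the full scheme with $\mathcal{L}^{-1}d^{n+1}$), but the computations and the key estimates are identical.
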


\begin{proof}
Due to the mass conservation,  $ \mathcal{L}^{-1} (\phi^{n+1}-\phi^n)$ is well-defined. Taking a discrete inner product with (\ref{scheme-CH_LOG-1}) by $ \mathcal{L}^{-1} (\phi^{n+1}-\phi^n)$ , with (\ref{scheme-mu-0}) by $\phi^{n+1}-\phi^n$ yields
     \begin{align*}
0 =& \frac{1}{2\dt} \ciptwo{ 3\phi^{n+1} - 4\phi^n + \phi^{n-1} }{ \mathcal{L}^{-1} (\phi^{n+1}-\phi^n) }\\
+& \ciptwo{ \delta_\phi F_c(\phi^{n+1}) - \delta_\phi F_e(2\phi^n-\phi^{n-1}) }{ \phi^{n+1}-\phi^n } \\
+&  A \dt \nrm{\nabla_h(\phi^{n+1}-\phi^n)}_2^2 .
	 \end{align*}
The equivalent form is the following identity
     \begin{eqnarray}
0 &=& \frac{1}{2\dt} \ciptwo{ 3\phi^{n+1} - 4\phi^n + \phi^{n-1} }{ \mathcal{L}^{-1} (\phi^{n+1}-\phi^n) } \nonumber\\
&+& \ciptwo{ \delta_\phi F_c(\phi^{n+1}) - \delta_\phi F_e(\phi^n) }{ \phi^{n+1}-\phi^n }\nonumber
 \\
&-& \ciptwo{ \delta_\phi F_e(\phi^n) - \delta_\phi F_e(\phi^{n-1}) }{ \phi^{n+1}-\phi^n } \nonumber\\
&+& A \dt \nrm{\nabla_h(\phi^{n+1}-\phi^n)}_2^2
\label{CH_BDF} .
	 \end{eqnarray}
For the first term of the right hand side of \eqref{CH_BDF}, we have
     \begin{align}
&\frac{1}{2\dt} \ciptwo{ 3\phi^{n+1} - 4\phi^n + \phi^{n-1} }{ \mathcal{L}^{-1} (\phi^{n+1}-\phi^n) } \nonumber\\
=& \dt \left(\frac{5}{4}\nrm{\frac{\phi^{n+1}-\phi^n}{\dt}}_{-1,h}^2 - \frac{1}{4}\nrm{\frac{\phi^n-\phi^{n-1}}{\dt}}_{-1,h}^2\right)
\nonumber\\
&+ \frac{\dt^3}{4}\nrm{\frac{\phi^{n+1}-2\phi^n+\phi^{n-1}}{\dt^2}}_{-1,h}^2.
     \end{align}
For the second term of the right hand side of \eqref{CH_BDF}, we have
     \begin{equation}
\ciptwo{ \delta_\phi F_c(\phi^{n+1}) - \delta_\phi F_e(\phi^n) }{ \phi^{n+1}-\phi^n } \geq F(\phi^{n+1})-F(\phi^n).
     \end{equation}
For the third term of the right hand side of \eqref{CH_BDF}, we have
     \begin{eqnarray}
&&-\ciptwo{ \delta_\phi F_e(\phi^n) - \delta_\phi F_e(\phi^{n-1}) }{ \phi^{n+1}-\phi^n } \nonumber\\
&=&
-2 \chi \rho \ciptwo{\phi^n-\phi^{n-1}}{\phi^{n+1}-\phi^n}\nonumber\\
&=& - \chi \rho \left(\nrm{ \phi^n-\phi^{n-1} }_2^2 - \nrm{ \phi^{n+1}-2\phi^n+\phi^{n-1} }_2^2 + \nrm{ \phi^{n+1}-\phi^n }_2^2\right) .
     \end{eqnarray}
Going back \eqref{CH_BDF} and by simple calculation,  we arrive at
     \begin{eqnarray}
&&F(\phi^{n+1})-F(\phi^n) + \dt \left(\frac{5}{4}\nrm{\frac{\phi^{n+1}-\phi^n}{\dt}}_{-1,h}^2 - \frac{1}{4}\nrm{\frac{\phi^n-\phi^{n-1}}{\dt}}_{-1,h}^2\right)  \nonumber\\
&&- \chi \rho \left(\nrm{ \phi^n-\phi^{n-1} }_2^2  - \nrm{ \phi^{n+1}-\phi^n }_2^2\right) + A \dt \nrm{\nabla_h(\phi^{n+1}-\phi^n)}_2^2  \nonumber\\
&&\leq 2 \chi \rho  \nrm{ \phi^{n+1}-\phi^n }_2^2
\label{CH_BDF2}.
     \end{eqnarray}
For the right side of \eqref{CH_BDF2}, we have
     \begin{eqnarray}
\nrm{\phi^{n+1}-\phi^n}_2^2 &=& \nrm{\nabla_h(\phi^{n+1}-\phi^n)}_2 \cdot \nrm{\phi^{n+1}-\phi^n}_{-1,h}
   \nonumber
   \\
   &\leq& \frac{\dt}{2\alpha}\nrm{\nabla_h(\phi^{n+1}-\phi^n)}_2^2 + \frac{\alpha\dt}{2}\nrm{\frac{\phi^{n+1}-\phi^n}{\dt}}_{-1,h}^2.
     \end{eqnarray}
At last, we get
     \begin{eqnarray*}
&&F(\phi^{n+1}) + \chi \rho \nrm{ \phi^{n+1}-\phi^n }_2^2 + \frac{\dt}{4}\nrm{\frac{\phi^{n+1}-\phi^n}{\dt}}_{-1,h}^2
   \\
&&+ \dt (A-\frac{\chi \rho}{\alpha}) \nrm{\nabla_h(\phi^{n+1}-\phi^n)}_2^2 + \dt(1-\alpha \chi\rho)\nrm{\frac{\phi^{n+1}-\phi^n}{\dt}}_{-1,h}^2
   \\
&&\leq
F(\phi^n) + \chi \rho \nrm{ \phi^n-\phi^{n-1} }_2^2 + \frac{\dt}{4}\nrm{\frac{\phi^n-\phi^{n-1}}{\dt}}_{-1,h}^2.
     \end{eqnarray*}
Let $\alpha = \frac{\chi \rho}{A}$, when $A \geq \chi^2 \rho^2$, we have
     \begin{equation*}
    E_h (\phi^{n+1},\phi^n) + \dt \left(1 - \frac{\chi^2 \rho^2}{A}\right) \nrm{ \frac{\phi^{n+1}-\phi^n}{\dt} }_{-1,h}^2 \leq E_h (\phi^n,\phi^{n-1}).
    \end{equation*}

    \end{proof}
	\section{Optimal rate convergence analysis in $\ell^\infty (0,T; H_h^{-1}) \cap \ell^2 (0,T; H_h^1)$}
	\label{sec:convergence}
Let $\Phi$ be the exact solution for the $H^{-1}$ flow \eqref{CH equation-0}. With the initial data with sufficient regularity, we could assume that the exact solution has regularity of class $\mathcal{R}$:
     \begin{equation}
\Phi \in \mathcal{R} := H^2 \left(0,T; C_{\rm per}(\Omega)\right) \cap L^\infty \left(0,T; C^6_{\rm per}(\Omega)\right).
	\label{assumption:regularity.1}
     \end{equation}
Define $\Phi_N (\, \cdot \, ,t) := {\cal P}_N \Phi (\, \cdot \, ,t)$, the (spatial) Fourier projection of the exact solution into ${\cal B}^m$, the space of trigonometric polynomials of degree to and including  $K$.  The following projection approximation is standard:\\
if $\Phi\in L^\infty(0,T;H^\ell_{\rm per}(\Omega))$, for some $\ell\in\mathbb{N}$,
     \[
\nrm{\Phi_N - \Phi}_{L^\infty(0,T;H^k)}
   \le C h^{\ell-k} \nrm{\Phi }_{L^\infty(0,T;H^\ell)},  \quad  \ 0 \le k \le \ell .
	\label{projection-est-0}
     \]
By $\Phi_N^m$, $\Phi^m$ we denote $\Phi_N(\, \cdot \, , t_m)$ and $\Phi(\, \cdot \, , t_m)$, respectively, with $t_m = m\cdot \dt$. Since $\Phi_N \in {\cal B}^m$, the mass conservative property is available at the discrete level:
     \[
\overline{\Phi_N^m} = \frac{1}{|\Omega|}\int_\Omega \, \Phi_N ( \cdot, t_m) \, d {\bf x} = \frac{1}{|\Omega|}\int_\Omega \, \Phi_N ( \cdot, t_{m+1}) \, d {\bf x} = \overline{\Phi_N^{m+1}} ,  \quad \ m \in\mathbb{N}.
	\label{mass conserv-1}
     \]
On the other hand, the solution of (\ref{scheme-CH_LOG-1})-(\ref{scheme-mu-0}) is also mass conservative at the discrete level:
     \begin{equation}
\overline{\phi^m} = \overline{\phi^{m+1}} ,  \quad  \ m \in \mathbb{N} .
	\label{mass conserv-2}
     \end{equation}
As indicated before, we use the mass conservative projection for the initial data:  $\phi^0 = {\mathcal P}_h \Phi_N (\, \cdot \, , t=0)$, that is
     \[
\phi^0_{i,j} := \Phi_N (p_i, p_j, t=0).
	\label{initial data-0}
     \]
The error grid function is defined as
	\begin{equation}
\tilde{\phi}^m := \mathcal{P}_h \Phi_N^m - \phi^m ,  \quad  \ m \in \left\{ 0 ,1 , 2, 3, \cdots \right\} .
	\label{CH_LOG-error function-1}
	\end{equation}
Therefore, it follows that  $\overline{\tilde{\phi}^m} =0$, for any $m \in \left\{ 0 ,1 , 2, 3, \cdots \right\}$,  so that the discrete norm $\nrm{ \, \cdot \, }_{-1,h}$ is well defined for the error grid function.
Before proceeding into the convergence analysis, we introduce a new norm. Let $\Omega$ be an arbitrary bounded domain and $\mathbf{p} = [u,v]^T \in [L^2(\Omega)]^2$. We define $\nrm{\, \cdot \, }_{-1,G}$ to be a weighted inner product
     \begin{equation}
\nrm{ \mathbf{p} }_{-1,G}^2 = (\mathbf{p},G(-\Delta_h)^{-1}\mathbf{p}),\quad G = \left(
                                                                  \begin{array}{cc}
                                                                    \frac{1}{2} & -1 \\
                                                                    -1 & \frac{5}{2} \\
                                                                  \end{array}
                                                                \right).	
                                                                \nonumber
     \end{equation}
Since G is symmetric positive definite, the norm is well-defined. Moreover,
     \[
G = \left(
      \begin{array}{cc}
        \frac{1}{2} & -1 \\
        -1 & \frac{5}{2} \\
      \end{array}
    \right)
  =  \left(
      \begin{array}{cc}
        \frac{1}{2} & -1 \\
        -1 & 2 \\
      \end{array}
    \right)
  + \left(
      \begin{array}{cc}
        0 & 0 \\
        0 & \frac{1}{2} \\
      \end{array}
    \right)
  =:G_1 + G_2.
     \]
By the positive semi-definiteness of $G_1$, we immediately have
     \[
\nrm{ \mathbf{p} }_{-1,G}^2 = (\mathbf{p},(G_1+G_2)(-\Delta_h)^{-1}\mathbf{p}) \geq (\mathbf{p},G_2(-\Delta_h)^{-1}\mathbf{p}) = \frac{1}{2} \nrm{v}_{-1,h}^2.
     \]
In addition, for any $v_i \in L^2(\Omega),i = 0,1,2$, the following equality is valid:
     \begin{equation}
\left(\frac{3}{2}v_2-2v_1+\frac{1}{2}v_0, (-\Delta_h)^{-1}v_2\right) = \frac{1}{2}(\nrm{ \mathbf{p_2} }_{-1,G}^2 - \nrm{ \mathbf{p_1} }_{-1,G}^2) + \frac{\nrm{v_2-2v_1+v_0 }_{-1,h}^2}{4},
     \end{equation}
with $\mathbf{p_1} = [v_0,v_1]^T, \mathbf{p_2} = [v_1,v_2]^T$.
\begin{theorem}
	\label{thm:convergence}
Given initial data $\Phi(\, \cdot \, ,t=0) \in C^6_{\rm per}(\Omega)$, suppose the exact solution for the equation \eqref{CH equation-0} is of regularity class $\mathcal{R}$ in \eqref{assumption:regularity.1}. Then, provided that $\dt$ and $h$ are sufficiently small, for all positive integers $n$, such that $t_n=n \dt \le T$, we have
     \[
\nrm{ \tilde{\phi}^n }_{-1,h} +\left(\frac{\dt}{18}\sum_{k=1}^n\nrm{ \nabla_h \tilde{\phi}^k }_2^2\right)^{\nicefrac{1}{2}}\le C ( \dt^2 + h^2 ),
	\label{CH_LOG-convergence-0}
     \]
where $C>0$ is independent of $n$, $\dt$, and $h$.
	\end{theorem}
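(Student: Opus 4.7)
The plan is to derive a discrete error equation from the consistency identity satisfied by $\mathcal{P}_h\Phi_N$, pair it with $(-\Dh)^{-1}\tilde\phi^{n+1}$, and close a discrete Gronwall inequality built on the weighted $G$-norm BDF2 identity. A standard Taylor expansion (using the regularity class $\mathcal{R}$), together with the projection bound, shows that $\mathcal{P}_h\Phi_N$ satisfies \eqref{scheme-CH_LOG-1}--\eqref{scheme-mu-0} up to a truncation defect $\tau^{n+1}=\mathcal{O}(\dt^2+h^2)$: the BDF2 stencil, the Adams--Bashforth extrapolation of $F_e$, the centered discretization of the $\kappa$-diffusion, and the auxiliary term $A\dt\Dh(\phi^{n+1}-\phi^n)$ are all second order. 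Subtracting the scheme yields
\begin{equation*}
\tfrac{3\tilde\phi^{n+1}-4\tilde\phi^n+\tilde\phi^{n-1}}{2\dt}=\Dh\tilde\mu^{n+1}+\tau^{n+1},\qquad \overline{\tilde\phi^m}=0,
\end{equation*}
where $\tilde\mu^{n+1}$ collects the differences of the convex, concave, and regularization pieces of $\mu^{n+1}$.

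\textbf{Inner product and block decomposition.} Pairing with $(-\Dh)^{-1}\tilde\phi^{n+1}$ (well-defined by the mass-conservation identity \eqref{mass_conserv-1}) and invoking the weighted BDF2 identity at the end of Section~6 yields on the left
\begin{equation*}
\tfrac{1}{2\dt}\bigl(\nrm{\mathbf{p}^{n+1}}_{-1,G}^2-\nrm{\mathbf{p}^{n}}_{-1,G}^2\bigr)+\tfrac{1}{4\dt}\nrm{\tilde\phi^{n+1}-2\tilde\phi^n+\tilde\phi^{n-1}}_{-1,h}^2,
\end{equation*}
with $\mathbf{p}^{n+1}=[\tilde\phi^n,\tilde\phi^{n+1}]^T$; the $G$-norm dominates $\tfrac12\nrm{\tilde\phi^{n+1}}_{-1,h}^2$, supplying the $\ell^\infty(H_h^{-1})$ part of the estimate. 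On the right, $-\ciptwo{\tilde\mu^{n+1}}{\tilde\phi^{n+1}}+\ciptwo{\tau^{n+1}}{(-\Dh)^{-1}\tilde\phi^{n+1}}$ splits into four pieces. (i) I keep the three logarithmic terms inside $\delta_\phi F_S$ together and use monotonicity of $S'$ on $(0,1/\rho)$: by Theorem~\ref{CH-positivity} $\phi^{n+1}\in(0,1/\rho)$ and by the point-wise regularity of $\Phi$ also $\mathcal{P}_h\Phi_N^{n+1}\in(0,1/\rho)$, hence $\ciptwo{S'(\mathcal{P}_h\Phi_N^{n+1})-S'(\phi^{n+1})}{\tilde\phi^{n+1}}\ge 0$. (ii) The variable-coefficient diffusion from $F_{K_1}$ gives a non-negative inner product by joint convexity of $K_1(u,v)$ (Proposition~\ref{convex splitting}(3)). (iii) The isolated $F_{K_2}$ part contributes $\delta_\phi F_{K_2}=-\tfrac{1}{18}\Dh$, whose pairing produces the coercive $\tfrac{1}{18}\nrm{\nabh\tilde\phi^{n+1}}_2^2$ that appears in the target norm. (iv) The Douglas--Dupont regularizer yields $\tfrac{A\dt}{2}(\nrm{\nabh\tilde\phi^{n+1}}_2^2-\nrm{\nabh\tilde\phi^n}_2^2)$ after Cauchy--Schwarz, and telescopes harmlessly.

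\textbf{Explicit block, truncation, and closure.} The principal obstacle is the explicit concave block $-2\chi\rho\ciptwo{2\tilde\phi^n-\tilde\phi^{n-1}}{\tilde\phi^{n+1}}$: being linear and explicit, it cannot be absorbed by any monotonicity, and the only coercive gradient quantity on the left is the single $\tfrac{1}{18}\nrm{\nabh\tilde\phi^{n+1}}_2^2$ extracted by peeling $F_{K_2}$ off $F_{K_1}$ in Proposition~\ref{convex splitting}(3). I would control it via the interpolation $\nrm{v}_2^2\le\nrm{\nabh v}_2\nrm{v}_{-1,h}$ followed by Young's inequality, absorbing the $\nrm{\nabh\cdot}_2^2$ piece into the $\tfrac{1}{18}$-coefficient term (with a small reserve retained for the $\ell^2(H_h^1)$ estimate) and relegating the remaining $\nrm{\tilde\phi^n}_{-1,h}^2$ and $\nrm{\tilde\phi^{n-1}}_{-1,h}^2$ factors to the Gronwall side. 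The truncation pairing is standard: $|\ciptwo{\tau^{n+1}}{(-\Dh)^{-1}\tilde\phi^{n+1}}|\le C(\dt^2+h^2)^2+\tfrac{1}{4}\nrm{\tilde\phi^{n+1}}_{-1,h}^2$. Summing over $n$ and applying the discrete Gronwall lemma delivers \eqref{CH_LOG-convergence-0}; the delicate balance between the unique coercive $\tfrac{1}{18}\nrm{\nabh\cdot}_2^2$, the explicit extrapolation error, and the three-level $G$-weighted $H_h^{-1}$ structure is what I expect to demand the most care.
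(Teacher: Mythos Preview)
Your proposal is correct and follows essentially the same route as the paper: the $G$-weighted BDF2 identity for the time-difference term, convexity-based sign control for the $F_S$ and $F_{K_1}$ blocks, the $F_{K_2}$ split to extract the coercive $\tfrac{1}{18}\nrm{\nabh\tilde\phi^{n+1}}_2^2$, telescoping of the Douglas--Dupont regularizer, and a discrete Gronwall closure. The only cosmetic difference is that for the explicit concave piece the paper invokes the duality bound $|\ciptwo{\tilde\phi^k}{\tilde\phi^{n+1}}|\le\nrm{\tilde\phi^k}_{-1,h}\nrm{\nabh\tilde\phi^{n+1}}_2$ directly (rather than the $L^2$ interpolation $\nrm{v}_2^2\le\nrm{\nabh v}_2\nrm{v}_{-1,h}$ you cite), arriving at precisely the split you describe.
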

\begin{proof}
A careful consistent analysis indicates the following truncation error estimate:
\begin{eqnarray}
   \frac{3\Phi_N^{n+1} - 4\Phi_N^n + \Phi_N^{n-1}}{2 \dt}  &=& \Delta_h
 \Bigl( \delta_\phi F_S(\Phi_N^{n+1}) + \delta_\phi F_{K_1}(\Phi_N^{n+1})+ \delta_\phi F_{K_2}(\Phi_N^{n+1})
    \nonumber
\\
  &&  \quad
  - \delta_\phi F_H(2\Phi_N^n-\Phi_N^{n-1}) - A \dt \Dh(\Phi_N^{n+1}-\Phi_N^n) \Bigr)
      \nonumber
\\
  &&  \quad
  + \tau^n ,
	\label{CH_LOG-consistency-1}
	\end{eqnarray}
with $\| \tau^n \|_{-1,h} \le C (\dt^2 + h^2)$. Observe that in~\eqref{CH_LOG-consistency-1}, and from this point forward, we drop the operator $\mathcal{P}_h$, which should appear in front of $\Phi_N$, for simplicity.

Subtracting the numerical schemes \eqref{scheme-CH_LOG-1}-\eqref{scheme-mu-0} from (\ref{CH_LOG-consistency-1}) gives
      \begin{align}
   \frac{3\tilde{\phi}^{n+1} - 4\tilde{\phi}^n +\tilde{\phi}^{n-1}}{2\dt}
=& \Delta_h
 \Bigl( (\delta_\phi F_S(\Phi_N^{n+1}) - \delta_\phi F_S(\phi^{n+1})) + (\delta_\phi F_{K_1}(\Phi_N^{n+1})- \delta_\phi F_{K_1}(\phi^{n+1}))
  \nonumber
\\
  &  \quad \quad
  + (\delta_\phi F_{K_2}(\Phi_N^{n+1})- \delta_\phi F_{K_2}(\phi^{n+1}))  - A \dt \Dh(\tilde{\phi}^{n+1}-\tilde{\phi}^n)
  \nonumber
\\
  & \quad \quad
-  (\delta_\phi F_H(2\Phi_N^n-\Phi_N^{n-1})- \delta_\phi F_H(2\phi^n-\phi^{n-1}))  \Bigr)
    + \tau^n .
	\label{CH_LOG-consistency-2}
	  \end{align}
Since the numerical error function has zero-mean, we see that $\mathcal{L}^{-1} \tilde{\phi}^n$ is well-defined, for any $n \ge 0$. Taking a discrete inner product with \eqref{CH_LOG-consistency-2} by $ \mathcal{L}^{-1} \tilde{\phi}^{n+1}$ yields
      \begin{eqnarray}
&&\nrm{ \mathbf{p}^{n+1} }_{-1,G}^2 - \nrm{ \mathbf{p}^n }_{-1,G}^2 + \frac{1}{2}\nrm{\tilde{\phi}^{n+1} - 2\tilde{\phi}^n+ \tilde{\phi}^{n-1}}_{-1,h}^2
	\nonumber
	\\
&& +2 \dt \ciptwo{\delta_\phi F_S(\Phi_N^{n+1}) - \delta_\phi F_S(\phi^{n+1}) }{ \tilde{\phi}^{n+1} }
	\nonumber
	\\
&&+ 2 \dt \ciptwo{\delta_\phi F_{K_1}(\Phi_N^{n+1})- \delta_\phi F_{K_1}(\phi^{n+1})  }{ \tilde{\phi}^{n+1} }
	\nonumber
	\\
&&+ 2 \dt \ciptwo{ \delta_\phi F_{K_2}(\Phi_N^{n+1})- \delta_\phi F_{K_2}(\phi^{n+1}) }{ \tilde{\phi}^{n+1} }
	\nonumber
	\\
&&- 2 A \dt^2\ciptwo{\Dh(\tilde{\phi}^{n+1}-\tilde{\phi}^n)}{\tilde{\phi}^{n+1}}
	\nonumber
	\\
&=&  4 \chi \rho \dt \ciptwo{ 2\tilde{\phi}^n - \tilde{\phi}^{n-1} }{ \tilde{\phi}^{n+1} }  + 2 \dt \ciptwo{ \tau^n }{ \mathcal{L}^{-1}\tilde{\phi}^{n+1} },
	\label{CH_LOG-convergence-1}
	  \end{eqnarray}
where $\mathbf{p}^{n+1} = ( \tilde{\phi}^n, \tilde{\phi}^{n+1})$.\\
The estimate for the term associated with $F_{K_2}$ is straightforward:
\begin{eqnarray}
&2 \dt \ciptwo{ \delta_\phi F_{K_2}(\Phi_N^{n+1})- \delta_\phi F_{K_2}(\phi^{n+1}) }{ \tilde{\phi}^{n+1} } \\ \nonumber
&= 2 \dt \ciptwo{ - \frac{1}{18}\Delta_h \tilde{\phi}^{n+1}} { \tilde{\phi}^{n+1}}  = \frac{1}{9} \dt \| \nabla_h \tilde{\phi}^{n+1} \|_2^2 .  \label{CH_LOG-convergence-2}
\end{eqnarray}
For the $F_S$ and $F_{K_1}$ terms, the fact that $F_S$ and $F_{K_1}$ are both convex yields the following result:
\begin{eqnarray}
 \ciptwo{\delta_\phi F_S(\Phi_N^{n+1}) - \delta_\phi F_S(\phi^{n+1}) }{ \tilde{\phi}^{n+1} }&\ge& 0 ,\\
	\label{CH_LOG-convergence-3-1}
\ciptwo{\delta_\phi F_{K_1}(\Phi_N^{n+1})- \delta_\phi F_{K_1}(\phi^{n+1})  }{ \tilde{\phi}^{n+1} }  &\ge& 0.
	\label{CH_LOG-convergence-3-2}
	\end{eqnarray}
For the artificial term, we have
\begin{equation}
2\ciptwo{\nabla_h(\tilde{\phi}^{n+1}-\tilde{\phi}^n)}{\nabla_h\tilde{\phi}^{n+1}} = \nrm{\nabla_h\tilde{\phi}^{n+1}}_2^2 -\nrm{\nabla_h\tilde{\phi}^n}_2^2 + \nrm{\nabla_h(\tilde{\phi}^{n+1}-\tilde{\phi}^n)}_2^2.
	\label{CH_LOG-convergence-4}
\end{equation}
For the inner product associated with the concave part, the following estimate is derived:
\begin{small}
	\begin{eqnarray}
 &&4 \chi \rho \ciptwo{ 2\tilde{\phi}^n - \tilde{\phi}^{n-1} }{ \tilde{\phi}^{n+1} }
 	\nonumber
	\\
  &=& 8\chi \rho \ciptwo{ \tilde{\phi}^n }{ \tilde{\phi}^{n+1} }  -  4 \chi \rho \ciptwo{\tilde{\phi}^{n-1} }{ \tilde{\phi}^{n+1} }
	\nonumber
	\\
 &\le& 8 \chi \rho \nrm{ \tilde{\phi}^n }_{-1,h}  \nrm{ \nabla_h \tilde{\phi}^{n+1} }_2
 + 4 \chi \rho \nrm{ \tilde{\phi}^{n-1} }_{-1,h}  \nrm{ \nabla_h \tilde{\phi}^{n+1} }_2
	\\
& \le& 32 \chi^2 \rho^2 \varepsilon_1^{-2} \nrm{ \tilde{\phi}^n }_{-1,h}^2  + \frac{\varepsilon_1^2}{2} \nrm{ \nabla_h \tilde{\phi}^{n+1} }_2^2 + 8 \chi^2 \rho^2 \varepsilon_2^{-2} \nrm{ \tilde{\phi}^{n-1} }_{-1,h}^2  + \frac{\varepsilon_2^2}{2} \nrm{ \nabla_h \tilde{\phi}^{n+1} }_2^2	\nonumber.
	\label{CH_LOG-convergence-5}
	\end{eqnarray}
\end{small}
The term associated with the local truncation error can be controlled in a standard way:
\begin{small}
	\begin{equation}
2 \ciptwo{\tau^n }{ \mathcal{L}^{-1}\tilde{\phi}^{n+1} }  \le  2 \nrm{ \tau^n }_{-1,h}  \nrm{ \tilde{\phi}^{n+1} }_{-1,h}  \le  2 \varepsilon^{-2} \nrm{ \tau^n }_{-1,h}^2 + \frac{\varepsilon^2}{2}  \nrm{ \tilde{\phi}^{n+1} }_{-1,h}^2 .
	\label{CH_LOG-convergence-6}
	\end{equation}
\end{small}
Going back to \eqref{CH_LOG-convergence-1}, when $n \geq 1$, we arrive

	\begin{align}
&\nrm{ \mathbf{p}^{n+1} }_{-1,G}^2 - \nrm{ \mathbf{p}^n }_{-1,G}^2 	
 + A \dt^2\nrm{\nabla_h\tilde{\phi}^{n+1}}_2^2 - A \dt^2\nrm{\nabla_h\tilde{\phi}^n}_2^2
+ \frac{\dt}{9}\nrm{ \nabla_h \tilde{\phi}^{n+1} }_2^2
	\nonumber
    \\
\leq &\frac{32\chi^2 \rho^2}{\varepsilon_1^2}\dt\nrm{ \tilde{\phi}^n }_{-1,h}^2 + \frac{8\chi^2 \rho^2}{\varepsilon_2^2}\dt\nrm{ \tilde{\phi}^{n-1} }_{-1,h}^2
+\frac{\varepsilon^2}{2}\dt \nrm{ \tilde{\phi}^{n+1} }_{-1,h}^2
\nonumber
\\
+&(\frac{\varepsilon_1^2}{2}+\frac{\varepsilon_2^2}{2}) \dt \nrm{ \nabla_h \tilde{\phi}^{n+1} }_2^2
 + \frac{2 \dt}{\varepsilon^2}\nrm{ \tau^n }_{-1,h}^2.
\label{CH_LOG-convergence-7}
	\end{align}
Now we observe that $\nrm{ \mathbf{p}^1 }_{-1,G}^2 = \frac{5}{2 }\nrm{ \tilde{\phi}^1 }_{-1,h}^2=\frac{5}{2 }\nrm{ \tilde{\phi}^0 }_{-1,h}^2=0.$\\
Summing both sides of \eqref{CH_LOG-convergence-7} with respect to $n$ gives
     \begin{eqnarray}
&&\nrm{\mathbf{p}^{n+1}}_{-1,G}^2 +  A \dt^2\nrm{\nabla_h\tilde{\phi}^{n+1}}_2^2
+ \frac{\dt}{9}\sum_{k=1}^n\nrm{ \nabla_h \tilde{\phi}^{k+1} }_2^2
\\
\nonumber
&\leq& \frac{\varepsilon^2}{2}\dt \nrm{ \tilde{\phi}^{n+1} }_{-1,h}^2
+\left(\frac{32\chi^2 \rho^2}{\varepsilon_1^2} + \frac{8\chi^2 \rho^2}{\varepsilon_2^2} + \frac{\varepsilon^2}{2}\right) \dt \sum_{k=1}^n \nrm{ \tilde{\phi}^k }_{-1,h}^2
\\
\nonumber
&&+ (\frac{\varepsilon_1^2}{2}+\frac{\varepsilon_2^2}{2})\dt \sum_{k=1}^n\nrm{ \nabla_h \tilde{\phi}^{k+1} }_2^2
+ \frac{2 \dt}{\varepsilon^2}\sum_{k=1}^n\nrm{ \tau^k }_{-1,h}^2.
\label{CH_LOG-convergence-8}
     \end{eqnarray}
We observe $\nrm{ \mathbf{p}^{n+1} }_{-1,G}^2 \geq \frac{1}{2 }\nrm{ \tilde{\phi}^{n+1} }_{-1,h}^2$. Let $\frac{\varepsilon_1^2}{2}+\frac{\varepsilon_2^2}{2}=\frac{1}{18}$, we have
     \begin{equation}
      (\frac{1}{2}-\frac{\varepsilon^2\dt}{2}) \nrm{ \tilde{\phi}^{n+1} }_{-1,h}^2
      +\frac{\dt}{18}\sum_{k=1}^n\nrm{ \nabla_h \tilde{\phi}^{k+1} }_2^2
\leq \left(\frac{32\chi^2 \rho^2}{\varepsilon_1^2} + \frac{8\chi^2 \rho^2}{\varepsilon_2^2} + \frac{\varepsilon^2}{2}\right) \dt \sum_{k=1}^n \nrm{ \tilde{\phi}^k }_{-1,h}^2
+ \frac{2 \dt}{\varepsilon^2}\sum_{k=1}^n\nrm{ \tau^k }_{-1,h}^2.
     \end{equation}
By taking $\varepsilon^2 \dt < 1$, we get the following estimate by using the discrete Gronwall inequality
     \begin{equation}
\nrm{ \tilde{\phi}^{n+1} }_{-1,h} +\left(\frac{\dt}{18}\sum_{k=1}^n\nrm{ \nabla_h \tilde{\phi}^{k+1} }_2^2\right)^{\nicefrac{1}{2}} \le C ( \dt^2 + h^2 ) .
	\label{CH_LOG-convergence-9}
	 \end{equation}
This completes the proof.
\end{proof}

\section{Numerical results}    \label{sec:numerical results}
In this section, we use the proposed second-order BDF scheme \eqref{scheme-CH_LOG-1}-\eqref{scheme-mu-0} to numerically solve the MMC-TDGL model.
\subsection{Nonlinear multigrid solvers}   \label{subsec:multigrid}

We use the nonlinear multigrid method for solving the semi-implicit numerical scheme \eqref{scheme-CH_LOG-1}-\eqref{scheme-mu-0}. The fully discrete finite-difference scheme \eqref{scheme-CH_LOG-1}-\eqref{scheme-mu-0} is formulated as follows: Find $\phi_{i,j}^{n+1}$ and $\mu_{i,j}^{n+1}$ in ${\mathcal C}_{\rm per}$ such that
     \begin{eqnarray}	
3\phi_{i,j}^{n+1} - 2\dt \Delta_h \mu_{i,j}^{n+1} = 4\phi_{i,j}^n - \phi_{i,j}^{n-1},&& \nonumber
    \\
\mu_{i,j}^{n+1} - \kappa'(\phi_{i,j}^{n+1})\left(a_x((D_x \phi^{n+1})^2)
                +a_y((D_y \phi^{n+1})^2)\right)_{i,j}&&\nonumber\\
                +  2d_x(A_x\kappa(\phi^{n+1})D_x \phi^{n+1})_{i,j} + 2d_y(A_y\kappa(\phi^{n+1})D_y \phi^{n+1})_{i,j}&&\nonumber\\
                +  A \dt \Dh\phi_{i,j}^{n+1} - S'(\phi_{i,j}^{n+1})
                = H'(2\phi_{i,j}^n-\phi_{i,j}^{n-1}) + A \dt \Dh\phi_{i,j}^n.&&
                \nonumber
	 \end{eqnarray}
Denote
${\bf u}=(\phi_{i,j}^{n+1}, \mu_{i,j}^{n+1})^T$. Then the above discrete nonlinear system can be written in terms of a nonlinear operator ${\bf N}$ and the source term ${\bf S}$ such that
     \begin{eqnarray}\label{eqn:dis-nonlinear-system}
{\bf N}({\bf u})= {\bf S} .
     \end{eqnarray}
The $2\times N\times N$ nonlinear operator ${\bf N}({\bf u}^{n+1})=\big(N_{i,j}^{(1)}({\bf u}), N_{i,j}^{(2)}({\bf u})\big)^T$ can be defined as
     \begin{eqnarray*}
N_{i,j}^{(1)}({\bf u})&=& 3\phi_{i,j}^{n+1} - 2\dt \Delta_h \mu_{i,j}^{n+1},  \label{scheme-MMC-o1}
\\
N_{i,j}^{(2)}({\bf u})&=&\mu_{i,j}^{n+1} - \kappa'(\phi_{i,j}^{n+1})\left(a_x((D_x \phi^{n+1})^2)
                +a_y((D_y \phi^{n+1})^2)\right)_{i,j}\nn\\
                &&+  2d_x(A_x\kappa(\phi^{n+1})D_x \phi^{n+1})_{i,j} + 2d_y(A_y\kappa(\phi^{n+1})D_y \phi^{n+1})_{i,j}\nn\\
                &&+  A \dt \Dh\phi_{i,j}^{n+1} - S'(\phi_{i,j}^{n+1}),
\label{scheme-MMC-o2}
     \end{eqnarray*}
and the $2\times N\times N$ source ${\bf S} =\big(S_{i,j}^{(1)}, S_{i,j}^{(2)}\big)^T$ is given by
     \begin{eqnarray*}
S_{i,j}^{(1)}&=& 4\phi_{i,j}^n - \phi_{i,j}^{n-1} ,  \label{scheme-MMC-s1}
\\
S_{i,j}^{(2)} &=&H'(2\phi_{i,j}^n-\phi_{i,j}^{n-1}) + A \dt \Dh\phi_{i,j}^n.
\label{scheme-MMC-s2}
     \end{eqnarray*}
The system \eqref{eqn:dis-nonlinear-system} can be efficiently solved  using a nonlinear Full Approximation Scheme (FAS) multigrid method, as reported in earlier works
\cite{wise10,Chen2017A,guo16,baskaran13a,feng2016bsam,hu09}. Here we only provide the details of nonlinear smoothing scheme. For smoothing operator, we use a nonlinear Gauss-Seidel method with Red-Black ordering.

Let $k$ be the smoothing iteration.
Then the smoothing scheme is given by: for every $(i,j)$, stepping  lexicographically from $(1,1)$ to $(N,N)$, find $\phi_{i,j}^{n+1,k+1}, \mu_{i,j}^{n+1,k+1}$ that solve
     \begin{small}
     \begin{eqnarray}
3\phi_{i,j}^{n+1,k+1} +\frac{8\dt}{h^2}\mu_{i,j}^{n+1,k+1}\nonumber
=\tilde{S}_{i,j}^{(1)},&&\\
\mu_{i,j}^{n+1,k+1} - \frac{\kappa'(\phi_{i,j}^{n+1,k})}{\phi_{i,j}^{n+1,k}}\left(a_x((D_x \phi)^2)
                +a_y((D_y \phi)^2)\right)_{i,j}^{n+1,k}&&\nn\\
-\frac{1}{h^2}\left(\kappa(\phi_{i+1,j}^{n+1,k}) +\kappa(\phi_{i-1,j}^{n+1,k+1}) +\kappa(\phi_{i,j+1}^{n+1,k}) +\kappa(\phi_{i,j-1}^{n+1,k+1})+4 \kappa(\phi_{i,j}^{n+1,k}) \right)\phi_{i,j}^{n+1,k+1}&&\nn\\
+ \frac{\kappa(\phi_{i,j}^{n+1,k})}{ h^2\phi_{i,j}^{n+1,k}}(\phi_{i+1,j}^{n+1,k}+ \phi_{i-1,j}^{n+1,k+1} + \phi_{i,j+1}^{n+1,k} + \phi_{i,j-1}^{n+1,k+1})\phi_{i,j}^{n+1,k+1}&&\nn\\
-\left(\frac{4A\dt}{h^2} + S''(\phi_{i,j}^{n+1,k})\right)\phi_{i,j}^{n+1,k+1}
= \tilde{S}_{i,j}^{(2)},&&\nonumber
     \end{eqnarray}
     \end{small}
where
     \begin{equation*}
     \begin{aligned}
\tilde{S}_{i,j}^{(1)}:=& S_{i,j}^{(1)}
+\frac{2\dt}{h^2}( \mu_{i+1,j}^{n+1,k}+\mu_{i-1,j}^{n+1,k+1}+\mu_{i,j+1}^{n+1,k}+\mu_{i,j-1}^{n+1,k+1}),\nn\\
\tilde{S}_{i,j}^{(2)}:=& S_{i,j}^{(2)}- S''(\phi_{i,j}^{n+1,k})\phi_{i,j}^{n+1,k} + S'(\phi_{i,j}^{n+1,k})\nn\\
&-\frac{1}{h^2}
\Bigl(\kappa(\phi_{i+1,j}^{n+1,k})\phi_{i+1,j}^{n+1,k} +\kappa(\phi_{i-1,j}^{n+1,k+1})\phi_{i-1,j}^{n+1,k+1}\nn\\ &\quad\quad+\kappa(\phi_{i,j+1}^{n+1,k})\phi_{i,j+1}^{n+1,k} +\kappa(\phi_{i,j-1}^{n+1,k+1})\phi_{i,j-1}^{n+1,k+1}\Bigr)\nn\\
& -\frac{A\dt}{h^2}(\phi_{i+1,j}^{n+1,k}+ \phi_{i-1,j}^{n+1,k+1} + \phi_{i,j+1}^{n+1,k} + \phi_{i,j-1}^{n+1,k+1}).
     \end{aligned}
     \end{equation*}
The above linearized system, which comes from  a local Newton approximation of the logarithmic term and a local linearization of other nonlinear terms in the Gauss-Seidel scheme, can be solved by the Cramer's Rule.
\subsection{Numerical experiments}    \label{subsec:numerical results}
In this part, we perform some numerical simulations for the scheme \eqref{scheme-CH_LOG-1}-\eqref{scheme-mu-0} to verify the theoretical results including energy decay, mass conservation, the second order accuracy and positivity of the numerical  solution. For this, we will present three numerical examples with different initial conditions.

We use the domain $\Omega = [0,64]^2$ in 2D, $\Omega = [0,64]^3$ in 3D and choose the parameters in the model as $\chi =2.37, N_2 = 0.16, N_1 = 5.12$. In addition, we set $A = \chi^2\rho^2$.
\begin{example}\label{example 1}
The initial data is chosen as
\begin{eqnarray}
\phi_0(x,y) = 0.6+0.15\cos\big({3\pi x}/{32}\big)\cos\big({3\pi y}/{32}\big),\label{eqn:init1}
\end{eqnarray}
\end{example}
and this problem is subject to periodic boundary condition. The time step is $\dt = 0.001$.
This example is designed to study the numerical accuracy in time and space.
\begin{figure}[ht]
	\begin{center}
		\begin{subfigure}{}
			\includegraphics[width=1.9in]{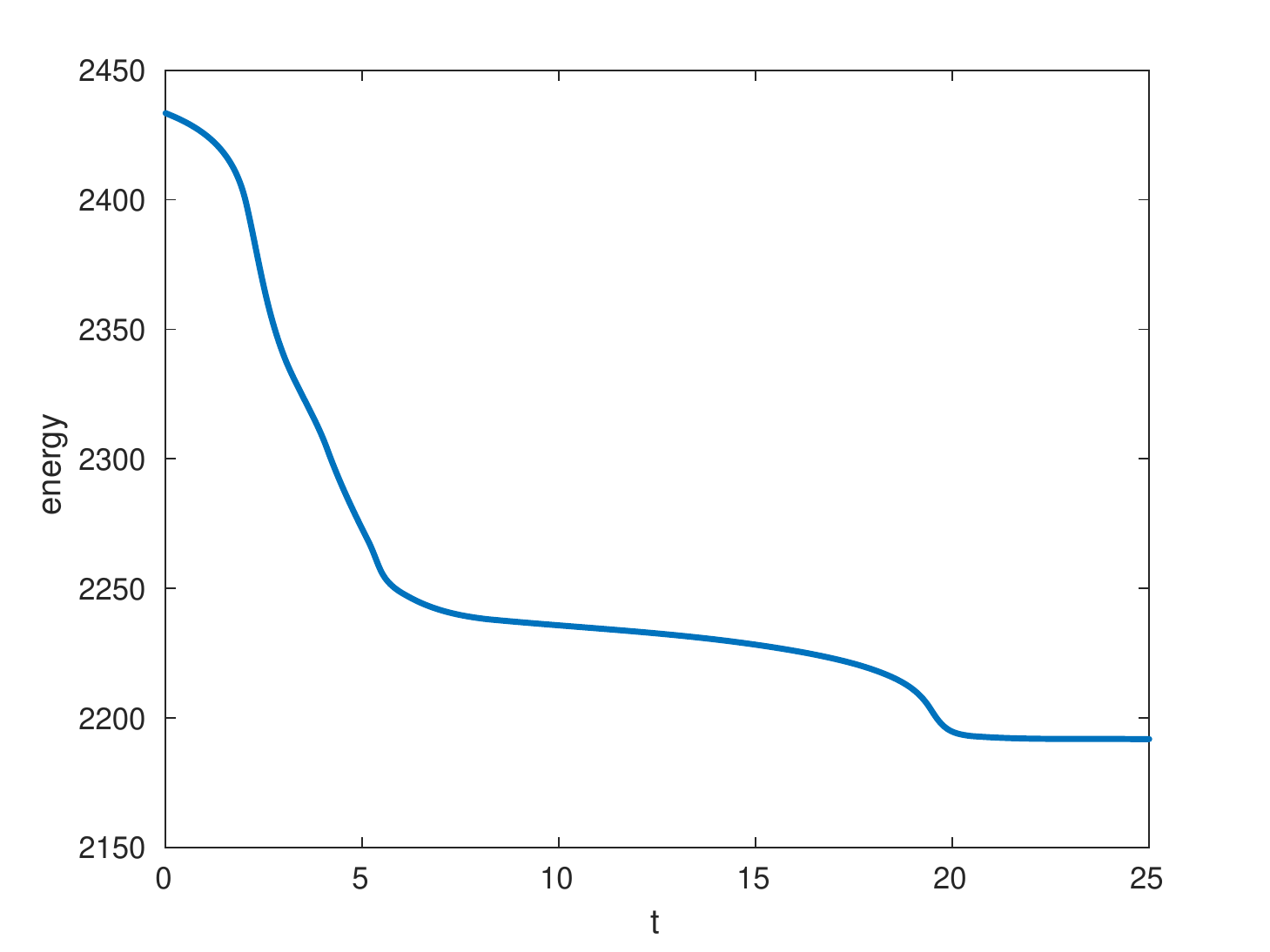}
		\end{subfigure}
		\begin{subfigure}{}
			\includegraphics[width=1.9in]{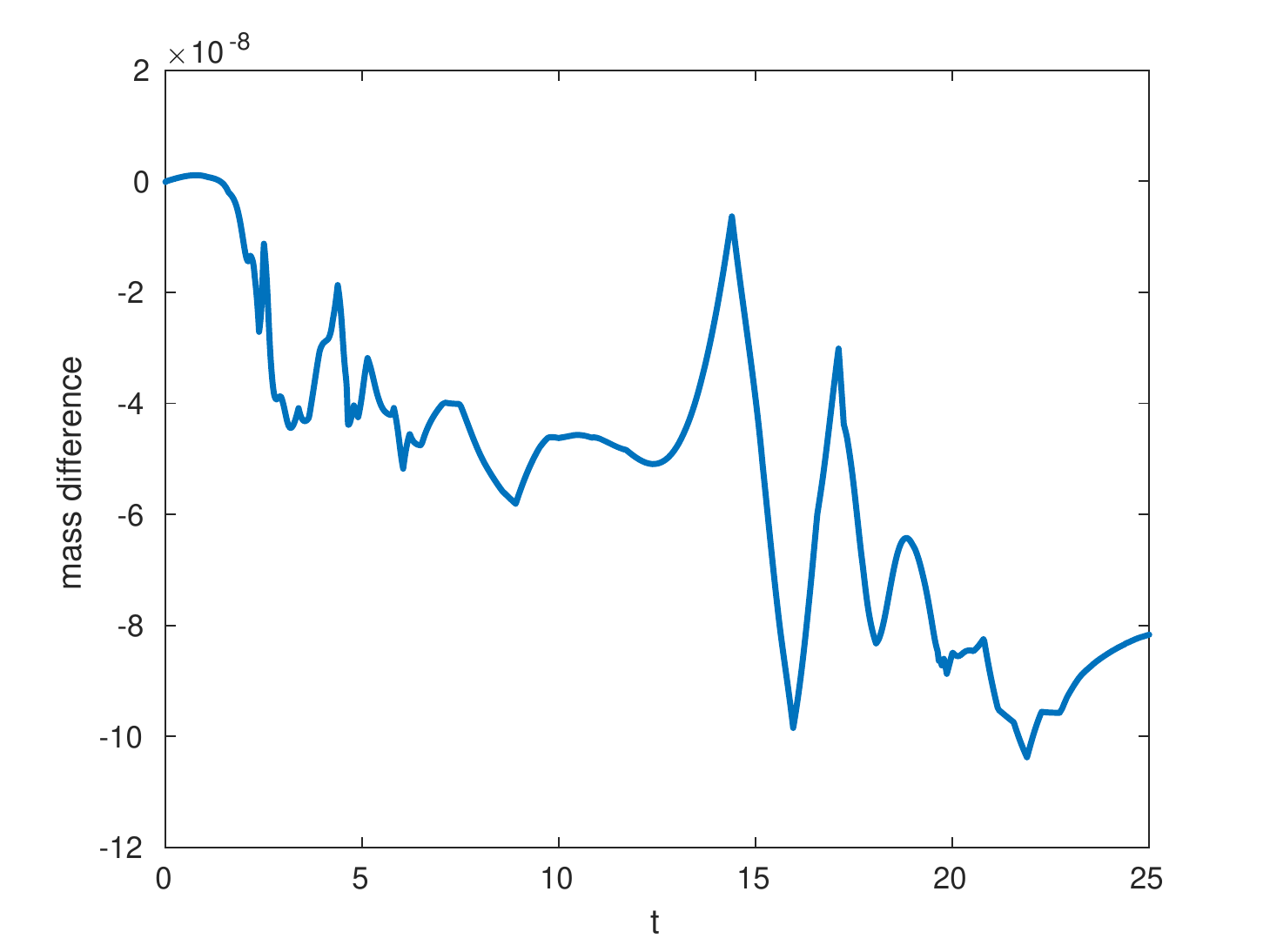}
		\end{subfigure}
	\end{center}
	\caption{Example \ref{example 1}: the left is the energy evolution with time and the right is the error development of the total mass. }
	\label{fig:cosenergymass}
\end{figure}
In the left of the Fig~\ref{fig:cosenergymass}, it illustrates the energy evolution, which indicates energy decay with time. \\
We present the evolution of the mass difference of $\phi$ computed as $\overline{\phi^n}-\overline{\phi}_0$, where $\overline{\phi^n}$ is defined in~\eqref{mass_define}.
The rough estimate of the difference of the total mass of $\phi$ is presented in the right of the Fig~\ref{fig:cosenergymass}, which means that the property stated in \eqref{mass_conserv-1} is verified numerically.

\begin{figure}[!htp]
\begin{center}	
\includegraphics[width=1.9in]{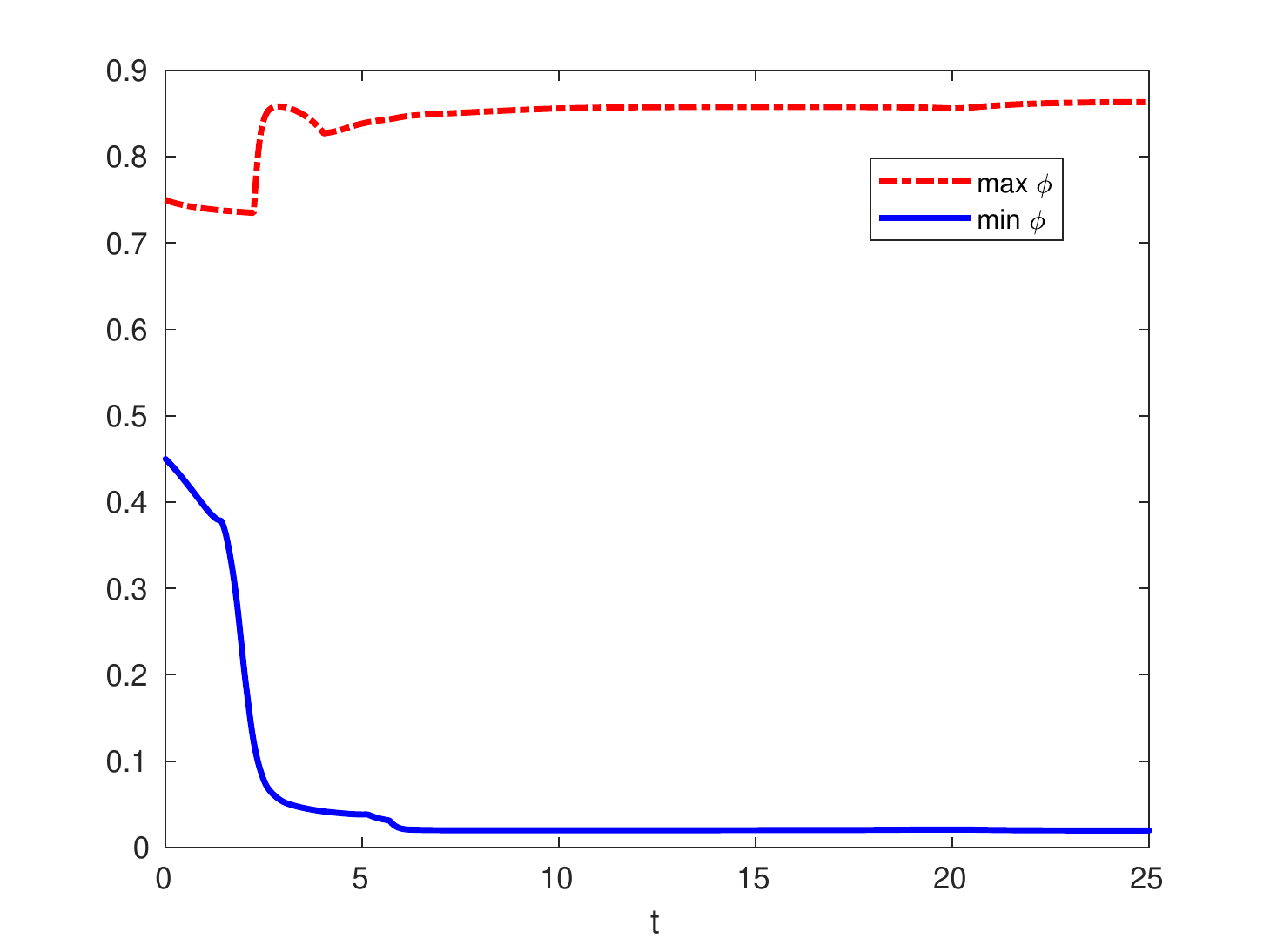}
	\caption{Example \ref{example 1}: the maximum and minimum values with time. }\label{fig:cosmaxmin}
\end{center}
\end{figure}

In Fig~\ref{fig:cosmaxmin}, we plot the maximum and minimum values of $\phi_{i,j}^n$ with time developing. It is observed that the numerical solution well remains in the interval of $(0,\nicefrac{1}{\rho})$.

In order to test the second order convergence, we use a linear refinement path, \emph{i.e.}, $\dt=Ch, C = 0.0002$. At the final time $T=0.128$, we expect the global error to be $\mathcal{O}(\dt^2)+\mathcal{O}(h^2)=\mathcal{O}(h^2)$ under the  $\ell^2$ norm, as $h, \dt \to 0$.  Since we do not have an exact solution, instead of calculating the error at the final time, we compute the Cauchy difference, which is defined as $\delta_\phi: =\phi_{h_f}-\mathcal{I}_c^f(\phi_{h_c})$, where $\mathcal{I}_c^f$ is a bilinear interpolation operator (We applied Nearest Neighbor Interpolation in Matlab, see \cite{Chen2017A,feng2016fch,feng2016preconditioned}). This requires having a relatively coarse solution, parametrized by $h_c$, and a relatively fine solution, parametrized by $h_f$, where $h_c = 2 h_f$, at the same final time. The $\ell^2$ norms of Cauchy difference and the convergence rates can be found in Table~\ref{tab:cov}. The  results confirm our expectation for the convergence order.

\begin{table}[!htb]
\begin{center}
\caption{Errors and convergence rates. Parameters are given in the
		text, and the initial data are defined in \eqref{eqn:init1}. The refinement path is $\dt=0.0002h$.} \label{tab:cov}
\begin{tabular}{cccccc}
\hline Grid sizes&$16^2$ &$32^2$& $64^2$& $128^2$& $256^2$  \\
\hline Error& $ 4.0436\text{E-}01$&$ 1.0328\text{E-}02$ &$ 2.5941\text{E-}02$&$ 6.4847\text{E-}03$&$ 1.6171\text{E-}03$
\\Rate&-& 1.97  &   1.99&   2.00&   2.00
\\
\hline
\end{tabular}
\end{center}
\end{table}
\begin{figure}[ht]
	\begin{center}
		\begin{subfigure}{}
			\includegraphics[width=4.5in]{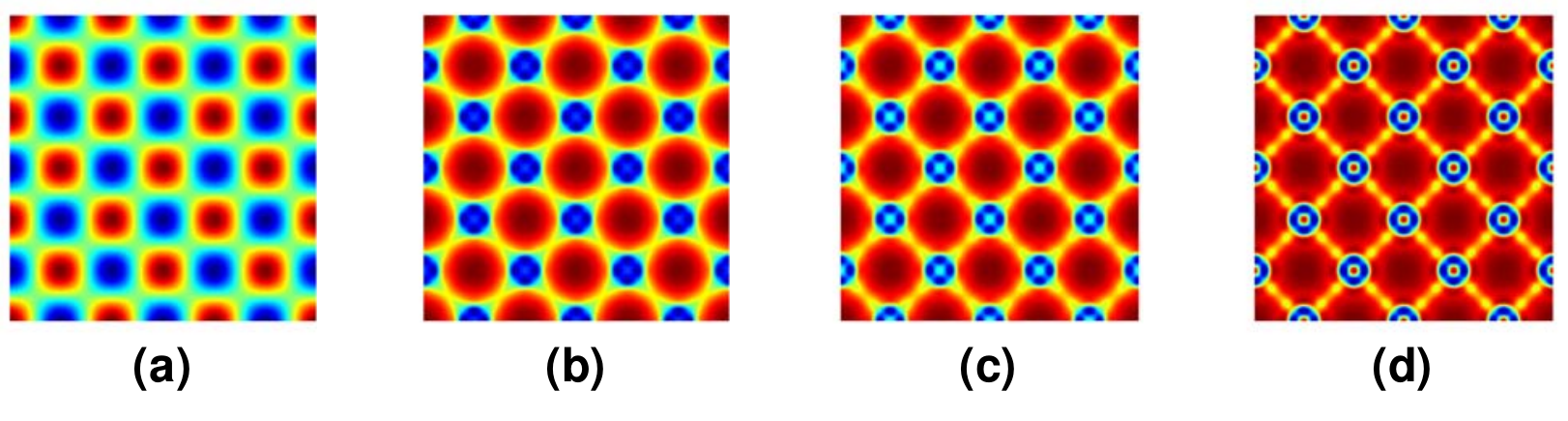}
		\end{subfigure}
        \vskip -0.3cm
		\begin{subfigure}{}
			\includegraphics[width=4.5in]{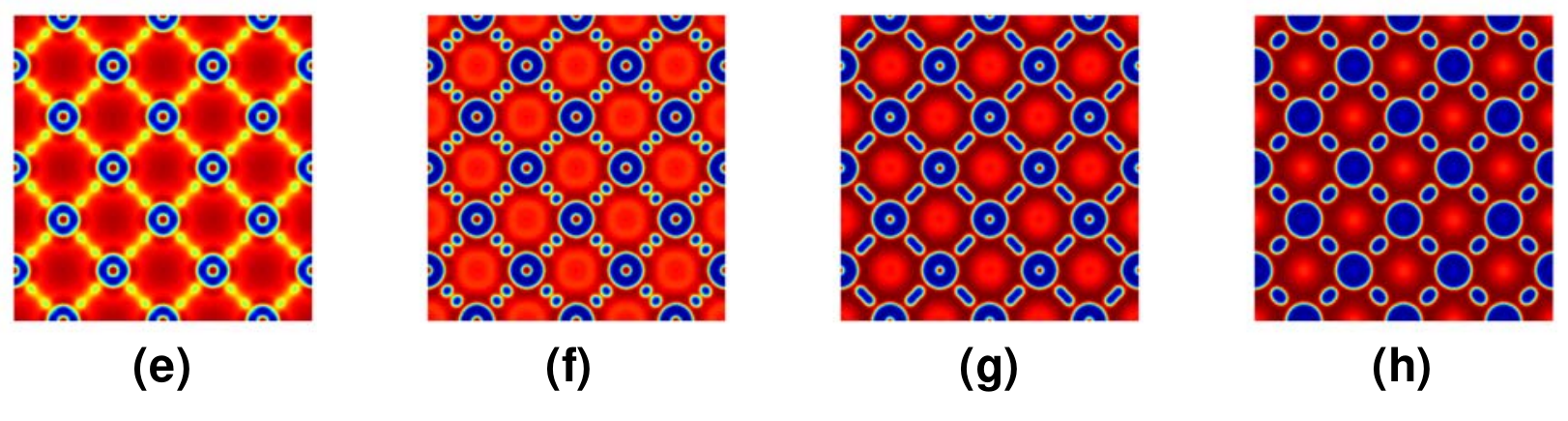}
		\end{subfigure}
        \vskip -0.3cm
		\begin{subfigure}{}
			\includegraphics[width=4.5in]{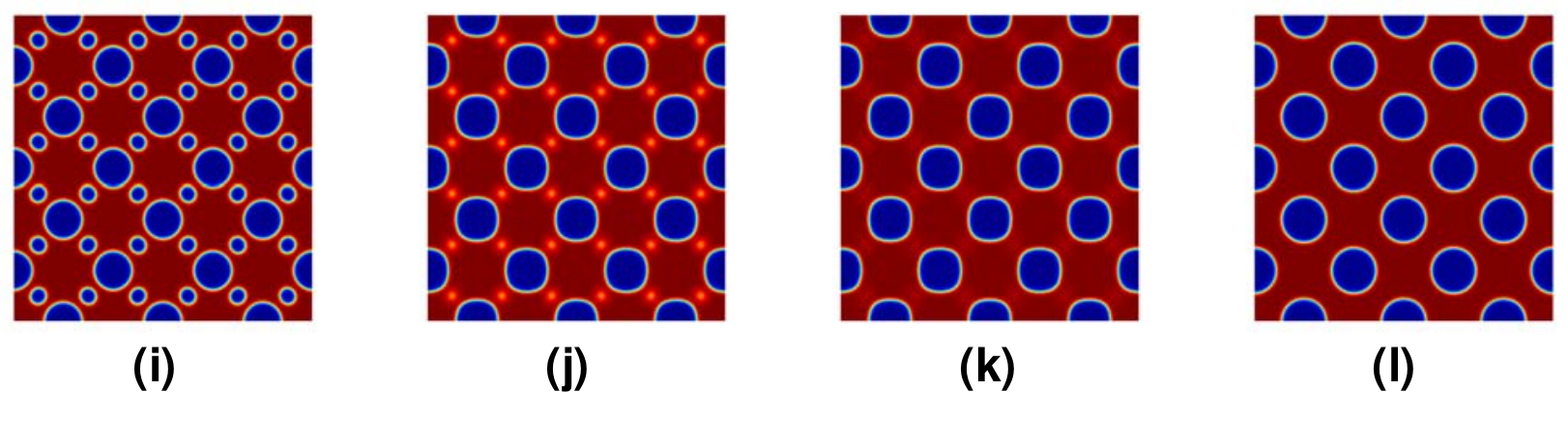}
		\end{subfigure}
	\end{center}
    \vskip -0.7cm
	\caption{Example \ref{example 1}: the phase evolution of $\phi$. This computation is done using the BDF2 $A = \chi^2\rho^2$ scheme. $(a-l)$ are corresponding to
$t = 0, 1.7, 1.9, 2.2, 2.3, 3.5, 4.5, 5.5, 9.5, 19.5, 20~ \text{and}~ 25$.
$ \dt= 1.0 \times 10^{-3}, N = 256.$ }
\label{fig:long-time-cos}
\end{figure}

Fig~\ref{fig:long-time-cos} describes the evolution of $\phi$ at some selected time levels with the
initial condition \eqref{eqn:init1}. The numerical results are consistent with the experiments on
this topic in \cite{Zhai2012Investigation}.

We present the error comparison of four schemes for the MMC-TDGL equation: classical first-order
convex splitting scheme(CS1), the scheme \eqref{scheme-CH_LOG-1}-\eqref{scheme-CH_initial} with
$A=\chi^2\rho^2$, the scheme \eqref{scheme-CH_LOG-1}-\eqref{scheme-CH_initial} without
regularization term, i.e., $A=0$, and the standard BDF2(full implicit) scheme in the tables
~\ref{tab:cov_compare_1} and ~\ref{tab:cov_compare_2}. The standard BDF2(full implicit) scheme
shows excellent accuracy in the finite short time. BDF2 $A=0$ scheme has more extra error than the standard BDF2(full implicit) scheme, owing to the explicit expression of the concave
term. The
scheme \eqref{scheme-CH_LOG-1}-\eqref{scheme-CH_initial} with $A=\chi^2\rho^2$  has more extra error
than the BDF2 with $A=0$ due to the existence of the regularization term.The standard BDF2(full implicit) scheme is convergent in the early stage, but it is not convergent in the late stage.

Fig~\ref{fig:energy compare} shows the energy comparison of three schemes for the MMC-TDGL equation:
classical first-order convex splitting scheme(CS1), the scheme \eqref{scheme-CH_LOG-1}-\eqref{scheme-CH_initial} with
$A=\chi^2\rho^2$ and the scheme \eqref{scheme-CH_LOG-1}-\eqref{scheme-CH_initial} without
regularization term, i.e., $A=0$. The three energy plots are all non-increasing with time. The development of energy using the scheme \eqref{scheme-CH_LOG-1}-\eqref{scheme-CH_initial} with
$A=\chi^2\rho^2$ is almost same to that using the scheme \eqref{scheme-CH_LOG-1}-\eqref{scheme-CH_initial} without
regularization term, i.e., $A=0$. There exists an accepted energy error using  CS1 scheme.

\begin{table}[!htb]
\begin{center}
\caption{$\dt = 1\times 10^{-3}$, $T=1.6$.} \label{tab:cov_compare_1}
\begin{tabular}{ccccccccc}
\hline
&Scheme& Maxerr & L2err & CPU \\
\hline &CS1& 2.0085e-03 & 2.1857e-02&0.9907

\\ & BDF2 A=0 & 1.1982e-04 &1.5000e-03& 0.9117
\\ & BDF2 $A=\chi^2\rho^2$& 1.3426e-04 & 1.5637e-03 &1.0343
\\ & Standard BDF2 &1.0538e-04  & 1.3304e-03 &1.1529
\\
\hline
\end{tabular}
\end{center}
\end{table}
\begin{table}[!htb]
\begin{center}
\caption{$\dt = 2\times 10^{-3}$, $T=1.6$.}\label{tab:cov_compare_2}
\begin{tabular}{ccccccccc}
\hline
&Scheme& Maxerr & L2err & CPU \\
\hline &CS1& 4.0044e-03 & 4.3772e-02&1.3070
\\ & BDF2 A=0 & 3.0604e-04&3.6000e-03&1.1111
\\ & BDF2 $A=\chi^2\rho^2$& 3.6588e-04& 4.1274e-03&1.2122
\\ & Standard BDF2 &2.6889e-04& 3.4268e-03& 1.8277
\\
\hline
\end{tabular}
\end{center}
\end{table}

\begin{figure}[ht]
	\begin{center}
			\includegraphics[width=4.3in]{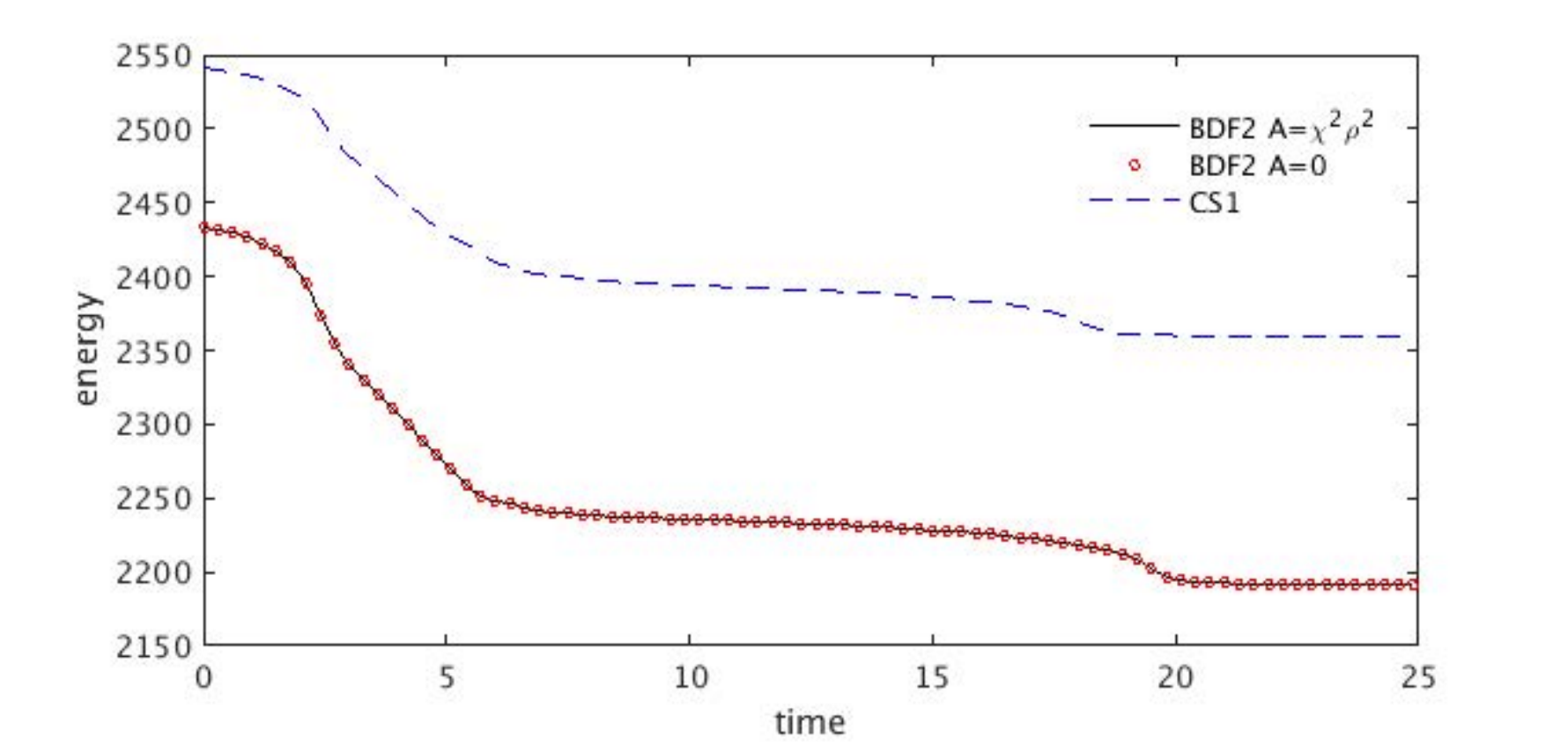}
	\end{center}
	\caption{Example \ref{example 2}: The energy evolution with time using different schemes
  classical first-order convex splitting scheme(CS1), the scheme \eqref{scheme-CH_LOG-1}-\eqref{scheme-CH_initial} with $A=\chi^2\rho^2$, and the scheme \eqref{scheme-CH_LOG-1}-\eqref{scheme-CH_initial} with $A=0$. }
	\label{fig:energy compare}
\end{figure}

\begin{example}\label{example 2}
The initial data is chosen as:
\begin{eqnarray}\label{eqn:init2}
\phi_0(x,y) = 0.6+r_{i,j},
\end{eqnarray}
where the $r_{i,j}$ are uniformly distributed random numbers in [-0.15, 0.15].
\end{example}

\begin{figure}[ht]
	\begin{center}
		\begin{subfigure}{}
			\includegraphics[width=1.9in]{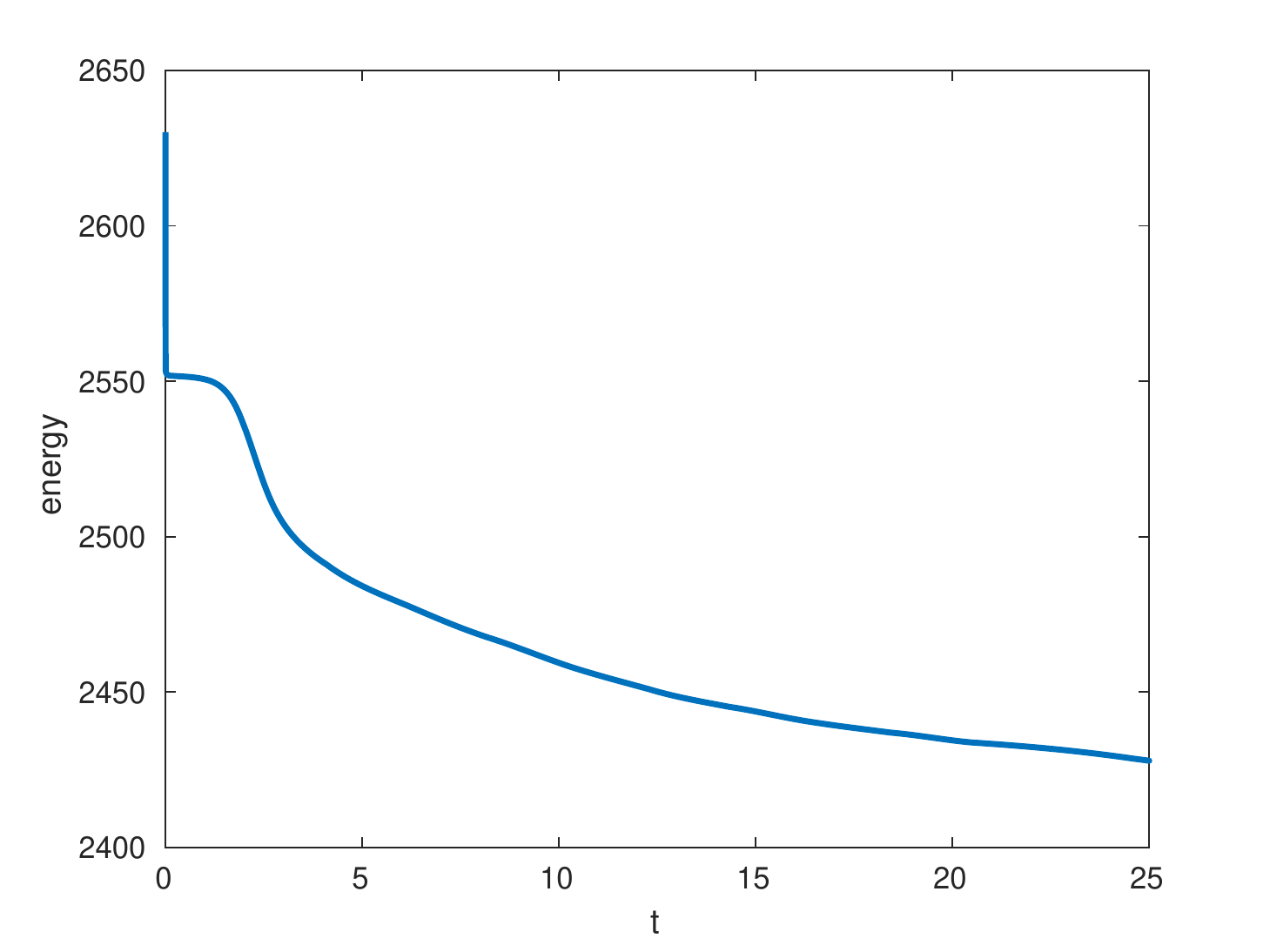}
		\end{subfigure}
		\begin{subfigure}{}
			\includegraphics[width=1.9in]{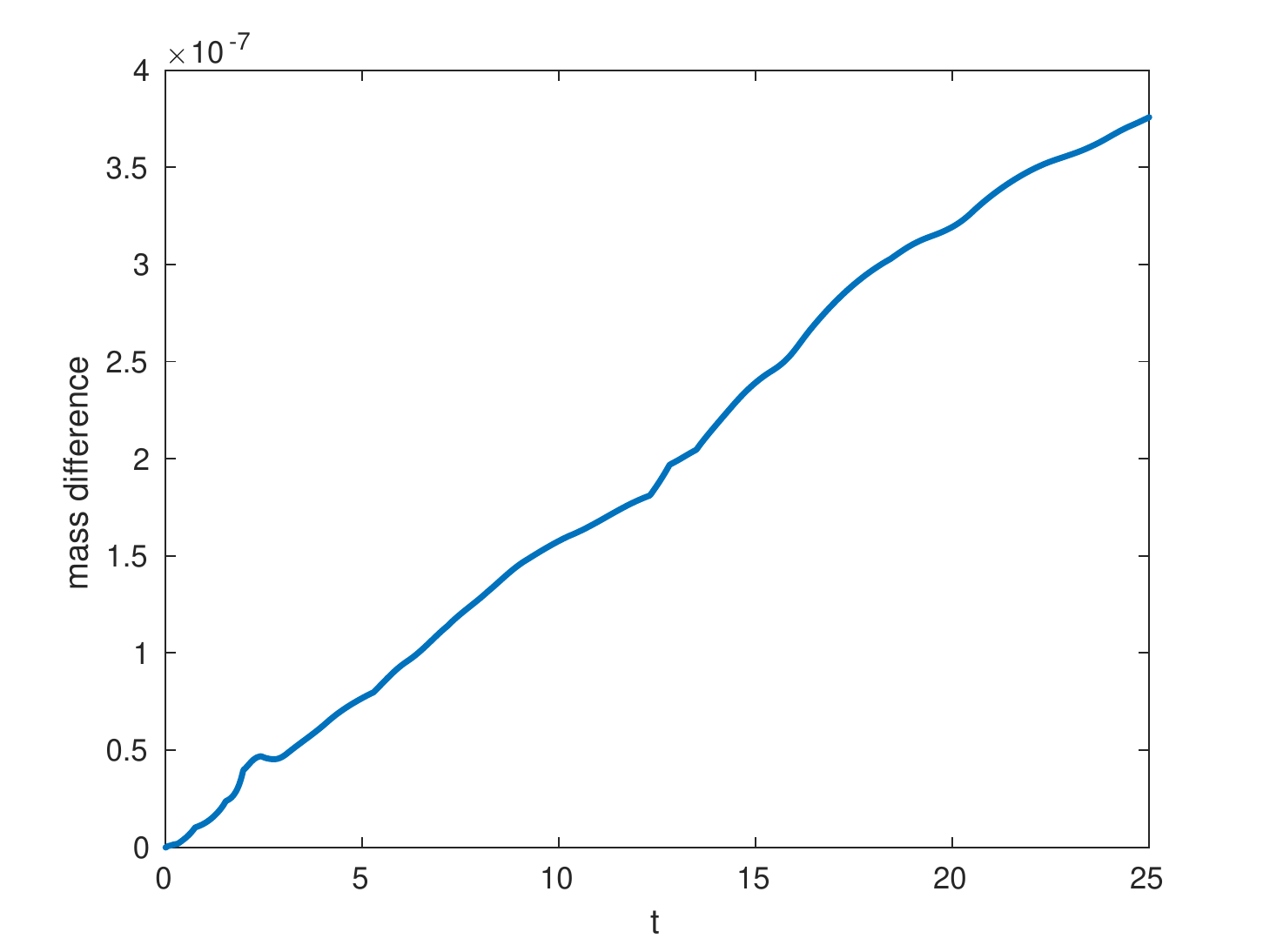}
		\end{subfigure}
	\end{center}
	\caption{Example \ref{example 2}: the left is the energy evolution with time and the right is the error development of the total mass. }
	\label{fig:randomenergymass}
\end{figure}

In the left of the Fig~\ref{fig:randomenergymass}, we show the energy evolution, which proves the energy decay with time. In the computation, the mass conservation of $\phi$ is also numerically observed from the right of the Fig~\ref{fig:randomenergymass}, which  is similar to the one shown in Fig ~\ref{fig:cosenergymass}.\\
\begin{figure}[!htp]
\begin{center}	
\includegraphics[width=1.9in]{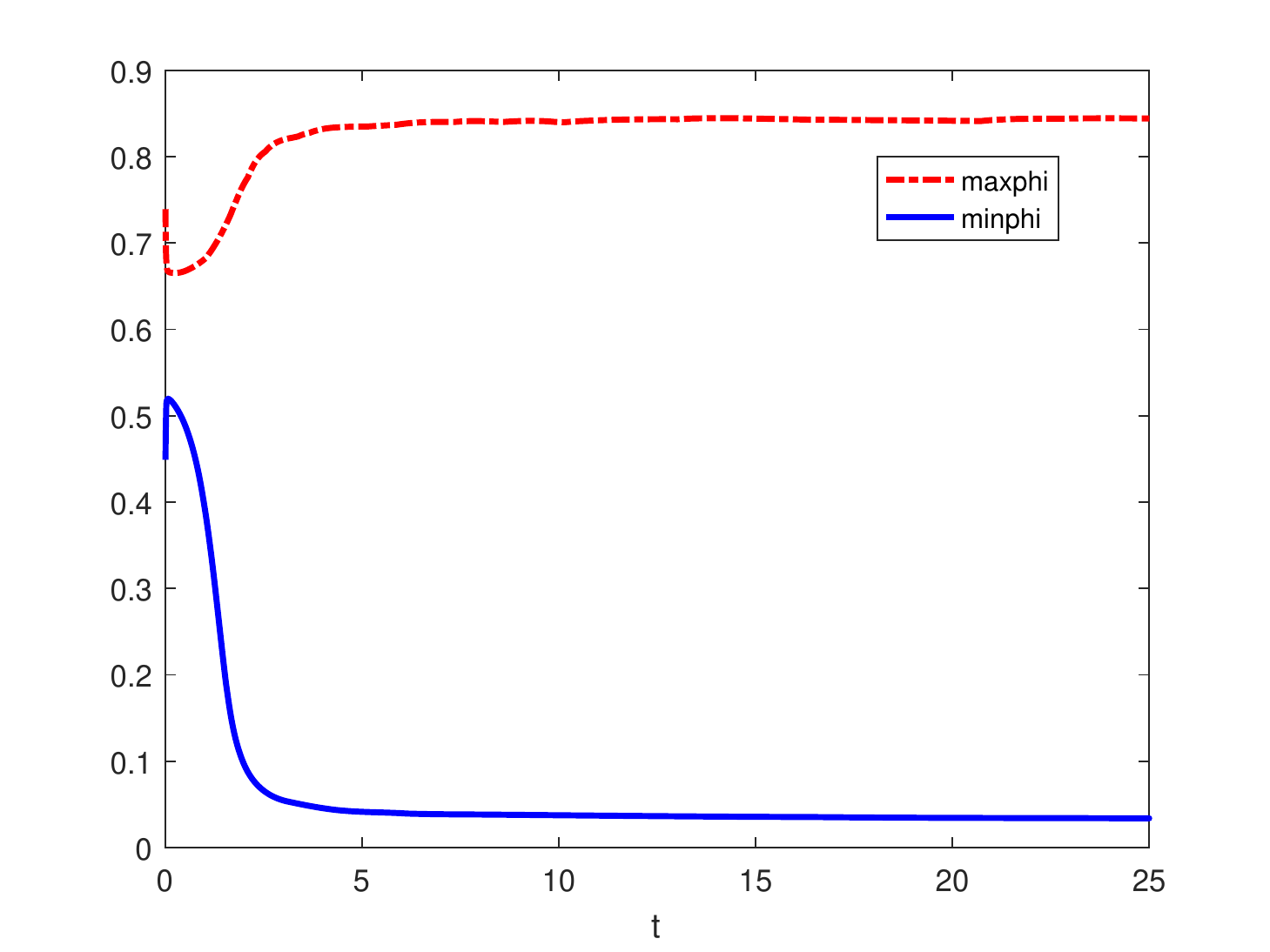}
	\caption{Example \ref{example 2}: the maximum and minimum values with time. }\label{fig:randommaxmin}
\end{center}
\end{figure}

In Fig~\ref{fig:randommaxmin}, we present the maximum and minimum value of the numerical solution with time with random initial value. It's seen that the positivity property is examined numerically.
\begin{figure}[ht]
	\begin{center}
		\begin{subfigure}{}
			\includegraphics[width=4.5in]{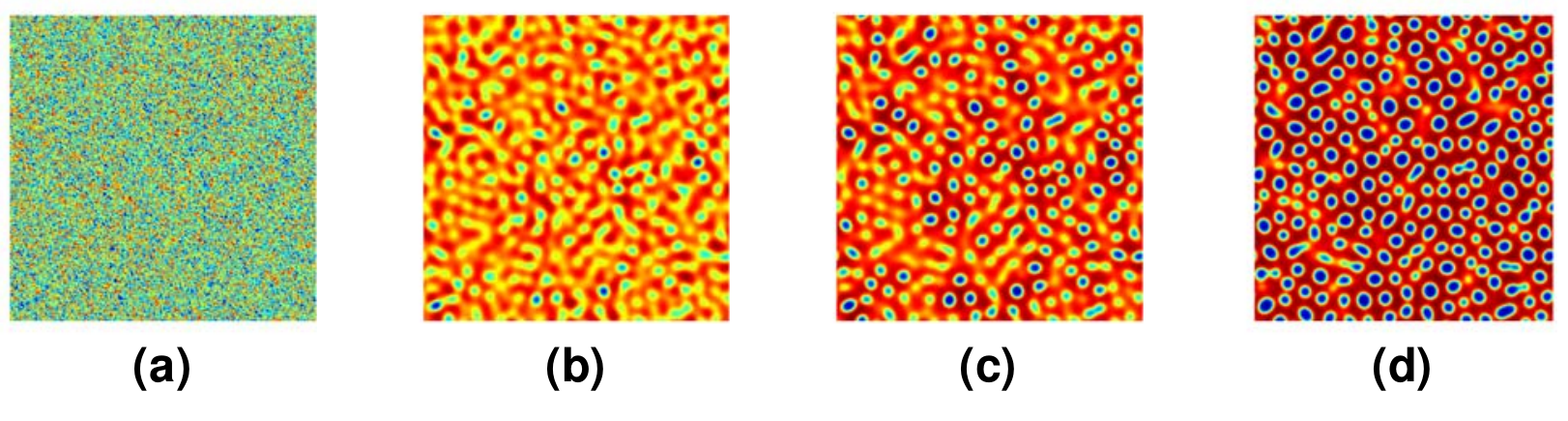}
		\end{subfigure}
        \vskip -0.3cm
		\begin{subfigure}{}
			\includegraphics[width=4.5in]{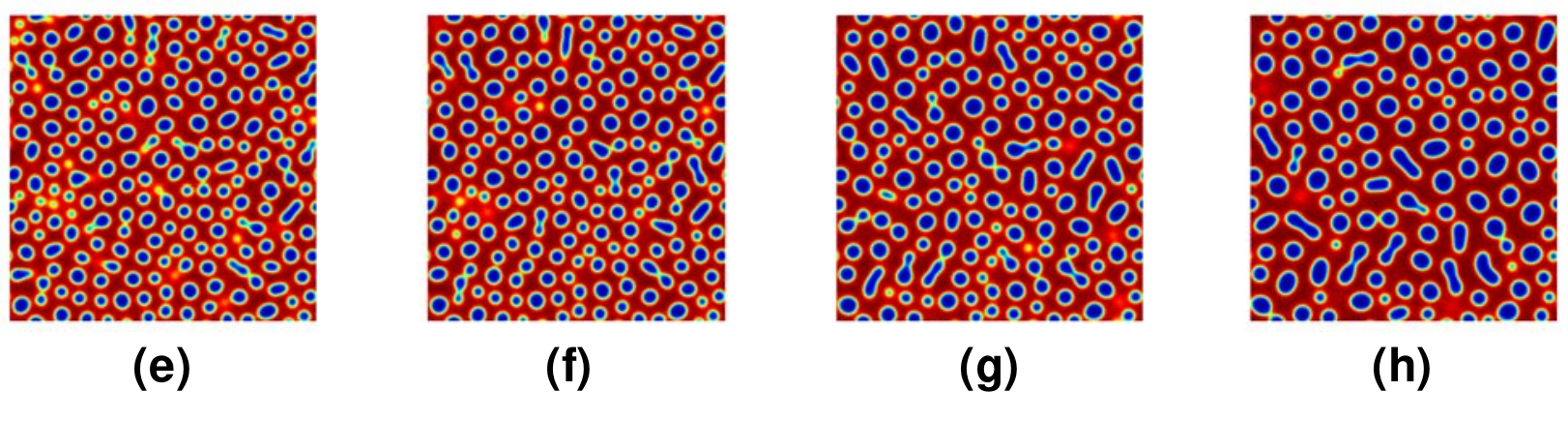}
		\end{subfigure}
        \vskip -0.3cm
		\begin{subfigure}{}
			\includegraphics[width=4.5in]{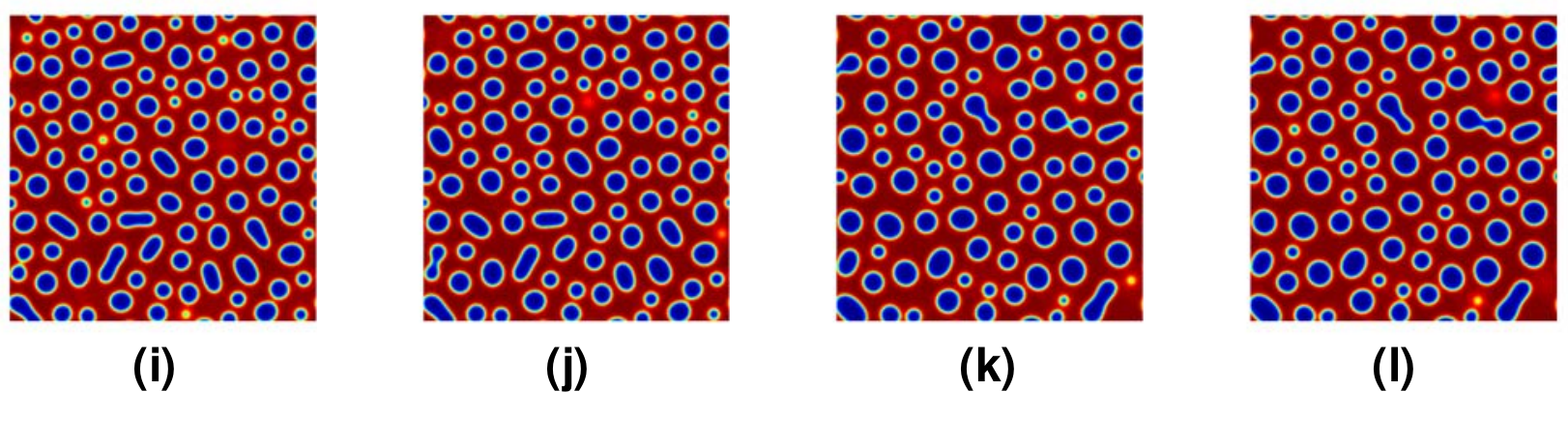}
		\end{subfigure}
	\end{center}
	\caption{Example \ref{example 2}: $t = 0, 1, 2, 3, 4, 5, 7, 11, 15, 17, 24~ \text{and}~ 25$. $\dt= 1.0 \times 10^{-3}, N = 256.$ }
	\label{fig:long_time_random}
\end{figure}

In Fig~\ref{fig:long_time_random}, we present the evolution of $\phi$ at different time with the initial data \eqref{eqn:init2}. In order to compare with the results obtained by Li in \cite{Li2016An}, we choose the same parameters  and initial data in the model. The numerical results are similar to the ones shown in \cite{Li2016An}.

\begin{example}\label{example 3}
The initial data is chosen as
\begin{eqnarray}
\phi_0(x,y,z) = 0.6+0.15\cos\big({3\pi x}/{32}\big)\cos\big({3\pi y}/{32}\big)\cos\big({3\pi z}/{32}\big),\label{eqn:init3}
\end{eqnarray}
\end{example}
\begin{figure}[ht]
	\begin{center}
		\begin{subfigure}{}
			\includegraphics[width=4.5in]{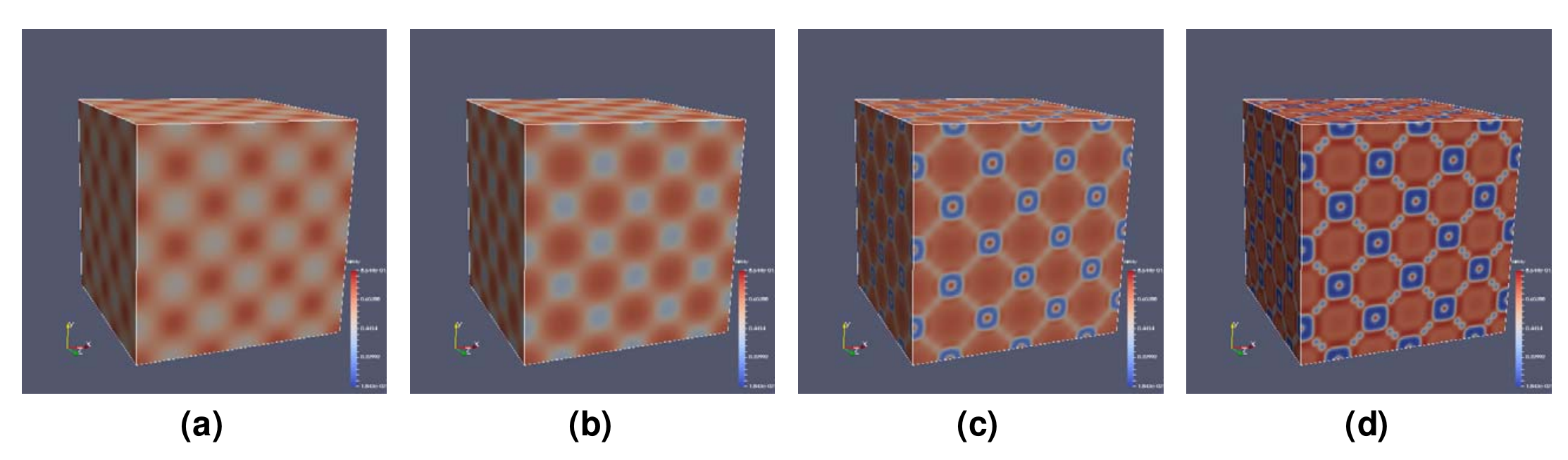}
		\end{subfigure}
        \vskip -0.3cm
		\begin{subfigure}{}
			\includegraphics[width=4.5in]{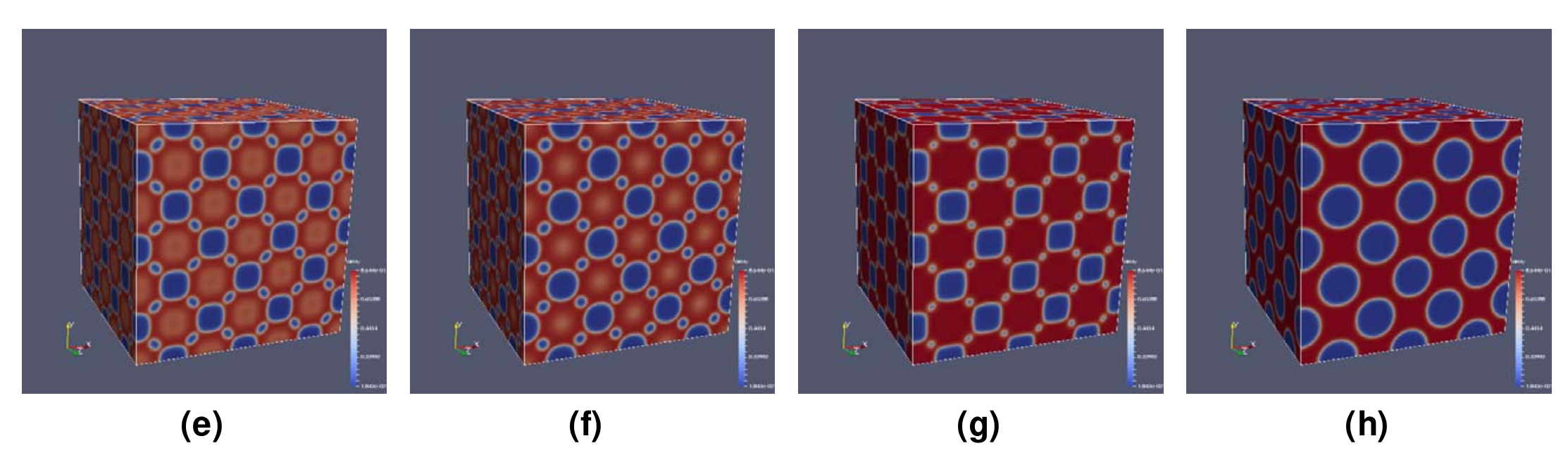}
		\end{subfigure}
	\end{center}
	\caption{Example \ref{example 3}: Three dimensional simulation. The contour plots of $\phi$, $(a-h)$ are corresponding to $t = 0, 1, 2, 3, 4, 5, 10~ \text{and}~ 25$. $ \dt= 1.0 \times 10^{-3}, N = 256.$ }
\label{fig:long-time-cos-3d}
\end{figure}
Fig~\ref{fig:long-time-cos-3d} describes the evolution of $\phi$ at some selected time levels with the initial condition \eqref{eqn:init3}. The numerical results are consistent with the Fig~\ref{fig:long-time-cos}.
\section{Conclusions} \label{sec:conclusion}
 Now we have presented a second order BDF scheme based on the convex splitting technique of the given energy functional for the MMC-TDGL equation, with a centered finite difference in space. A unique solvability and unconditional energy stability turn to be available. Moreover, the positivity-preserving property and the second order convergence analysis are available in the theoretical level. In addition, mass conservation, energy stability, bound of the numerical solution and the second order accurate are demonstrated in the numerical experiments. At last, we can see the details of the phase transition of the Macromolecular Microsphere Composite hydrogel.
\section*{Acknowledgments}
H.~Zhang is partly supported by the National Natural Science Foundation of China~(NSFC) Nos.11471046,11971002. Z.R.~Zhang is partly supported by the National Natural Science Foundation of China~(NSFC) No.11871105,11571045 and Science Challenge Project No. TZ2018002.

\bibliographystyle{plain}
\bibliography{MMCBDF_CICP}

\begin{thebibliography}{10}

\bibitem{baskaran13a}
A.~Baskaran, Z.~Hu, J.~Lowengrub, C.~Wang, S.~Wise, and P.~Zhou.
\newblock Energy stable and efficient finite-difference nonlinear multigrid
  schemes for the modified phase field crystal equation.
\newblock {\em J. Comput. Phys.}, 250:270--292, 2013.

\bibitem{Bosch2016Preconditioning}
J.~Bosch, C.~Kahle, and M.~Stoll.
\newblock Preconditioning of a coupled {Cahn-Hilliard Navier-Stokes} system.
\newblock {\em Commun. Comput. Phys.}, 23:603--628, 2018.

\bibitem{Chakrabarti1990Dynamics}
A.~Chakrabarti, R.~Toral, J.~Gunton, and M.~Muthukumar.
\newblock Dynamics of phase separation in a binary polymer blend of critical
  composition.
\newblock {\em J. Chem. Phys.}, 92:6899--6909, 1990.

\bibitem{CKDC}
K.~Chang, C.~Kril lII, Q.~Du, and L.~Chen.
\newblock Evaluating microstructural parameters of three-dimensional grains
  generated by phase-filed simulation or other votex-based techniques.
\newblock {\em Model. Simul. Mater. Sc.}, 20:075009, 2012.

\bibitem{chen12}
W.~Chen, S.~Conde, C.~Wang, X.~Wang, and S.~Wise.
\newblock A linear energy stable scheme for a thin film model without slope
  selection.
\newblock {\em J. Sci. Comput.}, 52:546--562, 2012.

\bibitem{chen17b}
W.~Chen, W.~Feng, Y.~Liu, C.~Wang, and S.~Wise.
\newblock A second order energy stable scheme for the {Cahn-Hilliard-Hele-Shaw}
  equation.
\newblock {\em Disc. Cont. Dyn. Sys. B}, 24:149--182, 2019.

\bibitem{CLW}
W.~Chen, Y.~Liu, C.~Wang, and S.~Wise.
\newblock An optimal-rate convergence analysis of a fully discrete finite
  difference scheme for {Cahn-Hilliard-Hele-Shaw} equation.
\newblock {\em Math. Comp.,}, 85:2231--2257, 2016.

\bibitem{chen14}
W.~Chen, C.~Wang, X.~Wang, and S.~Wise.
\newblock A linear iteration algorithm for energy stable second order scheme
  for a thin film model without slope selection.
\newblock {\em J. Sci. Comput.}, 59:574--601, 2014.

\bibitem{Chen2017A}
W.~Chen, C.~Wang, X.~Wang, and S.~Wise.
\newblock Positivity-preserving, energy stable numerical schemes for the
  cahn-hilliard equation with logarithmic potential.
\newblock {\em J. Comput. Phys. X}, 3:100031, 2019.

\bibitem{cheng18ch}
K.~Cheng, W.~Feng, C.~Wang, and S.M. Wise.
\newblock An energy stable fourth order finite difference scheme for the
  cahn-hilliard equation.
\newblock {\em J. Comput. Appl. Math.}, 362:574--595, 2019.

\bibitem{cheng2018c}
K.~Cheng, Z.~Qiao, and C.~Wang.
\newblock A third order exponential time differencing numerical scheme for
  no-slope-selection epitaxial thin film model with energy stability.
\newblock {\em J. Sci. Comput.}, 81(1):154--185, 2019.

\bibitem{Christlieb2014High}
A.~Christlieb, J.~Jones, J.~Promislow, K.~Wetton, B.~Willoughby, and Mark.
\newblock High accuracy solutions to energy gradient flows from material
  science models.
\newblock {\em J. Comput. Phys.}, 257:193--215, 2014.

\bibitem{elliott92a}
M.~Copetti and C.~Elliott.
\newblock Numerical analysis of the {Cahn-Hilliard} equation with a logarithmic
  free energy.
\newblock {\em Numer. Math.}, 63:39--65, 1992.

\bibitem{DD1}
S.B. Dai and Q.~Du.
\newblock Computational studies of coarsening rates for cahn-hilliard equation
  with phase-dependent diffusion mobility.
\newblock {\em J. Comput. Phys.}, 310:85--108, 2016.

\bibitem{DD2}
S.B. Dai and Q.~Du.
\newblock Weak solutions for the cahn-hilliard equation with degenerate
  mobility.
\newblock {\em Arch. Ration. Mech. An.}, 219:1161--1184, 2016.

\bibitem{diegel15a}
A.~Diegel, X.~Feng, and S.~Wise.
\newblock Convergence analysis of an unconditionally stable method for a
  {Cahn-Hilliard-Stokes} system of equations.
\newblock {\em SIAM J. Numer. Anal.}, 53:127--152, 2015.

\bibitem{Diegel2019}
A.~Diegel and S.W. Walker.
\newblock A finite element method for a phase field model of nematic liquid
  crystal droplets.
\newblock {\em Commun. Comput. Phys.}, 25:155--188, 2019.

\bibitem{diegel17}
A.~Diegel, C.~Wang, X.~Wang, and S.~Wise.
\newblock Convergence analysis and error estimates for a second order accurate
  finite element method for the {Cahn-Hilliard-Navier-Stokes} system.
\newblock {\em Numer. Math.}, 137:495--534, 2017.

\bibitem{Dong2016Convergence}
L.~Dong, W.~Feng, C.~Wang, S.~Wise, and Z.~Zhang.
\newblock Convergence analysis and numerical implementation of a second order
  numerical scheme for the three-dimensional phase field crystal equation.
\newblock {\em Comput. Math. Appl.}, 75:1912--1928, 2018.

\bibitem{Dong2019a}
L.~Dong, C.~Wang, H.~Zhang, and Z.~Zhang.
\newblock A positivity-preserving, energy stable and convergent numerical
  scheme for the {Cahn-Hilliard} equation with a {Flory-Huggins-deGennes}
  energy.
\newblock {\em Commun. Math. Sci.}, 2019.
\newblock Accepted and in press.

\bibitem{DCZ}
Q.~Du, L.~Chen, and L.~Zhang.
\newblock Mathematical and numerical aspects of phase-field approach to
  critical nuclei morphology in solid.
\newblock {\em J. Sci. Comput.}, 37:890102, 2008.

\bibitem{eyre98}
D.~Eyre.
\newblock Unconditionally gradient stable time marching the {C}ahn-{H}illiard
  equation.
\newblock In J.~W. Bullard, R.~Kalia, M.~Stoneham, and L.Q. Chen, editors, {\em
  Computational and Mathematical Models of Microstructural Evolution},
  volume~53, pages 1686--1712, Warrendale, PA, USA, 1998. Materials Research
  Society.

\bibitem{feng2016fch}
W.~Feng, Z.~Guan, J.~Lowengrub, S.~Wise, and C.~Wang.
\newblock A uniquely solvable, energy stable numerical scheme for the
  functionalized cahn-hilliard equation and its convergence analysis.
\newblock {\em J. Sci. Comput.}, 76:1938--1967, 2018.

\bibitem{feng2016bsam}
W.~Feng, Z.~Guo, J.~Lowengrub, and S.~Wise.
\newblock A mass-conservative adaptive fas multigrid solver for cell-centered
  finite difference methods on block-structured,locally-cartesian grids.
\newblock {\em J. Comput. Phys.}, 352:463--497, 2018.

\bibitem{feng2016preconditioned}
W.~Feng, A.~Salgado, C.~Wang, and S.~Wise.
\newblock Preconditioned steepest descent methods for some nonlinear elliptic
  equations involving p-{Laplacian} terms.
\newblock {\em J. Comput. Phys.}, 334:45--67, 2016.

\bibitem{Feng2017A}
W.~Feng, C.~Wang, S.~Wise, and Z.~Zhang.
\newblock A second-order energy stable backward differentiation formula method
  for the epitaxial thin film equation with slope selection.
\newblock {\em Numer. Meth. Part. D. E.}, 34:1975--2007, 2018.

\bibitem{DFY}
W.~Feng, P.~Yu, S.~Hu, Z.~Liu, Q.~Du, and L.~Chen.
\newblock A fourier spectral moving mesh method for the cahn-hilliard equation
  with elasticity.
\newblock {\em Commun. Comput. Phys.}, 5:582--599, 2009.

\bibitem{FPJ1953}
P.~Flory.
\newblock {\em Principles of Ploymer Chemistry}.
\newblock Cornell University Press, New York, 1953.

\bibitem{guo16}
J.~Guo, C.~Wang, and S.~Wise.
\newblock An {$H^2$} convergence of a second-order convex-splitting, finite
  difference scheme for the three-dimensional {Cahn-Hilliard} equation.
\newblock {\em Commu. Math. Sci.}, 14:489--515, 2016.

\bibitem{han15}
D.~Han and X.~Wang.
\newblock A second order in time, uniquely solvable, unconditionally stable
  numerical scheme for {Cahn-Hilliard-Navier-Stokes} equation.
\newblock {\em J. Comput. Phys.}, 290:139--156, 2015.

\bibitem{hu09}
Z.~Hu, S.~Wise, C.~Wang, and J.~Lowengrub.
\newblock Stable and efficient finite-difference nonlinear-multigrid schemes
  for the phase-field crystal equation.
\newblock {\em J. Comput. Phys.}, 228:5323--5339, 2009.

\bibitem{Huang2010A}
T.~Huang, H.~Xu, K.~Jiao, L.~Zhu, H.~Brown, and H.~Wang.
\newblock A novel hydrogel with high mechanical strength: A macromolecular
  microsphere composite hydrogel.
\newblock {\em Adv. Mater.}, 19:1622--1626, 2010.

\bibitem{Li2016An}
X.~Li, Z.~Qiao, and H.~Zhang.
\newblock An unconditionally energy stable finite difference scheme for a
  stochastic {Cahn-Hilliard} equation.
\newblock {\em Sci. China. Math.}, 59:1815--1834, 2016.

\bibitem{Xiao2017A}
X.~Li, Z.~Qiao, and H.~Zhang.
\newblock A second-order convex splitting scheme for a cahn-hilliard equation
  with variable interfacial parameters.
\newblock {\em J. Comput. Math.}, 35:693--710, 2017.

\bibitem{liuY17}
Y.~Liu, W.~Chen, C.~Wang, and S.~Wise.
\newblock Error analysis of a mixed finite element method for a
  {Cahn-Hilliard-Hele-Shaw} system.
\newblock {\em Numer. Math.}, 135:679--709, 2017.

\bibitem{Lv2018}
J.~Lv, G.~Yuan, and J.~Yue.
\newblock Nonnegativity-preserving repair techniques for the finite element
  solutions of degenerate nonlinear parabolic problems.
\newblock {\em Numer. Math. Theor. Meth. Appl.}, 11:413--436, 2018.

\bibitem{shen12}
J.~Shen, C.~Wang, X.~Wang, and S.~Wise.
\newblock Second-order convex splitting schemes for gradient flows with
  {Ehrlich-Schwoebel} type energy: application to thin film epitaxy.
\newblock {\em SIAM J. Numer. Anal.}, 50:105--125, 2012.

\bibitem{SX}
J.~Shen and J.~Xu.
\newblock Convergence and error analysis for the scalar auxiliary variable
  {(SAV)} schemes to gradient flows.
\newblock {\em SIAM J. Numer. Anal.}, 56:2895--2912, 2018.

\bibitem{SXY}
J.~Shen, J.~Xu, and J.~Yang.
\newblock The scalar auxiliary variable (sav) approach for gradient flows.
\newblock {\em J. Comput. Phys.}, 353:407--416, 2018.

\bibitem{SY1}
J.~Shen and X.~Yang.
\newblock An efficient moving mesh spectral method for the phase-field model of
  two-phase flows.
\newblock {\em J. Comput. Phys.}, 228:2978--2992, 2009.

\bibitem{SY2}
J.~Shen and X.~Yang.
\newblock Energy stable schemes for {Cahn-Hilliard} phase-field model of
  two-phase incompressible flows.
\newblock {\em Chin. Ann. Math. Ser. B}, 31:743--758, 2010.

\bibitem{SY3}
J.~Shen and X.~Yang.
\newblock Numerical approximations of {Allen-Cahn} and {Cahn-Hilliard}
  equations.
\newblock {\em Disc. Cont. Dyn. Sys. A}, 28:1669--1691, 2010.

\bibitem{SY5}
J.~Shen and X.~Yang.
\newblock Decoupled energy stable schemes for phase filed models of two phase
  complex fluids.
\newblock {\em SIAM J. Sci. Comput.}, 36:122--145, 2014.

\bibitem{wang10a}
C.~Wang, X.~Wang, and S.~Wise.
\newblock Unconditionally stable schemes for equations of thin film epitaxy.
\newblock {\em Disc. Cont. Dyn. Sys. A}, 28:405--423, 2010.

\bibitem{wang10c}
C.~Wang and S.~Wise.
\newblock Global smooth solutions of the modified phase field crystal equation.
\newblock {\em Methods Appl. Anal.}, 17:191--212, 2010.

\bibitem{wang11a}
C.~Wang and S.~Wise.
\newblock An energy stable and convergent finite-difference scheme for the
  modified phase field crystal equation.
\newblock {\em SIAM J. Numer. Anal.}, 49:945--969, 2011.

\bibitem{wise10}
S.~Wise.
\newblock Unconditionally stable finite difference, nonlinear multigrid
  simulation of the {Cahn-Hilliard-Hele-Shaw} system of equations.
\newblock {\em J. Sci. Comput.}, 44:38--68, 2010.

\bibitem{Wise2009AN}
S.~Wise, C.~Wang, and J.~Lowengrub.
\newblock An energy-stable and convergent finite-difference scheme for the
  phase field crystal equation.
\newblock {\em SIAM J. Numer. Anal.}, 47:2269--2288, 2009.

\bibitem{CHBDF2}
Y.~Yan, W.~Chen, C.~Wang, and S.~Wise.
\newblock A second-order energy stable bdf numerical scheme for the
  cahn-hilliard equation.
\newblock {\em Commun. Comput. Phys.}, 23:572--602, 2018.

\bibitem{SY11}
X.~Yang, J.~Feng, C.~Liu, and J.~Shen.
\newblock Numerical simulations of jet pinching-off and drop formation using an
  energetic variational phase-field method.
\newblock {\em J. Comput. Phys.}, 218:417--428, 2006.

\bibitem{SY12}
X.~Yang, M.~Forest, H.~Li, C.~Liu, J.~Shen, Q.~Wang, and F.~Chen.
\newblock Modeling and simulations of drop pinch-off from liquid crystal
  filaments and the leaky liquid crystal faucet immersed in viscous fluids.
\newblock {\em J. Comp. Phys.}, 236:1--14, 2013.

\bibitem{Yang2019On}
X.~Yang and J.~Zhao.
\newblock On linear and unconditionally energy stable algorithms for variable
  mobility {C}ahn-{H}illiard type equation with logarithmic {F}lory-{H}uggins
  potential.
\newblock {\em Commun. Comput. Phys.}, 25:703--728, 2019.

\bibitem{Zhai2012Investigation}
D.~Zhai and H.~Zhang.
\newblock Investigation on the application of the tdgl equation in
  macromolecular microsphere composite hydrogel.
\newblock {\em Soft Matter}, 9:820--825, 2012.

\bibitem{zlz}
W.~Zhang, T.~Li, and P.~Zhang.
\newblock Numerical study for the nucleation of one-dimensional stochastic
  {Cahn-Hilliard} dynamics.
\newblock {\em Commun. Math. Sci.}, 10:1105--1132, 2012.

\bibitem{ZWY}
J.~Zhao, Q.~Wang, and X.~Yang.
\newblock Numerical approximations to a new phase field model for two phase
  flows of complex fluids.
\newblock {\em Comput. Method. Appl. M.}, 310:77--97, 2016.

\bibitem{ZYW}
J.~Zhao, X.~Yang, J.~Li, and Q.~Wang.
\newblock Energy stable numerical schemes for a hydrodynamic model of nematic
  liquid crystals.
\newblock {\em SIAM J. Sci. Comput.}, 38:3264--3290, 2016.

\end{thebibliography}

\end{document}